\numberwithin{equation}{section}
\newtheorem{thm}[equation]{Theorem}
\newtheorem*{thm*}{Theorem}
\newtheorem{cor}[equation]{Corollary}
\newtheorem{lem}[equation]{Lemma}
\newtheorem{prop}[equation]{Proposition}
\theoremstyle{definition}
\newtheorem{defi}[equation]{Definition}
\newtheorem{rem}[equation]{Remark}
\newtheorem{exa}[equation]{Example}
\newtheorem*{exa*}{Example}
\def\on{\operatorname}
\def\op{{\on{op}}}
\def\id{\on{id}}
\def\im{\on{im}}
\def\Fun{\on{Fun}}
\def\lim{\on*{lim}}
\def\tot{\on*{tot}}
\def\Hom{\on{Hom}}
\def\Map{\on{Map}}
\def\N{\on{N}}
\def\Nsc{\on{N^{\on{sc}}}}
\def\P{\on{P}}
\def\L{\on{L}}
\def\hra{\hookrightarrow}
\def\lra{\longrightarrow}
\def\lla{\longleftarrow}
\def\llra{\longleftrightarrow}
\def\CC{\mathbb{C}} 
\def\DD{\mathbb{D}} 
\def\ZZ{\mathbb{Z}}
\def\CChi{\mathbb{\bbchi}}
\DeclareMathSymbol\DDelta\mathord{bbold}{"01}
\DeclareMathSymbol\GGamma\mathord{bbold}{"00}
\DeclareMathSymbol\SSigma\mathord{bbold}{'117}
\def\Nc{\EuScript{N}}
\def\C{\on{C}}
\def\Cc{\EuScript{C}}
\def\Fc{\EuScript{F}}
\def\Pc{\EuScript{P}}
\def\Ab{\on{Ab}}
\def\Cat{\on{Cat}}
\def\CCat{\on{\mathbb{C}at}}
\def\Grp{\on{Grp}}
\def\Fib{\on{Fib}}
\def\St{{\EuScript S}t}
\def\Set{{\on{Set}}}
\def\sSet{ {\Set_{\Delta}}}
\def\msSet{ {\Set^{+}_{\Delta}}}
\def\Ch{\on{Ch}}
\def\NN{{\mathbb{N}}}
\def\A{{\EuScript A}}
\def\X{{\EuScript X}}
\def\Y{{\EuScript Y}}
\def\B{{\EuScript B}}
\def\M{{\EuScript M}}
\def\M{ {\EuScript M}}
\setlist[enumerate,1]{label=(\arabic{*})}
\setlist[enumerate,2]{label=(\alph{*})}
\setlist[enumerate,3]{label=(\roman{*})}
\title{A categorified Dold-Kan correspondence} 
\author{Tobias Dyckerhoff \footnote{Hausdorff Center for Mathematics, 
				Endenicher Allee 62, 
				53115 Bonn, 
				Germany, 
				email: {\tt dyckerho@math.uni-bonn.de}}}
\begin{document}

\maketitle

\begin{abstract}
	In this work, we establish a categorification of the classical Dold-Kan correspondence in
	the form of an equivalence between suitably defined $\infty$-categories of simplicial stable
	$\infty$-categories and connective chain complexes of stable $\infty$-categories. The result
	may be regarded as a contribution to the foundations of an emerging subject that could be
	termed categorified homological algebra.
\end{abstract}

\tableofcontents

\section{Introduction}

A central tool in classical homological algebra is the construction of a chain complex from a
simplicial abelian group via the formula
\[
	d = \sum_{i=0}^n (-1)^i d_i.
\]
The fact that a large number of interesting complexes arise via this procedure is not a coincidence
-- the classical Dold-Kan correspondence \cite{dold:homology,kan:functors} states that the passage
to normalized chains establishes an equivalence of categories:

\begin{thm*}[Dold,Kan] The normalized chains functor $\C$ provides an equivalence 
\[
	\C: \Ab_{\Delta} \overset{\simeq}{\llra} \Ch_{\ge 0}(\Ab): \N
\]
between the category $\Ab_{\Delta}$ of simplicial abelian groups and the category $\Ch_{\ge 0}(\Ab)$
of connective chain complexes with inverse given explicitly by the Dold-Kan nerve $\N$.
\end{thm*}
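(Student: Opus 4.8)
The plan is to exhibit both functors explicitly, check that they take values in the asserted categories, and then construct natural isomorphisms $\C \circ \N \simeq \id_{\Ch_{\ge 0}(\Ab)}$ and $\N \circ \C \simeq \id_{\Ab_{\Delta}}$; the second of these carries all the weight, the first being essentially a direct computation.

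First I would set up the two constructions. For a simplicial abelian group $A$, put $\C_n(A) = \bigcap_{i=1}^n \Ker\bigl(d_i \colon A_n \to A_{n-1}\bigr)$ with differential induced by $d_0$; the simplicial identity $d_i d_0 = d_0 d_{i+1}$ shows that $d_0$ preserves these subgroups and (taking $i = 0$) that $d_0 d_0 = d_0 d_1 = 0$ on $\C(A)$, so $\C(A)$ is a connective chain complex, functorially in $A$. Dually, for a connective chain complex $C$ define the Dold-Kan nerve by $\N(C)_n = \bigoplus_{\eta} C_k$, the sum taken over all surjections $\eta \colon [n] \twoheadrightarrow [k]$ in $\Delta$; the operator attached to $\alpha \colon [m] \to [n]$ sends the $\eta$-summand to the $\eta'$-summand, where $\eta \circ \alpha = \mu \circ \eta'$ is the epi-mono factorization in $\Delta$ (with $\eta' \colon [m] \twoheadrightarrow [\ell]$ and $\mu \colon [\ell] \hookrightarrow [k]$), via the map $C_k \to C_\ell$ equal to $\id$ if $\mu = \id$, to $(-1)^k \partial$ if $\mu$ is the top coface $[k-1] \hookrightarrow [k]$, and to $0$ otherwise. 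That $\N(C)$ satisfies the simplicial identities is a bookkeeping exercise with epi-mono factorizations in $\Delta$, and functoriality in $C$ is clear.

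Next I would prove $\C \circ \N \simeq \id$: unwinding definitions, $\C_n(\N C) = \bigcap_{i=1}^n \Ker(d_i)$ inside $\bigoplus_\eta C_k$, and the factorization rule shows that the composite of this inclusion with the projection onto the summand indexed by $\id_{[n]}$ is an isomorphism onto $C_n$ intertwining $d_0$ with $\pm\partial$. For the other direction I would first isolate the combinatorial heart of the matter, the additive Eilenberg–Zilber decomposition: for every simplicial abelian group $A$ the map
\[
	\bigoplus_{\eta \colon [n] \twoheadrightarrow [k]} \C_k(A) \lra A_n,
	\qquad (x_\eta)_\eta \longmapsto \sum_\eta \eta^*(x_\eta),
\]
is an isomorphism, where $\eta^*$ denotes the iterated degeneracy operator induced by $\eta$. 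I would prove this by induction on $n$: the splitting $A_n = \C_n(A) \oplus D_n(A)$ into the normalized part and the degenerate part $D_n(A) = \sum_j s_j(A_{n-1})$ — produced by an explicit idempotent built from the operators $s_j d_j$ — accounts for the summand $\eta = \id_{[n]}$, while the simplicial identities governing composites of degeneracies, together with the inductive hypothesis applied in degrees $< n$, identify $D_n(A)$ with $\bigoplus_{\eta \ne \id} \C_k(A)$.

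Granting the decomposition, its right-hand sides assemble, as $n$ varies, into the simplicial abelian group $\N(\C A)$, and it remains to check that the isomorphisms are simplicial. Since every element of $A_n$ is a sum of terms $\eta^*(x)$ with $x \in \C_k(A)$, this reduces to the identity $\alpha^* \circ \eta^* = (\eta')^* \circ \mu^*$ coming from $\eta\alpha = \mu\eta'$, together with the observation that $\mu^*$ restricted to $\C_k(A)$ is $\id$, $\pm\partial$, or $0$ precisely in the three cases of the nerve's defining rule — which is immediate from $\C_k(A) \subseteq \bigcap_{i \ge 1}\Ker d_i$. Naturality of both isomorphisms in $A$ and in $C$ is visible from the constructions, so $\C$ and $\N$ are mutually inverse equivalences. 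I expect the main obstacle to be the Eilenberg–Zilber decomposition together with the verification that it is compatible with faces and degeneracies; everything else is formal manipulation of the simplicial identities.
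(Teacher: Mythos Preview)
Your argument is sound (modulo a terminological slip: it is the coface $\partial_0$, not the top one, whose pullback survives on $\C_k(A)$, and the sign $(-1)^k$ does not appear in the paper's convention where the differential is plain $d_0$), but it follows a genuinely different route from the paper. You model the nerve by the direct-sum-over-surjections formula $\N(C)_n = \bigoplus_{\eta\colon [n]\twoheadrightarrow[k]} C_k$ and establish $\N\circ\C \simeq \id$ via the explicit Eilenberg--Zilber splitting $A_n \cong \bigoplus_\eta \C_k(A)$, proved by induction using the idempotent projector onto the normalized part. The paper instead realises $\N$ as the right adjoint of $\C$, describing $\N(B)_n$ as collections $\{b_\sigma\}$ indexed by \emph{injections} $\sigma\colon[m]\hookrightarrow[n]$ subject to $db_\sigma = \sum_i(-1)^i b_{\sigma\partial_i}$; the counit is then an isomorphism by inspection, while for the unit $u$ the paper only checks that $\C(u)$ is an isomorphism and concludes via a separate conservativity lemma for $\C$ (Proposition~\ref{prop:conservative}), proved by an inductive path/loop-space argument. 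Your approach is more self-contained and yields the splitting of each $A_n$ outright; the paper's approach trades this explicitness for modularity that pays off later --- the conservativity step is precisely what categorifies to stable $\infty$-categories in \S\ref{sec:conservative}, whereas a direct-sum decomposition of $A_n$ has no evident categorified analogue.
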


In this work, we establish a categorification of the classical Dold-Kan correspondence where the
category $\Ab$ of abelian groups gets replaced by the $(\infty,2)$-category $\St$ of stable
$\infty$-categories. 

\begin{thm*} The categorified normalized chains functor $\Cc$ furnishes an equivalence 
\[
	\Cc: \St_{\DDelta} \overset{\simeq}{\llra} \Ch_{\ge 0}(\St): \Nc
\]
between the $\infty$-category $\St_{\DDelta}$ of $2$-simplicial stable $\infty$-categories and the
$\infty$-category $\Ch_{\ge 0}(\St)$ of connective chain complexes of stable $\infty$-categories
with explicit inverse given by the categorified Dold-Kan nerve $\Nc$.
\end{thm*}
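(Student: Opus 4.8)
The plan is to run the classical, purely categorical proof of Dold--Kan in the $(\infty,2)$-categorical setting. The structural fact that makes this possible is that, although $\St$ is not a stable $\infty$-category, it is additive in a strong sense: its mapping $\infty$-categories are stable, finite products and coproducts in $\St$ coincide, $\St$ is idempotent complete, and every fibre sequence arising below will in fact be split, hence simultaneously a cofibre sequence. This last point is the $(\infty,2)$-analogue of the idempotent-completeness hypothesis in the ordinary Dold--Kan theorem, and it is what separates the present situation from that of a stable $\infty$-category, in which simplicial objects correspond to filtered rather than to chain objects.

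I would first record the two functors explicitly. For a $2$-simplicial stable $\infty$-category $X$, the categorified normalized chains $\Cc(X)$ is the connective chain complex whose $n$-th term is the joint fibre $\bigcap_{i=0}^{n-1}\fib\bigl(d_i\colon X_n\to X_{n-1}\bigr)$ -- the total fibre of the face functors $d_0,\dots,d_{n-1}$ -- with differential induced by $d_n$; that this is an object of $\Ch_{\ge 0}(\St)$, i.e.\ that successive differentials compose to a coherently null functor together with all higher coherences, is forced by the simplicial identities $d_i d_n \simeq d_{n-1}d_i$. In the other direction $\Nc(C)_n=\bigoplus_{\eta\colon[n]\twoheadrightarrow[k]}C_k$, with the $\DDelta$-action given summand-wise by the classical Dold--Kan recipe: given a morphism, epi--mono factorize the induced composite of surjections and act by the identity, by the differential $d$, or by $0$ accordingly. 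The coherent $d^2\simeq 0$ datum of $C$ is precisely what promotes this recipe to an honest functor on $\DDelta$, supplying all higher compatibilities.

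Next I would show that the two composites are the identity. For $\Cc\circ\Nc\simeq\id$ one computes directly, as in the classical case: the faces $d_0,\dots,d_{n-1}$ act on $\Nc(C)_n=\bigoplus_\eta C_k$ by reindexing the surjections and applying $d$ on certain summands, all projections between these finite biproducts are split (the degeneracies provide sections), and so the joint fibre defining $\Cc(\Nc(C))_n$ splits off exactly the summand indexed by $\id_{[n]}$, namely $C_n$, with residual differential $d$ and no higher corrections. For $\Nc\circ\Cc\simeq\id$ one exhibits the counit-type comparison $\Nc(\Cc(X))\to X$ and shows it is a levelwise equivalence by induction on $n$: one proves that $X_n$ splits as a biproduct of the top normalized piece $\Cc(X)_n$ and a degenerate part -- the splitting assembled from face and degeneracy functors -- and identifies the degenerate part, via the inductive hypothesis, with $\bigoplus_{[n]\twoheadrightarrow[k],\,k<n}(\text{copies of }\Cc(X)_k)$; altogether this yields $X_n\simeq\bigoplus_{[n]\twoheadrightarrow[k]}\Cc(X)_k=\Nc(\Cc(X))_n$, compatibly with the $\DDelta$-action.

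The main obstacle is this last step, and it is of two kinds. First, coherence bookkeeping: the biproduct splitting of $X_n$ must be produced not objectwise but coherently in $n$ and compatibly with all of $\DDelta$, so that the levelwise equivalences assemble to an equivalence of $2$-simplicial objects and ultimately to an equivalence of $\infty$-categories; the most economical route is to build $\Cc$ and $\Nc$ from the outset via restriction and Kan extension along explicit functors, so that these compatibilities become structural rather than something verified by hand. Second, and more fundamentally, one must check that the fibre sequences produced by the degeneracy (latching) filtration of a $2$-simplicial stable $\infty$-category are genuinely split -- equivalently, are also cofibre sequences in $\St$ -- since this is what licenses the biproduct decompositions above; this, together with controlling the combinatorics of $\DDelta$, is the technical heart of the theorem.
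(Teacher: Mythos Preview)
There is a genuine gap: the direct-sum formula $\Nc(C)_n=\bigoplus_{[n]\twoheadrightarrow[k]}C_k$ is \emph{not} the correct categorified Dold--Kan nerve, and the source of the error is exactly the point you flag as the technical heart. You write that the relevant fibre sequences ``will in fact be split'' and then gloss ``split'' as ``are also cofibre sequences in $\St$''. These are different conditions. The degeneracy filtration of a $2$-simplicial stable $\infty$-category does produce sequences that are simultaneously fibre and cofibre sequences in $\St$, but what one obtains are \emph{semiorthogonal decompositions}, not biproduct splittings: $\A_n$ is recovered from its pieces together with nontrivial gluing functors, not as their product. Concretely, for $\B[1]$ the complex with $\B$ in degree $1$, the paper's nerve gives Waldhausen's $S_\bullet$-construction, so $\Nc(\B[1])_2\simeq\Fun(\Delta^1,\B)$; your formula gives $\B\times\B$, and these are not equivalent. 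Already at level $1$ the correct $\Nc(\B_\bullet)_1$ is the oplax fibre of $d\colon\B_1\to\B_0$ (pairs $A_1\in\B_0$, $A_{01}\in\B_1$, together with a map $A_1\to d(A_{01})$), not the product $\B_0\times\B_1$. The paper makes this point explicitly in the remark following the statement of the main theorem: the categorified analogue of the idempotent splitting $A_n\cong\overline{A}_n\oplus D_n$ is a semiorthogonal decomposition, and the proof is organised around that structure rather than biproducts.

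Consequently the paper's argument looks rather different from yours. The nerve $\Nc(\B_\bullet)_n$ is defined as a full subcategory of diagrams $\N(\NN_{/[n]})\to\chi(\B_\bullet)$ in the Grothendieck construction, singled out by zero and biCartesian-cube conditions; this packages the gluing data automatically. The equivalence $\Cc\circ\Nc\simeq\id$ is then a short computation, but $\Nc\circ\Cc\simeq\id$ is \emph{not} obtained by exhibiting a levelwise biproduct decomposition of $\A_n$. Instead the paper produces a zigzag of weak equivalences $\id\to\GGamma\circ\CChi\leftarrow\Fc\to\widetilde{\Nc}\circ\Cc\leftarrow\Nc\circ\Cc$ through an auxiliary functor $\Fc$ built from the lax $(\infty,2)$-Grothendieck construction, and closes the argument by proving that $\Cc$ is \emph{conservative}; the latter is where the adjunction chain $d_0\dashv s_0\dashv d_1\dashv\cdots$ coming from $2$-functoriality, and the resulting semiorthogonal decompositions, actually enter (Lemma~\ref{lem:semiglue}). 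If you want to salvage the spirit of your approach, the fix is to replace the biproducts by iterated semiorthogonal gluings and to define $\Nc$ so that the gluing functors are visible from the start---which is effectively what the paper's diagram-category definition accomplishes.
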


We refer the reader to \S 3.1 and \S 3.2 for an explication of the terminology and a precise
statement of the theorem. A key ingredient of the proof is an explicit construction of the {\em
categorified Dold-Kan nerve} $\Nc$. Its classical counterpart $\N$ associates to a chain complex
$B_{\bullet}$ of abelian groups the simplicial abelian group $\N(B_{\bullet})$ which can be
described as follows: the group of $n$-simplices is given by collections $\{x_{\sigma}\}$,
parametrized by monotone maps $\sigma: [k] \hra [n]$, of elements $x_{\sigma} \in B_k$
subject to the equations
\[
	d(x_{\sigma}) = \sum_{i=0}^k (-1)^i x_{\sigma \circ \partial_i}.
\]

\begin{exa*}
A $2$-simplex in $\N(B_{\bullet})$ consists of 
\begin{itemize}
	\item elements $x_{012} \in B_2$, $x_{01},x_{02},x_{12}
\in B_1$, and $x_0,x_1,x_2 \in B_0$, 
	\item satisfying the equations 
		\begin{enumerate}
			\item $d(x_{012}) = x_{12} - x_{02} + x_{01}$,
			\item $d(x_{ij}) = x_{j} - x_{i}$.
		\end{enumerate}
\end{itemize}
As a foretaste, we provide an informal description of the data comprising a $2$-simplex in the
categorified Dold-Kan nerve $\Nc(\B_{\bullet})$ associated to a complex 
\[
	\B_0 \overset{d}{\lla} \B_1 \overset{d}{\lla}\B_2 \overset{d}{\lla} \cdots
\]
of stable $\infty$-categories:
\begin{itemize}
	\item objects $X_{012} \in \B_2$, $X_{01},X_{02},X_{12} \in \B_1$, and $X_0,X_1,X_2 \in
		\B_0$, 
	\item a chain of morphisms $X_{0} \to X_{1} \to X_{2}$ in $\B_0$, 
	\item a $3$-term complex 
		\[
		\begin{tikzcd}  
			X_{01} \arrow{r}\arrow{d}& X_{02} \arrow{d}\\
		    	0 \arrow{r}& X_{12}
		\end{tikzcd}
		\]
		in $\B_1$, 
	\item together with coherent data that exhibits 
		\begin{enumerate}
			\item $d(X_{012})$ as the totalization of the complex $X_{01} \to X_{02} \to
				X_{12}$,
			\item $d(X_{ij})$ as the cofiber of $X_i \to X_j$.
		\end{enumerate}
\end{itemize}
Note that the data of a $2$-simplex in $\Nc(\B_{\bullet})$ defines, upon passage to classes in the
respective Grothendieck groups, a $2$-simplex in $\N(K_0(\B_{\bullet}))$. This observation
generalizes to simplices in all dimensions and is a justification for the use of the term {\em
categorification}.
\end{exa*}

The categorified Dold-Kan nerve $\Nc$ unifies various known constructions from algebraic $K$-theory:
\begin{enumerate}[label=(\Roman{*})]
	\item Let $\B$ be a stable $\infty$-category and let $\B[1]$ denote the chain complex
		\[
			0 \lla \B \lla 0 \lla 0 \lla \dots
		\]
		concentrated in degree $1$. Then $\Nc(\B[1])$ is an $\infty$-categorical version
		of Waldhausen's $S_{\bullet}$-construction (cf. \cite{waldhausen}). 
		Waldhausen's $S_{\bullet}$-construction is usually considered as a simplicial object
		in the category $\Cat$ (or $\Cat_{\infty}$). While the additional $2$-functoriality present in our
		treatment does not seem to appear explicitly in the literature, it {\em does} feature implicitly,
		yet crucially, in Waldhausen's proof of the additivity theorem \cite{waldhausen} and its
		modification for stable $\infty$-categories presented in \cite{lurie:ktheory}. This observation
		will be explored in detail elsewhere. 

	\item Let $f:\B_1 \to \B_0$ be an exact functor of stable $\infty$-categories. Then applying
		$\Nc$ to the complex
		\[
			\B_0 \overset{f}{\lla} \B_1 \lla 0 \lla 0 \lla \dots
		\]
		concentrated in degrees $\{0,1\}$ yields an $\infty$-categorical version
		of Waldhausen's relative $S_{\bullet}$-construction. Besides its appearance
		in Waldhausen's own work, this construction also features prominently in
		\cite{dkss:schober} where it provides a local description of perverse schobers
		(cf. \cite{ks:schobers}).
	\item Let $\B$ be a stable $\infty$-category and let $\B[2]$ denote the chain complex
		\[
			0 \lla 0 \lla \B \lla 0 \lla \dots
		\]
		concentrated in degree $2$. Then $\Nc(\B[2])$ is an $\infty$-categorical version of
		Hesselholt-Madsen's $S^{2,1}_{\bullet}$-construction. As explained in
		\cite{hesselholt-madsen}, a duality on the category $\B$ furnishes $\Nc(\B[2])$
		with the structure of a real object which can then be utilized to upgrade the
		$K$-theory spectrum of $\B$ to a genuine $\ZZ/(2)$-equivariant spectrum. 

	\item\label{item:sk} Let $\B$ be a stable $\infty$-category and let $\B[k]$ denote the chain complex
		with $\B$ concentrated in degree $k$. Then $\Nc(\B[k])$ is an $\infty$-categorical version
		of the $k$-dimensional $S^{\langle k \rangle}_{\bullet}$-construction introduced for
		abelian categories in \cite{poguntke:higher}. These higher-dimensional Waldhausen
		constructions have an interesting interpretation in the context of higher algebraic
		$K$-theory: Let us denote by $\A^{\simeq}$ the Kan complex obtained from an
		$\infty$-category $\A$ by discarding noninvertible morphisms. Then, for
		every $k \ge 1$, there is a canonical weak equivalence of spaces
		\begin{equation}\label{eq:kdeloop}
				\Omega^k |\Nc(\B[k])^{\simeq}|  \simeq K(\B)
		\end{equation}
		which exhibits $|\Nc(\B[k])^{\simeq}|$ as a $k$-fold delooping of the $K$-theory
		space of $\B$. In fact, the sequence of spaces $\{|\Nc(\B[k])^{\simeq}|\}_{k \ge
		1}$ can be augmented to a spectrum which models the connective algebraic $K$-theory
		spectrum of $\B$. Interpreting the simplicial object $\Nc(\B([k]))$ as a categorification of
		the Eilenberg-MacLane space $\N(B[k])$, we observe that \eqref{eq:kdeloop} may be
		regarded as a categorification of the description of the Eilenberg-MacLane spectrum
		associated to an abelian group $B$ in terms of the sequence $\{\N(B[k])\}_{k \ge 1}$.
		A more detailed study of the $S^{\langle k \rangle}_{\bullet}$-constructions in the
		context of stable $\infty$-categories are the subject of an ongoing project with G.
		Jasso \cite{dj:sk} where we explore relations to higher Auslander-Reiten theory as
		introduced by Iyama \cite{iyama:higher}. 
\end{enumerate}

The $k$th cohomology group of a topological space $X$ with coefficients in an abelian group $B$ can
be described as homotopy classes of maps from $X$ to $K(B,k)$. The interpretation of the
$2$-simplicial stable $\infty$-category $\Nc(\B[k])$ as a categorified Eilenberg-MacLane space
predicts the existence of a categorified notion of cohomology. This circle of ideas will be explored in
future work with a view towards applications to topological Fukaya categories. For $k=1$, the
results of \cite{dk:triangulated, d:a1homotopy} can be interpreted as describing the topological
Fukaya category of a marked Riemann surface as categorified a relative $1$st cohomology group.\\

\noindent
{\bf Acknowledgements.} It is a pleasure to thank Rune Haugseng, Gustavo Jasso, Dima Kaledin,
Mikhail Kapranov, Jacob Lurie, Thomas Nikolaus, Thomas Poguntke, Vadim Schechtman, Ed Segal, and
Nicolo Sibilla for inspiring conversations that helped shape my perspective on the material
presented in this work.  Specifically, I would like to thank Mikhail Kapranov and Vadim Schechtman
for many discussions on perverse schobers, which are one of the sources of inspiration for this
work, and I am indebted to Thomas Nikolaus for conversations which convinced me to focus on
$2$-categorical methods.

\section{The classical Dold-Kan correspondence} \label{sec:dk}

Let $\Ab$ denote the category of abelian groups. The Dold-Kan correspondence establishes an
equivalence between the category $\Ab_{\Delta}$ of simplicial abelian groups and the category
$\Ch_{\ge 0}(\Ab)$ of connective chain complexes of abelian groups. We present a particular proof of
this result which is designed so that the proof of our main result, provided in \S
\ref{sec:cdk}, can be regarded as a step-by-step categorification of the involved arguments.

Let $A_{\bullet}$ be a simplicial abelian group. The associated chain complex $(A_{\bullet},d)$ with 
differential $d = \sum_{i = 0}^n (-1)^i d_i$ admits two subcomplexes $(\overline{A}_{\bullet},d)$
and $(D_{\bullet},d)$ where, for $n \ge 0$, we have
\[
	\overline{A}_n := \bigcap_{i = 1}^n \ker(d_i) \subset A_n,
\]
and $D_n \subset A_n$ is the subgroup generated by the degenerate $n$-simplices. 
For each element $\underline{j} = (j_1,\dots,j_n)$ of the cube $\{0,1\}^n$, we consider
\[
	f_{\underline{j}}:\; [n] \to [n], \; i \mapsto i - 1 + j_i,
\]
setting $j_0 = 1$, and introduce the map
\begin{equation}\label{eq:pi}
		\pi: A_n \to A_n, \; a \mapsto \sum_{\underline{j} \in \{0,1\}^n} (-1)^{n - |\underline{j}|}
		f_{\underline{j}}^* a
\end{equation}
where $|\underline{j}| = \sum_i j_i$.

\begin{prop}\label{prop:pi} Let $n \ge 0$, and let $D_n \subset A_n$ be the subgroup generated by the degenerate
	simplices. Then the map $\pi: A_n \to A_n$ is a retraction onto $\overline{A}_n$ with kernel
	$D_n$. In particular, it induces an isomorphism
	\[
		A_n / D_n \cong \overline{A}_n.
	\]
\end{prop}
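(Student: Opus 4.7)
The argument has four ingredients which correspond to the four assertions that $\pi$ (i) lands in $\overline{A}_n$, (ii) restricts to the identity on $\overline{A}_n$, (iii) vanishes on $D_n$, and (iv) has kernel exactly $D_n$. The identification of $A_n/D_n$ with $\overline{A}_n$ is an immediate consequence. The plan is to prove (i) and (iii) by a sign-cancellation argument based on pairing terms in the defining sum, and to prove (ii) and (iv) by analyzing when $f_{\underline{j}}$ is (non-)injective or (non-)surjective.

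First I would record the elementary combinatorial fact that underpins everything. Fix $i\in\{1,\dots,n\}$. Since $f_{\underline{j}}(k)$ depends only on $j_k$, the composite $f_{\underline{j}}\circ \partial_i$ omits the coordinate $j_i$: writing out $\partial_i(k)=k$ for $k<i$ and $\partial_i(k)=k+1$ for $k\ge i$, one checks that $f_{\underline{j}}\circ\partial_i = f_{\underline{j}'}\circ\partial_i$ whenever $\underline{j}$ and $\underline{j}'$ agree outside coordinate $i$. Dually, for fixed $i\in\{0,\dots,n-1\}$, the composite $s_i\circ f_{\underline{j}}$ is insensitive to $j_{i+1}$ (the two possible values of $f_{\underline{j}}(i+1)$, namely $i$ and $i+1$, are identified by $s_i$). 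These are the two symmetries that drive the proof.

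For (i), apply $d_i=\partial_i^*$ to $\pi(a)$ and pair up each $\underline{j}$ with the vector $\underline{j}'$ obtained by flipping its $i$-th coordinate; by the first symmetry the pullbacks agree, while the signs $(-1)^{n-|\underline{j}|}$ and $(-1)^{n-|\underline{j}'|}$ are opposite, so the sum vanishes. For (ii), the only $\underline{j}\in\{0,1\}^n$ for which $f_{\underline{j}}:[n]\to[n]$ is a bijection is $\underline{j}=(1,\dots,1)$ (as a monotone self-map of $[n]$ fixing $0$ that is surjective must be the identity); every other $f_{\underline{j}}$ fails to hit some value $v\ge 1$ and hence factors through $\partial_v$, so $f_{\underline{j}}^*a=0$ whenever $a\in\overline{A}_n$. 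Only the term for $\underline{j}=(1,\dots,1)$ survives and contributes $a$. For (iii), the same cancellation trick as in (i), now pairing $\underline{j}\leftrightarrow\underline{j}'$ via the $(i+1)$-th coordinate and using the second symmetry, shows $\pi(s_i^* b) = 0$ for every $b\in A_{n-1}$, hence $D_n\subset \ker\pi$.

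Finally, for (iv), one observes that for every $\underline{j}\ne(1,\dots,1)$ the map $f_{\underline{j}}:[n]\to[n]$ is non-bijective, hence non-injective, so its canonical epi--mono factorization in $\DDelta$ has the form $f_{\underline{j}}=\iota\circ\sigma$ with $\sigma:[n]\twoheadrightarrow[m]$ a non-identity surjection. Consequently $f_{\underline{j}}^*a = \sigma^*(\iota^* a)$ is a degenerate simplex. From the formula for $\pi$ one reads off
\[
a - \pi(a) \;=\; -\sum_{\underline{j}\ne(1,\dots,1)} (-1)^{n-|\underline{j}|} f_{\underline{j}}^* a \;\in\; D_n,
\]
so any $a\in\ker\pi$ equals $a-\pi(a)\in D_n$, giving $\ker\pi=D_n$. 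Combining (i)--(iv) yields that $\pi$ is a retraction onto $\overline{A}_n$ with kernel $D_n$, and the stated isomorphism $A_n/D_n\cong\overline{A}_n$ follows. The main obstacle is purely combinatorial: spotting the two correct pairings of the cube $\{0,1\}^n$ for the cancellation arguments in (i) and (iii); once these are in hand, the remaining assertions are bookkeeping in $\DDelta$.
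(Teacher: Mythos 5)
Your proof is correct and follows essentially the same route as the paper: the cancellation argument via pairing cube vertices $\underline{j}\leftrightarrow\underline{j}'$ flipped in one coordinate is exactly the paper's observation that opposing faces of the cube $\{f_{\underline{j}}^*a\}$ map to the same thing under $d_i$ (resp. come from the same thing under $s_i$) with opposite signs, the retraction property is established identically via the factorization through $\partial_v$, and the identification of $\ker\pi$ with $D_n$ via $a-\pi(a)\in D_n$ is the paper's assertion that the formula gives $A_n=\overline{A}_n+D_n$. You have simply spelled out the two combinatorial symmetries and the epi--mono factorization more explicitly than the paper does.
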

\begin{proof}
	To show that $\im(\pi) \subset \overline{A}_n$, we observe that, for $0 < i \le n$, the face
	map $d_i$ maps the two faces of the cube $\{f_{\underline{j}}^*a\}$ that are orthogonal to the $i$th
	coordinate direction to the same $(n-1)$-dimensional cubes in $A_{n-1}$. Since the
	contributions of these faces in the formula for $\pi$ appear with opposite signs, we obtain,
	for every $a \in A_n$, $d_i \pi(a) = 0$.
	A similar argument shows that $D_n \subset \ker(\pi)$: for $a = s_i a'$, we have $\pi(a) =
	0$ since the opposing faces orthogonal to the $i$th coordinate direction of the cube 
	$\{f_{\underline{j}}^*a\}$ cancel in formula \eqref{eq:pi}.
	We show that $\pi$ is a retraction. Let $a \in \overline{A}_n$. For $\underline{j} \ne
	(1,\dots,1)$, the map $f_{\underline{j}}$ factors through some face map $\partial_i: [n-1] \to
	[n]$, $i >0$, so that $f_{\underline{j}}^*a = 0$. Since $f_{(1,\dots,1)} = \id$, we have
	$\pi(a) = a$ so that $\pi$ is a retraction.
	Finally, formula \eqref{eq:pi} implies that $A_n = \overline{A}_n + D_n$ which in
	combination with the statements established above yields $A_n = \overline{A}_n \oplus D_n$
	so that $D_n = \ker(\pi)$.
\end{proof}

\begin{cor} We have an isomorphism of complexes
	\[
		(A_{\bullet}, d) \cong  (\overline{A}_{\bullet}, d_0) \oplus (D_{\bullet},d) 
	\]
	where the projection onto the first summand is given by $\pi$.
\end{cor}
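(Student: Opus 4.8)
The plan is to promote the degreewise direct sum decomposition $A_n = \overline{A}_n \oplus D_n$ supplied by Proposition \ref{prop:pi} to a decomposition inside $\Ch_{\ge 0}(\Ab)$, by checking that the differential $d$ preserves each of the two subgroups $\overline{A}_n$ and $D_n$. Once this is known, the grading-preserving projection $A_n \to \overline{A}_n$ with kernel $D_n$ is automatically a morphism of chain complexes, and the same applies to the complementary projection onto $D_\bullet$ and to the two inclusions; this is the general fact that an internal direct sum of a complex into two $d$-stable graded subgroups is a direct sum in $\Ch_{\ge 0}(\Ab)$. By Proposition \ref{prop:pi} the projection onto the first summand is precisely $\pi$, which gives the asserted description. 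So everything reduces to two stability statements.

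First, $(D_\bullet, d)$ is a subcomplex: this is the standard computation. Writing a degenerate simplex as $s_i a'$ and expanding $d(s_i a') = \sum_{j} (-1)^j d_j s_i a'$ by means of the simplicial identities $d_j s_i = s_{i-1} d_j$ for $j < i$, $d_i s_i = d_{i+1} s_i = \id$, and $d_j s_i = s_i d_{j-1}$ for $j > i+1$, one finds that the two terms $(-1)^i a'$ and $(-1)^{i+1} a'$ cancel and every surviving summand is again degenerate; hence $d(D_n) \subset D_{n-1}$. Second, and this is the point that actually carries content, $(\overline{A}_\bullet, d)$ is a subcomplex on which the restricted differential is simply $d_0$. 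Indeed, for $x \in \overline{A}_n = \bigcap_{i=1}^n \ker(d_i)$ every higher face map annihilates $x$, so $d x = d_0 x$; and $d_0 x$ again lies in $\overline{A}_{n-1}$, since for $1 \le i \le n-1$ the simplicial identity $d_i d_0 = d_0 d_{i+1}$ together with $2 \le i+1 \le n$ gives $d_i d_0 x = d_0 d_{i+1} x = 0$. Thus $d_0$ maps $\overline{A}_n$ into $\overline{A}_{n-1}$.

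Combining the two paragraphs, $d$ respects the internal direct sum $A_n = \overline{A}_n \oplus D_n$ and therefore splits as $d_0 \oplus d$; this yields the isomorphism $(A_\bullet, d) \cong (\overline{A}_\bullet, d_0) \oplus (D_\bullet, d)$ of complexes, with the projection onto the first factor realized by $\pi$ as stated. I do not expect a genuine obstacle here: the argument is a direct consequence of Proposition \ref{prop:pi} plus routine manipulation of the simplicial identities, and the only place requiring a little care is the index range $1 \le i \le n-1$ in the verification that $\overline{A}_\bullet$ is $d_0$-stable, which is exactly the bookkeeping that makes the vanishing condition defining $\overline{A}_\bullet$ propagate to lower degrees.
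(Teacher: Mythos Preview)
Your argument is correct and follows the same route as the paper, which simply says ``Immediate from Proposition \ref{prop:pi}'': the paper had already asserted, in the paragraph preceding Proposition \ref{prop:pi}, that $(\overline{A}_\bullet,d)$ and $(D_\bullet,d)$ are subcomplexes, so your two verifications are just spelling out what the paper takes as stated. With those in hand, the degreewise splitting from Proposition \ref{prop:pi} upgrades to a splitting of complexes exactly as you say.
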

\begin{proof} Immediate from Proposition \ref{prop:pi}.
\end{proof}

We consider the functor
\[
	\C:\; \Ab_{\Delta} \lra \Ch_{\ge 0}(\Ab),\; A_{\bullet} \mapsto (\overline{A}_{\bullet},d_0)
\]
referring to $\C(A_{\bullet})$ as the {\em normalized chain complex} associated to $A_{\bullet}$.
The functor $\C$ is part of an adjunction
\[
	\C:\; \Ab_{\Delta} \llra \Ch_{\ge 0}(\Ab)\; : \N
\]
where the right adjoint $\N$ is, for formal reasons, given as follows: 
For a chain complex $B_{\bullet}$, we have 
\[
	\N(B_{\bullet})_n = \Hom_{\Ch_{\ge 0}(\Ab)}(\C(\ZZ \Delta^{n}), B_{\bullet})
\]
where $\C(\ZZ \Delta^{n})$ denotes the normalized chain complex of the $n$-simplex. More explicitly,
an $n$-simplex in $\N(B_{\bullet})$ is given by a collection $\{b_{\sigma}\}$
where $\sigma$ runs over all inclusions $\sigma: [m] \hra [n]$ and $b_{\sigma} \in B_m$ subject to the equations
\[
	d b_{\sigma} = \sum_{i = 0}^{m} (-1)^i b_{\sigma \circ \partial_i}.
\]

\begin{thm}\label{thm:cdk} The adjunction 
\[
	\C:\;\Ab_{\Delta} \llra \Ch_{\ge 0}(\Ab)\;:\N
\]
is a pair of inverse equivalences.
\end{thm}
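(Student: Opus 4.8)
The plan is to verify that the unit $\eta\colon\id_{\Ab_\Delta}\Rightarrow\N\C$ and the counit $\epsilon\colon\C\N\Rightarrow\id$ of the already-constructed adjunction are natural isomorphisms; by general nonsense this is equivalent to the assertion of the theorem.

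First I would dispose of the counit by a direct computation of $\C\N(B_\bullet)=\overline{\N(B)}_\bullet$. One checks from the explicit description of $\N(B)$ that the face operator $d_i$ acts on a collection $\{b_\sigma\}_{\sigma\colon[m]\hra[n]}$ by $\tau\mapsto b_{\partial_i\circ\tau}$ (the composite of injections being again injective). Hence $\{b_\sigma\}$ lies in $\overline{\N(B)}_n=\bigcap_{i=1}^n\ker(d_i)$ precisely when $b_\sigma=0$ for every injection $\sigma$ whose image omits some element of $\{1,\dots,n\}$. Now the only injections $\sigma\colon[m]\hra[n]$ whose image contains $\{1,\dots,n\}$ are $\id_{[n]}$ and $\partial_0\colon[n-1]\hra[n]$; the defining relation attached to $\sigma=\id_{[n]}$ forces $b_{\partial_0}=d\,b_{\id_{[n]}}$ (all remaining terms vanishing), while the relation attached to $\sigma=\partial_0$ reduces to $d^2 b_{\id_{[n]}}=0$, which is automatic. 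Thus evaluation at $\id_{[n]}$ gives an isomorphism $\overline{\N(B)}_n\xrightarrow{\ \cong\ }B_n$ under which the differential $d_0$ corresponds to $d$; this identification is natural in $B$ and, upon unwinding the adjunction, is the counit.

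For the unit I would argue that $\C$ is conservative and then conclude formally. The splitting $(A_\bullet,d)\cong(\overline A_\bullet,d_0)\oplus(D_\bullet,d)$ of the Corollary exhibits $\C(A)_n=\overline A_n$ as a natural, degreewise retract (via $\pi$, Proposition~\ref{prop:pi}) of the exact functor $A\mapsto A_n$, so $\C$ is exact; moreover, if $\C(A)=0$ then $\overline A_\bullet=0$, whence $A=0$ by induction on $n$ using $A_n\cong\overline A_n\oplus D_n$ with $D_n=\sum_j s_j A_{n-1}$. An exact functor between abelian categories that reflects the zero object is conservative. Now for every $A$ the triangle identity gives $\epsilon_{\C(A)}\circ\C(\eta_A)=\id_{\C(A)}$, and since $\epsilon_{\C(A)}$ is invertible by the previous paragraph, so is $\C(\eta_A)$; conservativity of $\C$ then forces $\eta_A$ to be an isomorphism. (Alternatively, one may iterate the splitting to obtain the Dold--Kan decomposition $A_n\cong\bigoplus_{[n]\twoheadrightarrow[k]}\overline A_k$, natural in $A$, from which conservativity of $\C$ is transparent.)

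I expect no deep obstruction here, since the essential content has been front-loaded into Proposition~\ref{prop:pi}. The step demanding the most care is the counit computation: one must check, with attention to signs, that the manifest isomorphism $\overline{\N(B)}_n\cong B_n$ intertwines the simplicial differential $d_0$ with $d$, and that it genuinely coincides with the adjunction counit rather than merely being some isomorphism. This is precisely the piece whose categorified analogue in the sequel becomes substantially more delicate, since there ``$\overline{\Nc(\B)}$'' must be identified with $\B$ only up to coherent equivalence rather than on the nose.
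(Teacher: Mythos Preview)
Your argument is correct and follows the same overall architecture as the paper: verify the counit directly by computing $\overline{\N(B)}_n$, then deduce that the unit is an isomorphism from the fact that $\C(\eta)$ is an isomorphism together with conservativity of $\C$. Your counit computation is essentially identical to the paper's; your use of the triangle identity to see that $\C(\eta_A)$ is invertible is a slick shortcut where the paper instead describes $\eta$ explicitly and checks $\C(\eta)$ by inspection.

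The one genuine divergence is in how conservativity of $\C$ is established. You argue that $\C$ is exact (each $\overline{A}_n$ being a natural direct summand of $A_n$) and reflects the zero object, hence is conservative. The paper instead runs an induction using the path object $\P(A_\bullet)_n=A_{n+1}$ and its ``loop'' subobject $\Omega(A_\bullet)=\ker(d\colon\P(A_\bullet)\to A_\bullet)$, together with the identity $\C\circ\Omega\cong\Omega\circ\C$, to propagate the equivalence from degree $n$ to degree $n+1$ via the short exact sequence $\Omega(A)\hookrightarrow\P(A)\to A$. Your route is shorter and perfectly adequate for abelian groups; the paper's route is chosen deliberately because it categorifies. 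In the stable setting of \S\ref{sec:conservative} there is no abelian structure on $\St$ making ``exact functor that reflects zero'' meaningful, whereas the path/loop argument survives almost verbatim (with the short exact sequence replaced by a semiorthogonal decomposition). So your proof is fine for Theorem~\ref{thm:cdk}, but be aware that it is precisely the conservativity step whose categorified analogue requires the paper's alternative argument.
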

\begin{proof}
	We analyze the counit of the adjunction $\C(\N(B_{\bullet})) \to B_{\bullet}$. An $n$-simplex
	in $\N(B_{\bullet})$ is given by a collection $\{b_{\sigma}\}$ and the counit maps this
	collection to $b_{\id} \in B_n$. The condition that $\{b_{\sigma}\}$ be a normalized chain
	translates to the requirement that $b_{\sigma} = 0$ for all $\sigma$ that factor through one
	of the face maps $\partial_i$, $i>0$. But this implies that the only possibly nonzero
	elements are $b_{\id}$ and $b_{\partial_0}$. Further, the element $b_{\partial_0}$ is
	determined as the image of $b_{\id}$ under $d$. Therefore, the counit is an isomorphism.

	The unit $u: A_{\bullet} \to \N(\C(A_{\bullet}))$ is given by associating to $a$ in $A_n$ the
	$n$-simplex in $\N((A_{\bullet}, d))$ given by the collection $\{a_{\sigma}\}$ with
	$a_{\sigma} = \sigma^*a$ and then postcomposing with the map
	\[
		\N(\pi): \N((A_{\bullet}, d)) \lra \N((\overline{A}_{\bullet}, d_0)).
	\]
	By an argument similar as for the counit, it is immediate that $\C(u)$ is an isomorphism.
	We conclude the proof in virtue of Proposition \ref{prop:conservative} below. 
\end{proof}

\begin{prop}\label{prop:conservative} The normalized chains functor $\C$ is conservative: a
	morphism $f: A_{\bullet} \to A'_{\bullet}$ of simplicial abelian groups is an isomorphism if
	and only if $\C(f)$ is an isomorphism of chain complexes.
\end{prop}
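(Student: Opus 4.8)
The plan is to exploit the direct-sum decomposition already established to reduce conservativity to an induction on simplicial degree. First I would invoke the Corollary: a morphism $f: A_{\bullet} \to A'_{\bullet}$ respects the splitting $A_n = \overline{A}_n \oplus D_n$ (since $\pi$ is natural in $A_{\bullet}$, being defined by pullback along the fixed maps $f_{\underline{j}}$), so $f$ is an isomorphism in degree $n$ if and only if both $\C(f)_n = \overline{f}_n$ and its restriction $D(f)_n$ to the degenerate subgroup are isomorphisms. Thus it suffices to show that $\C(f)$ being an isomorphism forces $D(f)_n$ to be an isomorphism for every $n$.

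Next I would set up the induction on $n$. For $n = 0$ we have $D_0 = 0$, so there is nothing to check, and $A_0 = \overline{A}_0$. For the inductive step, the key point is that $D_n$ is spanned by the images of the degeneracy maps $s_j: A_{n-1} \to A_n$, and more precisely that the degenerate simplices assemble into a quotient of copies of $A_{n-1}$: there is a surjection $\bigoplus_j A_{n-1} \twoheadrightarrow D_n$ compatible with $f$, or — cleaner — one can identify $D_n$ with a colimit (a finite iterated pushout) of copies of $A_k$ for $k < n$ built out of the degeneracy/face relations. Since $f$ is a map of simplicial abelian groups, it is compatible with all of this structure, so by the inductive hypothesis $f$ is an isomorphism on each $A_k$ with $k < n$, hence on the colimit, hence $D(f)_n$ is an isomorphism. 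Combined with the hypothesis that $\overline{f}_n = \C(f)_n$ is an isomorphism, the splitting gives that $f_n$ is an isomorphism.

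The main obstacle — really the only point requiring care — is making the description of $D_n$ in terms of lower-degree data precise enough to carry the induction, i.e. producing a functorial presentation of $D_n$ as a colimit (or as a quotient of $\bigoplus_{j} A_{n-1}$ with explicitly described relations coming from the simplicial identities among the $s_j$). The cleanest route is probably to show directly that the composite $A_{n-1} \xrightarrow{s_j} A_n \xrightarrow{\pi'} \overline{A}_n$ is zero and that the $s_j$ together with $\pi$ identify $D_n$ with $\bigoplus_j s_j(\overline{A}_{n-1} \oplus D_{n-1})/(\text{relations})$; alternatively one can avoid an explicit presentation altogether by arguing that $f_n$ surjective and injective separately follow from the lower-dimensional statements using the retraction $\pi$ and the fact that every element of $D_n$ is a sum of degenerate simplices each of which is $s_j$ applied to something in degree $n-1$. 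I expect the bookkeeping of the simplicial identities to be the fiddly part, but conceptually the argument is just: split off the normalized part, and handle the complementary degenerate part by induction since it is built functorially from strictly lower degrees.
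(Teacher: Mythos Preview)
Your strategy via the natural splitting $A_n = \overline{A}_n \oplus D_n$ is sound and can be made into a proof, but the step you flag as ``fiddly'' contains a genuine gap as written. Knowing only that $D_n$ is a quotient of $\bigoplus_j A_{n-1}$ is not enough: to deduce that $D(f)_n$ is injective you would need the kernel of $\bigoplus_j A_{n-1} \twoheadrightarrow D_n$ to map \emph{onto} the corresponding kernel for $A'$, and checking this already presupposes injectivity of $f_n$, which is what you are trying to prove. Neither of your two suggested routes (an explicit colimit presentation of $D_n$, or the bare observation that degenerate simplices come from degree $n-1$ via the $s_j$) closes this circularity without substantial extra input such as the finer decomposition $A_n \cong \bigoplus_{\sigma:[n]\twoheadrightarrow[k]} \overline{A}_k$. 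There is, however, a short direct fix that uses the face maps rather than a presentation of $D_n$: once surjectivity of $f_n$ is established as you indicate, take $a \in \ker f_n$; naturality of $\pi$ gives $\pi(a)=0$, so $a \in D_n$; but also, for each $1 \le i \le n$, one has $f_{n-1}(d_i a) = d_i f_n(a) = 0$, whence $d_i a = 0$ by the inductive hypothesis, so $a \in \overline{A}_n$ as well, and therefore $a \in \overline{A}_n \cap D_n = 0$.

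The paper takes a different route, chosen because it categorifies. Rather than splitting off $D_n$, it introduces the path object $\P(A_\bullet)$ with $\P(A_\bullet)_n = A_{n+1}$ (pullback along $[n]\mapsto [n]\ast[0]$), sets $\Omega(A_\bullet) = \ker\bigl(d\colon \P(A_\bullet)\to A_\bullet\bigr)$ where $d$ is assembled from the omitted last face maps, and observes the identity $\C\circ\Omega \cong \Omega\circ\C$. The levelwise short exact sequence $\Omega(A_\bullet)_n \hookrightarrow A_{n+1} \twoheadrightarrow A_n$ then drives the induction: the hypothesis applies both to $f$ and to $\Omega(f)$ (since $\C(\Omega(f))\cong\Omega(\C(f))$ is an isomorphism), so $f_n$ and $\Omega(f)_n$ are isomorphisms, forcing $f_{n+1}$ to be one. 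This argument never invokes the direct-sum splitting, which is the point: in the categorified statement (Proposition~\ref{prop:catconservative}) the splitting is replaced by a semiorthogonal decomposition and no longer supports a clean induction on a degenerate summand, whereas the $\P/\Omega$ short exact sequence argument transports essentially verbatim.
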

\begin{proof} Given a simplicial abelian group $A_{\bullet}$, we denote by $\P(A_{\bullet})$ the
	simplicial abelian group obtained by pullback of $A_{\bullet}$ along the functor
	\[
		\Delta \to \Delta,\; [n] \mapsto [n] \ast [0]
	\]
	so that $\P(A_{\bullet})_n = A_{n+1}$. The collection of omitted face maps
	\[
		d_n : A_{n} \to A_{n-1}
	\]
	defines a map of simplicial abelian groups $d: \P(A_{\bullet}) \to A_{\bullet}$. We denote the
	kernel of $d$ by $\Omega(A_{\bullet})$. Similarly, we define for a connective chain complex
	$B_{\bullet}$ the chain complex $\Omega(B_{\bullet})$ as the shift
	\[
		B_1 \leftarrow B_2 \leftarrow B_3 \leftarrow \dots, 
	\]
	ommitting $B_0$.
	It is immediate from the definitions that we have a natural isomorphism
	\begin{equation}\label{eq:omega}
			\C(\Omega(A_{\bullet})) \cong \Omega\C(A_{\bullet}).
	\end{equation}
	We show by induction on $n \ge 0$ that, for every map $f: A_{\bullet} \to A'_{\bullet}$ such
	that $C(f)$ is an isomorphism of chain complexes, the map $f_n: A_n \to A'_n$ is an
	isomorphism. 
	For $n = 0$, the claim is apparent. Assume the induction hypothesis holds for
	a fixed $n \ge 0$ and all maps of simplicial abelian groups. For a given map $f: A_{\bullet}
	\to A'_{\bullet}$, consider the diagram simplicial abelian groups
	\[
		\xymatrix{
			\Omega(A_{\bullet}) \ar[r] \ar[d]^{\overline{f}}& \P(A_{\bullet})
			\ar[d]^{\P(f)} \ar[r] &
			A_{\bullet} \ar[d]^f\\
			\Omega(A'_{\bullet}) \ar[r]& \P(A'_{\bullet}) \ar[r] &
			A'_{\bullet} }
	\]
	where the horizontal sequences are short exact. 
	Then the induction hypothesis implies that $\overline{f}_n$ and $f_n$ are isomorphisms so
	that $\P(f)_n = f_{n+1}$ is an isomorphism as well.
\end{proof}

\section{The categorified Dold-Kan correspondence}\label{sec:cdk}
 
In this section, we prove the main result of this work: a categorification of the Dold-Kan
correspondence relating simplicial objects and connective chain complexes with values in the
category of stable $\infty$-categories. We begin by defining these notions in detail.

\subsection{Basic definitions}
\label{sec:basic}

\subsubsection{Model for $(\infty,2)$-categories}\label{subsec:2cat} 

Let $\sSet$ denote the category of simplicial sets. Following \cite{lurie:htt}, we define an {\em
$\infty$-category} to be a simplicial set satisfying the inner horn filling conditions. We define
$\Cat_{\infty}$ to be the full subcategory of $\sSet$ spanned by the $\infty$-categories. 
$\infty$-categories are the fibrant objects of a model structure on the category of marked
simplicial sets $\msSet$ with marked edges given by the equivalences. As explained in
\cite{lurie:2cat}, the category of $\msSet$-enriched categories carries a model structure which can
be regarded as a model for the theory of $(\infty,2)$-categories. The $(\infty,2)$-categorical
structures that appear in this work will be organized within this model and related to
other models via the theory developed in \cite{lurie:2cat}. The fibrant objects within this model
structure can be identified with the $\Cat_{\infty}$-enriched categories.  

\subsubsection{Stable $\infty$-categories}\label{subsec:stable}

The simplicial set of functors between a pair of $\infty$-categories forms another $\infty$-category
so that $\Cat_{\infty}$ becomes a $\Cat_{\infty}$-enriched category with respect to the Cartesian
monoidal structure. Marking equivalences in the various functor $\infty$-categories, $\Cat_{\infty}$
becomes a fibrant object in the model category of $\msSet$-enriched categories from
\ref{subsec:2cat}. We may therefore interpret $\Cat_{\infty}$ as a specific model for the
$(\infty,2)$-category of $\infty$-categories. We further denote by $\St \subset \Cat_{\infty}$ the
$\Cat_{\infty}$-enriched subcategory with {\em stable} $\infty$-categories as objects and functor
$\infty$-categories spanned by the {\em exact} functors in the sense of \cite{lurie:ha}.

\subsubsection{The simplex $2$-category}\label{subsec:simplex}

By a $2$-category, we mean a category enriched in categories. A $2$-category defines a
$\msSet$-enriched category by passing to nerves of the enriched mapping categories and marking
equivalences. We will typically leave this passage implicit so that, referring to \ref{subsec:2cat},
we may consider any $2$-category as an $(\infty,2)$-category. We denote by $\CCat$ the $2$-category
of small categories and by $\DDelta \subset \CCat$ the full $2$-subcategory spanned by the standard
ordinals $\{[n]\}$, considered as categories. 

\subsubsection{Localization}\label{subsec:local}

We will construct $\infty$-categories via localization: Given a small category $\Cc$ and a
set of morphisms $W$, there is an associated $\infty$-category $\L_{W}\Cc$, equipped with a functor
$\N(\Cc) \to \L_{W}\Cc$ universal among all functors that send $W$ to equivalences (cf.
\cite[1.3.4]{lurie:ha}). 

\subsubsection{$2$-simplicial stable $\infty$-categories}\label{subsec:simpstab}

We denote by $\St_{\DDelta}$ the category of $\Cat_{\infty}$-enriched functors from the opposite
$2$-simplex category $\DDelta^{(\op,-)}$ to $\St$. This category comes equipped with a collection of
weak equivalences given by those $\Cat_{\infty}$-enriched natural transformations that are levelwise
equivalences of stable $\infty$-categories. Via localization, we obtain a corresponding
$\infty$-category $\L(\St_{\DDelta})$ of $2$-simplicial stable $\infty$-categories.

\begin{rem} Let $\A_{\bullet} \in \St_{\DDelta}$, and let $n \ge 1$. The functor of $1$-categories
	$\Delta^{\op} \to \St_{\infty}$ underlying $\A_{\bullet}$ provides us with exact functors
	\[
		\begin{tikzcd}
			\A_{n-1} \arrow[shift left=1.5em]{rr}[description]{s_0}\arrow[shift
			right=1.5em]{rr}[description]{s_{n-1}} & \vdots &  \arrow[shift
			right=2.5em]{ll}[description]{d_0} 
			\arrow[shift left=2.5em]{ll}[description]{d_n} 
			\A_n.
		\end{tikzcd}
	\]
	The additional data captured by the $2$-functoriality of $\A_{\bullet}$ contains unit and
	counit transformations which exhibit a sequence of adjunctions
	\[
		d_0 \vdash s_0 \vdash d_1 \vdash \dots \vdash s_{n-1} \vdash d_n.
	\]
\end{rem}

\subsubsection{Connective chain complexes of stable $\infty$-categories}\label{subsec:chainstab}

We denote by $\NN$ the poset of nonnegative integers, considered as a category. We denote by
$\Fun(\NN^{\op},\St)$ the category of (strict) functors from the opposite category of $\NN$
to the category $\St$ of stable $\infty$-categories. We introduce the full subcategory
\[
	\Ch_{\ge 0}(\St) \subset \Fun(\NN^{\op},\St)
\]
given by those diagrams
\[
	\B_0 \overset{d}{\lla} \B_1 \overset{d}{\lla} \B_2 \overset{d}{\lla} \; \cdots
\]
of stable $\infty$-categories that satisfy the following condition:
for every $i \ge 0$, the functor $d^2: \B_{i+2} \to \B_i$ is a zero object of the
$\infty$-category $\Fun(\B_{i+2},\B_i)$.
The category $\Ch_{\ge 0}(\St)$ comes equipped with a class of weak equivalences given by those
natural transformations that are levelwise equivalences. We refer to the corresponding localization
$\L\Ch_{\ge 0}(\St)$ as the $\infty$-category of {\em connective chain complexes of stable
$\infty$-categories.} 

\begin{rem} At first sight, our definition of a complex of stable $\infty$-categories may seem too
	naive. For example, the analogous definition of a connective complex of {\em objects} in a stable
	$\infty$-category $\A$ as a functor
	\[
		X: \N(\NN^{\op}) \lra \A
	\]
	satisfying the condition $d^2 \simeq 0$ really {\em is} too naive. The reason is that,
	for every $i \ge 0$, there is a potentially nontrivial space of paths in the Kan complex
	$\Map_{\A}(X_{i+2},X_i)$ from $d^2$ to $0$. Following general principles, the condition $d^2 \simeq
	0$ should be replaced by the {\em choice} of a path in $\Map_{\A}(X_{i+2},X_i)$ between
	$d^2$ and $0$. Further, these choices are supposed to be part of a coherent system of
	trivializations $d^n \simeq 0$, $n \ge 2$, corresponding to trivializations of higher
	Massey products. This coherence data is important: it is, for example, needed to form the
	totalization of a complex. One way to codify all this data is to remember the complex in
	terms of the filtered object formed by the totalizations of its various truncations. This is
	the point of view taken in \cite{lurie:ha} where a connective complex of objects in a stable
	$\infty$-category corresponds to a functor
	\begin{equation}\label{eq:filtered}
			Y: \N(\NN) \lra \A
	\end{equation}
	without any further conditions. The actual terms of the complex captured by such a datum are
	then given as shifts of the cofibers of the maps $Y_i \to Y_{i+1}$. One concrete justification for
	this being a reasonable notion of a complex is provided by a Dold-Kan correspondence
	relating simplicial objects and connective chain complexes with values in a given stable
	$\infty$-category (\cite[1.2.4]{lurie:ha}).

	In contrast, given a chain complex of stable $\infty$-categories in our sense, the space of
	identifications $d^2 \simeq 0$ is contractible, since it is the space of equivalences between
	$d^2$ and the zero object $0$ in the $\infty$-category $\Fun(\B_{i+2},\B_i)$. Therefore, in
	this context, there is no analog of the coherent system of trivializations captured by
	$\eqref{eq:filtered}$.
\end{rem}

\subsection{Statement of the theorem}

Using the terminology introduced in \S \ref{sec:basic}, we may formulate the main theorem.

\begin{thm}\label{thm:catdk} There exist functors
	\begin{equation}
		\Cc: \St_{\DDelta} \lra \Ch_{\ge 0}(\St)
	\end{equation}
	and 
	\begin{equation}
		\Nc: \Ch_{\ge 0}(\St) \lra \St_{\DDelta}
	\end{equation}
	which induce a pair of inverse equivalences
	\[
		\Cc:  \L\St_{\DDelta} \overset{\simeq}{\llra} \L\Ch_{\ge 0}(\St): \Nc
	\]
	of $\infty$-categories.
\end{thm}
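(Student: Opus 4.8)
The plan is to carry out a step-by-step categorification of the proof of Theorem~\ref{thm:cdk}. For $\A_\bullet \in \St_{\DDelta}$ and $n \ge 0$, let $\overline{\A}_n \subset \A_n$ be the full subcategory spanned by the objects $X$ with $d_i X \simeq 0$ for $1 \le i \le n$. Exactness of the face functors makes $\overline{\A}_n$ a stable subcategory, and the simplicial identities show that $d_0$ restricts to an exact functor $\overline{\A}_n \to \overline{\A}_{n-1}$ whose square is a zero object of $\Fun(\overline{\A}_n, \overline{\A}_{n-2})$. This defines the \emph{categorified normalized chains} functor $\Cc \colon \St_{\DDelta} \to \Ch_{\ge 0}(\St)$, $\A_\bullet \mapsto (\overline{\A}_\bullet, d_0)$; it visibly preserves levelwise equivalences and therefore descends to the localizations.

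The essential new ingredient, replacing Proposition~\ref{prop:pi}, is a categorified retraction $\pi \colon \A_n \to \A_n$ onto $\overline{\A}_n$. Each $f_{\underline{j}}$, $\underline{j} \in \{0,1\}^n$, induces an exact endofunctor $f_{\underline{j}}^\ast$ of $\A_n$, and the natural transformations $f_{\underline{j}} \Rightarrow f_{\underline{j}'}$ coming from the $2$-categorical structure of $\DDelta$ assemble these into an $n$-cube in $\Fun(\A_n,\A_n)$; I let $\pi$ be its total cofiber, the categorical avatar of the alternating sum \eqref{eq:pi}. The three steps of the proof of Proposition~\ref{prop:pi} now categorify: (i) for $0 < i \le n$ the functor $d_i$ collapses the two faces of the cube orthogonal to the $i$th direction together with the transition map between them, so $d_i \pi \simeq 0$ and $\pi$ factors through $\overline{\A}_n$; (ii) if $X$ lies in the essential image of a degeneracy, the cube acquires a pair of parallel faces related by an identity, so $\pi(X) \simeq 0$ and $\pi$ annihilates the stable subcategory $\D_n \subset \A_n$ generated by the degenerate simplices; (iii) for $X \in \overline{\A}_n$ every vertex $f_{\underline{j}}^\ast X$ with $\underline{j} \ne (1,\dots,1)$ vanishes, since such an $f_{\underline{j}}$ is not surjective and hence factors through some $\partial_i$, $i > 0$, so the total cofiber collapses to $X$. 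Thus $\pi$ is an idempotent retraction onto $\overline{\A}_n$, yielding a split decomposition $\A_n \simeq \overline{\A}_n \oplus \D_n$ and, levelwise, $\A_\bullet \simeq \Cc(\A_\bullet) \oplus \D_\bullet$ --- the analogue of the Corollary following Proposition~\ref{prop:pi}.

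I then construct $\Nc$ explicitly, mirroring the description of $\N$: for a complex $\B_\bullet$, an object of $\Nc(\B_\bullet)_n$ is a coherent system $\{X_\sigma\}$ indexed by the injections $\sigma \colon [m] \hra [n]$, with $X_\sigma \in \B_m$, together with a compatible family of equivalences exhibiting $d(X_\sigma)$ as the totalization $\tot(\{X_{\sigma \circ \partial_i}\}_{0 \le i \le m})$. One organizes this as a weighted limit over an indexing category built from $[n]$, checks that it is again stable, and assembles the $\Nc(\B_\bullet)_n$ over varying $[n]$ into a $\Cat_\infty$-enriched functor on $\DDelta^{(\op,-)}$ --- the restrictions along injections being complemented by Kan extensions along surjections, and the $2$-cells witnessing the adjunctions $d_0 \vdash s_0 \vdash \dots \vdash d_n$ being extracted from the combinatorics of those Kan extensions. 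The construction is arranged so that $\Nc(\B_\bullet)_n$ is the mapping object in $\Ch_{\ge 0}(\St)$ out of $\Cc$ applied to the $2$-simplicial stable $\infty$-category corepresented by $[n]$, whence $\Nc$ is right adjoint to $\Cc$ by a Yoneda-type argument. Following the proof of Theorem~\ref{thm:cdk}, I analyse the counit $\Cc\Nc(\B_\bullet) \to \B_\bullet$: a system lying in $\overline{\Nc(\B_\bullet)}_n$ has $X_\tau \simeq 0$ whenever $\tau$ factors through $\partial_i$, $i > 0$, so only $X_{\id}$ and $X_{\partial_0}$ survive and the totalization condition forces $X_{\partial_0} \simeq d(X_{\id})$; hence the counit is an equivalence of complexes. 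The unit $u \colon \A_\bullet \to \Nc\Cc(\A_\bullet)$ is built from the system $\{\sigma^\ast X\}$ post-composed with $\Nc(\pi)$, and the same kind of inspection (or the triangle identity together with the counit being an equivalence) shows $\Cc(u)$ is an equivalence.

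It remains to establish the categorified analogue of Proposition~\ref{prop:conservative}: $\Cc$ reflects levelwise equivalences, hence is conservative. Pulling $\A_\bullet$ back along $[n] \mapsto [n] \ast [0]$ gives $\P(\A_\bullet)$ with $\P(\A_\bullet)_n = \A_{n+1}$; the omitted last-face functors define $d \colon \P(\A_\bullet) \to \A_\bullet$, whose \emph{fiber} $\Omega(\A_\bullet)$ has $\Omega(\A_\bullet)_n$ the full subcategory of $\A_{n+1}$ killed by $d_{n+1}$. The extra degeneracy $s_n$ provides a section of $d$, so the Karoubi splitting yields $\A_{n+1} \simeq \A_n \oplus \Omega(\A_\bullet)_n$, naturally in the $2$-simplicial structure; and, since $\overline{\Omega(\A_\bullet)}_n = \overline{\A}_{n+1}$ with matching differentials, one obtains the natural equivalence $\Cc(\Omega(\A_\bullet)) \simeq \Omega\Cc(\A_\bullet)$. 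A strong induction on $n$ over all morphisms of $2$-simplicial stable $\infty$-categories then shows that $\Cc(f)$ being a levelwise equivalence forces every $f_n$ to be one: the base case is $\Cc(\A_\bullet)_0 = \A_0$, and the inductive step applies the hypothesis to both $f$ and $\Omega(f)$ --- the latter legitimate because $\Cc(\Omega f) \simeq \Omega\Cc(f)$ --- and reads off $f_{n+1}$ from the splitting. Conservativity of $\Cc$ together with $\Cc(u)$ an equivalence then gives that $u$ is an equivalence, so $\Cc$ and $\Nc$ are mutually inverse equivalences after localization. I expect the main obstacle to be the explicit construction of $\Nc$ with all of its higher coherence data --- in particular the extra $2$-functoriality, the chain of adjoint face and degeneracy functors, which is invisible in the $1$-categorical source; once $\Nc$ is in place the remaining steps are faithful, if laborious, translations of the classical arguments, the only genuinely new verification being the total-cofiber computations underlying the categorified retraction $\pi$.
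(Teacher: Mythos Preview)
Your overall plan mirrors the paper's, but there is a genuine gap in the conservativity step, and a second structural choice that the paper deliberately avoids.

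\medskip
\noindent\textbf{The splitting is only semiorthogonal, not direct.} In your induction you assert that the section $s_n$ of $d_{n+1}$ gives a ``Karoubi splitting'' $\A_{n+1} \simeq \A_n \oplus \Omega(\A_\bullet)_n$, and then read off that $f_{n+1}$ is an equivalence from $f_n$ and $\Omega(f)_n$. For stable $\infty$-categories this fails: a retraction yields only a \emph{semiorthogonal} decomposition $\A_{n+1} = \langle \Omega(\A_\bullet)_n, \A_n\rangle$, and two stable $\infty$-categories with equivalent semiorthogonal components need not be equivalent, because the gluing functor between the components is extra data. (Already $\Fun(\Delta^1,\A)$, with $d_1 \dashv s_0 \dashv d_0$, is not $\A \oplus \A$.) The paper repairs exactly this point with Lemma~\ref{lem:semiglue}: one uses the \emph{additional} adjoint $d_n \dashv s_n$ supplied by the $2$-functoriality of $\DDelta$ to control the gluing functor and conclude that $f_{n+1}$ is an equivalence. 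The same objection applies to your earlier ``split decomposition $\A_n \simeq \overline{\A}_n \oplus \D_n$'': your idempotent $\pi$ gives a semiorthogonal, not orthogonal, decomposition.

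\medskip
\noindent\textbf{The adjunction is not available.} You propose to make $\Nc$ a right adjoint to $\Cc$ by realizing $\Nc(\B_\bullet)_n$ as a mapping object out of ``$\Cc$ applied to the $2$-simplicial stable $\infty$-category corepresented by $[n]$'', then run the unit/counit argument. The paper does not attempt this, and for good reason: there is no evident analogue of $\ZZ\Delta^n$ in $\St_{\DDelta}$ to play the corepresenting role, and $\Cc$, $\Nc$ are strict functors between ordinary categories that one then localizes. Instead the paper constructs $\Nc$ explicitly (Definition~\ref{defi:catnerve}, via diagrams in the Grothendieck construction $\chi(\B_\bullet)$ indexed by the poset $\NN_{/[n]}$) and produces the equivalence $\id \simeq \Nc\circ\Cc$ as a \emph{zigzag} of weak equivalences
\[
\id \overset{\eta}{\lra} \GGamma\circ\CChi \overset{\alpha}{\lla} \Fc \overset{\beta}{\lra} \widetilde{\Nc}\circ\Cc \overset{\theta}{\lla} \Nc\circ\Cc
\]
passing through the lax Grothendieck construction $\CChi$ and an auxiliary functor $\Fc$. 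Your categorified retraction $\pi$ (total cofiber of the cube $\{f_{\underline{j}}^\ast\}$) is morally present in the paper --- the same cube $f$ appears in Definition~\ref{defi:cube} and drives condition~\ref{P3} for $\Fc$ --- but the paper packages it inside the diagram category rather than as an endofunctor, precisely so that the comparison can be carried out by relative Kan extension arguments rather than by manipulating a putative adjunction. Your counit analysis for $\Cc\Nc \simeq \id$ is essentially the paper's Proposition~\ref{prop:compgroth} and is fine once $\Nc$ is correctly defined.
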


\begin{rem}
	The classical Dold-Kan correspondence generalizes to categories that are additive and idempotent
	complete. While the $(\infty,2)$-category $\St$ does not have direct analogs of these two
	properties, it {\em does} admit certain categorified variants: 
	\begin{enumerate}[label=(\roman{*})]
		\item Given two functors $f$ and $g$ between stable $\infty$-categories $\A$ and $\A'$,
			equipped with a natural transformation $\eta: f \Rightarrow g$, we may form the
			cone of $\eta$ as a replacement for the difference of two maps. 
		\item Given an idempotent $e: \A \to \A$ that arises from a fully faithful embedding $i: \A' \subset
			\A$ with right adjoint $q: \A \to \A'$ as $e = i \circ q$, then there is a canonical
			natural transformation $e \Rightarrow \id_{\A}$ whose cone will be a projector
			onto the subcategory $\ker(q) \subset \A$. Together, the subcategories
			$\ker(q)$ and $\A'$ form a {\em semiorthogonal decomposition} of $\A$ (cf.
			\cite{bk:semi}). 
	\end{enumerate}
	Our proof of Theorem \ref{thm:catdk} relies on a systematic utilization of these features
	of $\St$.  Abelian categories form a pleasant class of categories for which the Dold-Kan
	correspondence holds. It is an interesting task to introduce a suitable categorified
	axiomatic framework of ``$2$-abelian'' $(\infty,2)$-categories. One basic requirement would
	be that the proof of the categorified Dold-Kan correspondence generalizes to this context.
\end{rem}

Before we construct the functors in Theorem \ref{thm:catdk} and provide its proof, we need some
preliminaries on Grothendieck constructions.

\subsection{Grothendieck constructions}

Let $C$ be a category and let $\Grp$ denote the category of small categories. Recall that, given a functor
\[
	F: C^{\op} \lra \Grp,
\]
the classical {\em Grothendieck construction} of $F$ is the category $\chi(F)$ with 
\begin{itemize}
	\item objects are pairs $(c,x)$ where $c \in C$ and $x \in F(c)$,
	\item a morphism from $(c,x)$ to $(c',x')$ consists of a morphism $f: c \to c'$ in $C$
		together with a morphism $x \to F(f)(x')$ in $F(c)$. 
\end{itemize}
The construction provides a functor
\[
	\chi: \Fun(C^{\op},\Grp) \lra \Fib_{/C}
\]
into the category $\Fib_{/C}$ of categories fibered in groupoids over $C$. The functor $\chi$ has a
left adjoint $\Xi$ given by 
\[
	\Xi(D \to C)(c) = D \times_{C} C_{c/}
\]
and a right adjoint $\Gamma$ provided by 
\[
	\Gamma(D \to C)(c) = \Fun_{C}(C_{/c},D).
\]
Upon localizing the categories $\Fun(C^{\op},\Grp)$ and $\Fib_{/C}$ along suitable classes of weak
equivalences, the functor $\chi$ becomes an equivalence with inverse provided by both $\Xi$ and
$\Gamma$. As a central tool in his approach to higher category theory, Lurie has introduced various
generalizations of the adjunction
\[
	\Xi: \Fib_{/C} \llra \Fun(C^{\op},\Grp): \chi
\]
in the context of Quillen's theory of model categories. In this work, we will use various explicit versions
of these Grothendieck constructions and a description of their inverses in terms of generalizations
of the right adjoint functor $\Gamma$. In this section, we survey the theory in the form it will be
applied below.

\subsubsection{The $(\infty,1)$-categorical Grothendieck construction}
\label{subsec:grothendieck}

Let $C$ be an ordinary category and let 
\[
	F: C^{\op} \lra \Cat_{\infty}
\]
be a functor into the category $\Cat_{\infty}$ of $\infty$-categories, considered as a full
subcategory of the category $\Set_{\Delta}$ of simplicial sets. 

\begin{defi} We define a simplicial set $\chi(F)$
as follows: An $n$-simplex of $\chi(F)$ consists of 
\begin{enumerate}
	\item an $n$-simplex $\sigma: [n] \to C$ of the nerve of $C$,
	\item for every $I \subset [n]$, a functor
		\[
			\Delta^I \lra F(\sigma(\min(I)))
		\]
		such that, for every $I \subset J \subset [n]$, the diagram
		\[
		\xymatrix{
			\Delta^I \ar[r]\ar[d] & F(\sigma(\min(I))\ar[d]\\
			\Delta^J \ar[r] & F(\sigma(\min(J)).
		}
		\]
\end{enumerate}
The simplicial set $\chi(F)$ comes equipped with an apparent forgetful map $\chi(F) \to \N(C)$. We
consider $\chi(F)$ as an object of $\msSet_{/\N(C)}$ marking the Cartesian edges, and refer to it
as the {\em Grothendieck construction of $F$}. 
\end{defi}

As a consequence of results in \cite{lurie:htt}, the
Grothendieck construction induces an equivalence of $\infty$-categories
\[
	\chi: \L\Fun(C^{\op}, \Cat_{\infty}) \overset{\simeq}{\lra} \L(\msSet_{/\N(C)})^{\circ}
\]
obtained by localizing along levelwise and fiberwise categorical equivalences. The right-hand
symbol $\circ$ signifies that we restrict to the full subcategory of Cartesian fibrations with
Cartesian edges marked.

We will use an explicit model for an inverse equivalence to $\chi$ which we now
construct. For an object $c \in C$, we denote by $C_{/c}$ the overcategory of $c$. Keeping track
of the forgetful functor $C_{/c} \to C$, the various overcategories organize to define a functor
\[
	C \lra \msSet_{/\N(C)},\; c \mapsto \N(C_{/c})^{\#}.
\]
Given an object $X \in \msSet_{/\N(C)}$, we thus obtain a functor
\[
	\Gamma(X): C^{\op} \lra \Cat_{\infty},\; c \mapsto \Map^{\flat}_{\N(C)}(\N(C_{/c})^{\#}, X).
\]

\begin{prop}\label{prop:groth} There is a natural transformation
	\begin{equation}
			\eta: \id \lra \Gamma \circ \chi
	\end{equation}
	which is a levelwise weak equivalence. In particular, upon localization, the functor
	$\Gamma$ defines an inverse to $\chi$.
\end{prop}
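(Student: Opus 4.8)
The plan is to construct the natural transformation $\eta: \id \to \Gamma \circ \chi$ explicitly and then verify the levelwise weak equivalence by a cofinality/Quillen-type argument. First I would unwind the definition: for $F: C^{\op} \to \Cat_\infty$ and an object $c \in C$, the value $(\Gamma \circ \chi)(F)(c) = \Map^\flat_{\N(C)}(\N(C_{/c})^\#, \chi(F))$ is the $\infty$-category of sections over $\N(C_{/c})$ of the Cartesian fibration $\chi(F) \to \N(C)$, sending Cartesian edges to marked edges. There is a tautological map $F(c) \to \Map^\flat_{\N(C)}(\N(C_{/c})^\#, \chi(F))$: an object $x \in F(c)$ gives, over each object $(d \to c)$ of $C_{/c}$, the image of $x$ under the transition functor $F(c) \to F(d)$, and the Cartesian-edge condition is automatic by construction of $\chi(F)$. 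Checking that these assemble into a map of simplicial sets natural in $c$, hence into $\eta$, is a matter of matching the explicit $n$-simplex description of $\chi(F)$ (a simplex $\sigma: [n] \to C$ plus compatible functors $\Delta^I \to F(\sigma(\min I))$) against the mapping-space formula; this is bookkeeping with the diagrams appearing in the definition of $\chi(F)$.

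Next I would prove that $\eta_c: F(c) \to \Map^\flat_{\N(C)}(\N(C_{/c})^\#, \chi(F))$ is a categorical equivalence for every $c$. The key observation is that $C_{/c}$ has a terminal object, namely $\id_c$, so the inclusion $\{\id_c\} \hookrightarrow C_{/c}$ is (the nerve of) a right anodyne / cofinal map, and more precisely $\N(C_{/c})^\# \to \N(\{\id_c\})^\# = \Delta^0$ admits a section that is part of a deformation retraction in the marked model structure over $\N(C)$. Evaluating a section at the terminal object $\id_c$ identifies the fiber of $\chi(F) \to \N(C)$ over $c$, which is exactly $F(c)$; so restriction along $\{\id_c\} \hookrightarrow C_{/c}$ gives a map $\Map^\flat_{\N(C)}(\N(C_{/c})^\#, \chi(F)) \to F(c)$ which one checks is a homotopy inverse to $\eta_c$. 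The cleanest way to package this is to invoke that $\chi(F) \to \N(C)$ is a Cartesian fibration and that evaluation at a terminal object of the indexing category computes the limit of a Cartesian section up to equivalence --- this is a standard consequence of the straightening/unstraightening results of \cite{lurie:htt} already cited right before the proposition.

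The main obstacle I expect is not the pointwise equivalence but the \emph{naturality and well-definedness} of $\Gamma(X)$ as a functor $C^{\op} \to \Cat_\infty$ together with $\eta$ as a genuine natural transformation of functors --- i.e.\ strictness issues. The assignment $c \mapsto \N(C_{/c})^\#$ is strictly functorial in $c$, so $\Gamma(X)$ is a strict functor and the target of $\eta$ is fine; the subtlety is that $\eta_c$ must be strictly natural in $c$, which forces one to use the fully explicit $n$-simplex model of $\chi(F)$ rather than any homotopy-invariant characterization, and to check that the transition functors of $F$ are respected on the nose. Once naturality is in hand, the final sentence of the proposition --- that $\Gamma$ is an inverse to $\chi$ after localization --- follows formally: $\chi$ is already known to be an equivalence $\L\Fun(C^{\op},\Cat_\infty) \xrightarrow{\simeq} \L(\msSet_{/\N(C)})^\circ$, and a levelwise weak equivalence $\id \Rightarrow \Gamma \circ \chi$ shows $\Gamma \circ \chi \simeq \id$ on the localization, whence $\Gamma$ is a one-sided and therefore two-sided inverse.
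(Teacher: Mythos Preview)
Your proposal is correct and follows essentially the same approach as the paper: construct $\eta$ explicitly at the level of simplices, then show $\eta_F(c)$ is an equivalence by exhibiting evaluation at the terminal object $\id_c \in C_{/c}$ as a one-sided inverse which is itself an equivalence, and conclude by two-out-of-three. The paper phrases the key step as the observation that sending all edges of $\N(C_{/c})$ to Cartesian edges is equivalent to being a relative right Kan extension along $\Delta^0 \to \N(C_{/c})$ and invokes \cite[4.3.2.15]{lurie:htt}, which is exactly your ``evaluation at a terminal object computes the limit of a Cartesian section'' repackaged.
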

\begin{proof}
	For a functor $F: C^{\op} \lra \Cat_{\infty}$ and $c \in C$, we provide a map of simplicial
	sets 
	\begin{equation}\label{eq:groth}
		\eta_F(c): F(c) \lra \Gamma(\chi(F))(c)
	\end{equation}
	which, by adjunction, can be identified with a map
	\[
			F(c) \times \N(C_{/c}) \lra \chi(F)
	\]
	of simplicial sets over $\N(C)$. We define this map by associating to an $n$-simplex
	$(x, c_0 \to c_1 \to \dots \to c_n \to c)$ of $F(c) \times \N(C_{/c})$ the $n$-simplex of
	$\chi(F)$ that is given by
	\begin{enumerate}
		\item the $n$-simplex $\sigma = c_0 \to \dots \to c_n$ in $\N(C)$ 
		\item for $I \subset [n]$, the $I$-simplex of $F(\sigma(\min(I)))$ obtained as the composite
			\[
				\Delta^I \to \Delta^n \overset{x}{\to} F(c) \to F(\sigma(\min(I))). 
			\]
	\end{enumerate}
	This association is natural in $c$ and provides the value of the transformation $\eta$ at
	$F$. 

	To show that $\eta$ is a weak equivalence, we need to show that, for every $F: C^{\op} \to
	\Cat_{\infty}$ and for every $c \in C$, the map $\eta_F(c)$ from \eqref{eq:groth} is an
	equivalence of $\infty$-categories. To this end, we note that there is a natural evaluation map
	\[
		\on{ev}_c: \Map^{\flat}_{\N(C)}(\N(C_{/c})^{\#}, \chi(F)) \to F(c)
	\]
	obtained by pullback along the map $\Delta^0 \to \N(C_{/c})$ corresponding to the
	vertex $\id: c \to c$. The composite $\on{ev}_c \circ \eta_F(c)$ equals the identity. The map
	$\on{ev}_c$ is an equivalence by \cite[4.3.2.15]{lurie:htt}, since the requirement to map all
	edges in $\N(C_{/c})$ to Cartesian edges is equivalent to being a relative right Kan
	extension along $\Delta^0 \to \N(C_{/c})$. We conclude in virtue of the two-out-of-three
	property of equivalences of $\infty$-categories.
\end{proof}

\subsubsection{$2$-categorical terminology}
\label{sec:2cat}

Let $\CC$ be a $2$-category. We denote the category of morphisms between objects $x$ and $y$ by
$\CC(x,y)$. We denote by $\CC^{(\op,-)}$ the $2$-category with 
\[
	\CC^{(\op,-)}(c,c') = \CC(c',c)
\]
and by $\CC^{(-,\op)}$ the $2$-category with 
\[
	\CC^{(-,\op)}(c,c') = \CC(c,c')^{\op}
\]
and further $\CC^{(\op,\op)} = (\CC^{(\op,-)})^{(-,\op)}$.

\begin{exa} The $2$-category $\CCat$ of small categories admits a self-duality 
	\[
		\CCat \overset{\simeq}{\lra} \CCat^{(-,\op)}, \C \mapsto \C^{\op}.
	\]
\end{exa}

We introduce $2$-categorical versions of undercategories and overcategories. Since there is
potential confusion with the orientation of the $2$-morphisms, we provide explicit descriptions of
both. For an object $c$ in a $2$-category $\CC$, we define the {\em lax undercategory} $\CC_{c/}$
with 
\begin{itemize}
	\item objects given by $1$-morphisms $c \to x$,
	\item a $1$-morphism from $\varphi:c \to x$ to $\psi:c \to y$ consists of a
		$2$-commutative triangle 
		\[
		\begin{tikzcd}[sep=2em]
			|[alias=X]|x \arrow{rr}{f} & & {y}\\
			&{c}\arrow{ur}[""{name=foo},swap]{\psi} \arrow{ul}{\varphi}  & 
			\arrow[Rightarrow, from=X, to=foo, swap, shorten <=.5cm,shorten >=.5cm]
		\end{tikzcd}
		\]
		where $f$ is a $1$-morphism in $\CC$, 
	\item a $2$-morphism $\gamma$ from $f$ to $g$ is given by a $2$-commutative diagram
		\[
		\begin{tikzcd}[sep=2em]
			|[alias=X]|x \arrow[""{name=bar},swap]{rr}{g} \arrow[""{name=gbar},bend left=50]{rr}{f}  & & {y}
			\arrow[Rightarrow, from=gbar, to=bar,swap, shorten >=.2cm, shorten
			<=.2cm]\\
			&{c.}\arrow[""{name=foo}, swap]{ur}{\psi} \arrow{ul}{\varphi} & 
			\arrow[Rightarrow, from=X, to=foo, swap, shorten <=.5cm,shorten >=.5cm]
		\end{tikzcd}
		\]
\end{itemize}

For every object $c$ in $\CC$, we define the {\em lax overcategory} $\CC_{/c}$ with 
\begin{itemize}
	\item objects given by $1$-morphisms $x \to c$,
	\item a $1$-morphism from $\varphi:x \to c$ to $\psi:y \to c$ consists of a
		$2$-commutative triangle 
		\[
		\begin{tikzcd}[sep=2em]
			{x} \arrow{rr}{f} \arrow[""{name=foo}]{dr}[swap]{\varphi} & & |[alias=Y]|y
			\arrow{dl}{\psi}\\
			& {c} & 
			\arrow[Rightarrow, from=Y, to=foo, swap, shorten <=.5cm,shorten >=.2cm]
		\end{tikzcd}
		\]
		where $f$ is a $1$-morphism in $\CC$, 
	\item a $2$-morphism $\gamma$ from $f$ to $g$ is given by a $2$-commutative diagram
		\[
		\begin{tikzcd}[sep=2em]
			{x} \arrow[""{name=bar},swap]{rr}{g} \arrow[""{name=gbar},bend left=50]{rr}{f} \arrow[""{name=foo}]{dr}[swap]{\varphi} & & |[alias=Y]|y
			\arrow{dl}{\psi}
			\arrow[Rightarrow, from=gbar, to=bar, shorten >=.2cm, shorten <=.2cm]\\
			& {c.} & 
			\arrow[Rightarrow, from=Y, to=foo, swap, shorten <=.5cm,shorten >=.2cm]
		\end{tikzcd}
		\]
\end{itemize}
	
\subsubsection{The $(\infty,2)$-categorical Grothendieck construction}
\label{subsec:2groth}

In this section, we introduce a combinatorial variant of the unstraightening functor for
$(\infty,2)$-categories introduced in \cite{lurie:2cat}. It can be applied to any strict $2$-functor
\[
	\CC \lra \Cat_{\infty}
\]
where $\CC$ is a $2$-category. It is a lax analog of the relative nerve construction, provided in
\cite{lurie:htt} as a combinatorial alternative to the straightening construction applicable to strict
functors $\C \to \Cat_{\infty}$ where $\C$ is an ordinary category. 
	
For every nonempty finite linearly ordered set $I$, we define a $2$-category $\SSigma^I$ as follows:
\begin{itemize}
	\item The set of objects of $\SSigma^I$ is $I$,
	\item The category $\SSigma^I(i,j)$ of morphisms between objects $i$ and $j$ is
		the poset consisting of those subsets $S \subset I$ satisfying $\min S = i$ and $\max S = j$. 
	\item The composition law is given by the formula
		\[
			\SSigma^I(i,j) \times \SSigma^I(j,k) \lra \SSigma^I(i,k),\; (S, S') \mapsto S \cup S'.
		\]
\end{itemize}
The various $2$-categories $\SSigma^{[n]}$, $n \ge 0$, assemble to a functor
\begin{equation}\label{eq:u2}
		\Delta \lra \Cat_2
\end{equation}
into the category $\Cat_2$ of $2$-categories. 

\begin{defi}
Let $\CC$ be a $2$-category. We define the nerve $\Nsc(\CC)$ of $\CC$ to be the scaled simplicial
set with
\[
	\Nsc(\CC)_n = \Fun(\SSigma^{[n]}, \CC)
\]
with functoriality in $n$ provided by \eqref{eq:u2}. The thin $2$-simplices are the ones that
correspond to invertible natural transformations.
\end{defi}

For $n \ge 0$, and $\emptyset \neq I \subset [n]$, we denote by $G(I)$, the $1$-category obtained
from the $2$-category
\[
	((\SSigma^I)^{(-,\op)})_{\min(I)/} 
\]
by discarding all $2$-morphisms. For $\emptyset \neq I \subset J$, we have a pullback functor 
\begin{equation}\label{eq:2simplex}
		\SSigma^J(\min(I),\min(J))^{\op} \times G(J) \lra G(I).
\end{equation}

\begin{defi}
Let 
\[
	F: \CC^{(\op,\op)} \lra \Cat_{\infty}
\]
be a $\Cat_{\infty}$-enriched functor where we interpret $\CC^{(\op, \op)}$ as $\Cat_{\infty}$-enriched via
passage to nerves of the morphism categories. We introduce a simplicial set $\CChi(F)$, called the
{\em lax Grothendieck construction} of $F$ as follows: An $n$-simplex in $\CChi(F)$ consists of
\begin{enumerate}
	\item a functor $\sigma: \SSigma^n \to \CC$ of $2$-categories, 
	\item for every $\emptyset \ne I \subset [n]$, a functor $\N(G(I)) \to F(\sigma(\min(I)))$ so
		that, for every $ \emptyset \ne I
		\subset J \subset [n]$, the diagram
		\[
		\begin{tikzcd}
			\N(\SSigma^J(\min(I),\min(J))^{\op}) \times \N(G(J))
			\arrow{r}\arrow{d} & \N(G(I))\arrow{d}\\
			\N(\CC(\sigma(\min(I)),\sigma(\min(J)))^{\op}) \times F(\sigma(\min(J))) \arrow{r} &
			F(\sigma(\min(I)))
		\end{tikzcd}
		\]
		commutes where the top row is obtained from \eqref{eq:2simplex} by passing to
		nerves.
\end{enumerate}
By construction, the lax Grothendieck construction comes equipped with a forgetful
functor $\pi: \CChi(F) \to \Nsc(\CC)$. We further introduce a marking on $\CChi(F)$ consisting of
the $\pi$-Cartesian edges so that we have $\CChi(F) \in \msSet_{/\Nsc(\CC)}$.
\end{defi}

We summarize some basic properties whose, in parts somewhat technical, proofs are deferred to
\cite{d:grothendieck}.

\begin{prop}\label{prop:laxbasic} Let $\CC$ be a $2$-category, and let
	\[
		F: \CC^{(\op,\op)} \lra \Cat_{\infty}
	\]
	be a $\Cat_{\infty}$-enriched functor. Then
	\begin{enumerate}
		\item The lax Grothendieck construction $\pi: \CChi(F) \to \Nsc(\CC)$ is a locally
			Cartesian fibration which is Cartesian over every thin $2$-simplex of
			$\Nsc(\CC)$. 
		\item For every $\Cat$-enriched functor $\DD \to \CC$, we have
			\[
				\CChi(F) \times_{\Nsc(\CC)} \Nsc(\DD) \cong \CChi(F|\DD).
			\]
		\item \label{prop:laxbasic:3} Suppose that $\CC$ is a $2$-category with discrete morphism categories which
			we may therefore identify with a $1$-category. Then restriction along the
			functor 
			\[
				G(I) \to I, (\min(I) \to j) \mapsto j
			\]
			induces a map
			\begin{equation}\label{eq:ordlax}
					\chi(F) \lra \CChi(F)
			\end{equation}
			between the ordinary and lax Grothendieck constructions which is a fiberwise
			equivalence of Cartesian fibrations.
	\end{enumerate}
\end{prop}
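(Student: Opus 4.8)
The plan is to unwind both sides of the claimed identification \eqref{eq:ordlax} fiber by fiber, using the explicit definitions of $\chi(F)$ and $\CChi(F)$ given above, and then to verify that the comparison map is a fiberwise categorical equivalence by exhibiting an explicit homotopy inverse on each fiber. First I would observe that, since $\CC$ has discrete morphism categories, the $2$-category $\SSigma^I$ has, for each pair $i \le j$, the poset of subsets $S \subset I$ with $\min S = i$, $\max S = j$ as its morphism category; a $2$-functor $\SSigma^{[n]} \to \CC$ is then the same datum as an $n$-simplex $\sigma \colon [n] \to \CC$ of the nerve of the $1$-category $\CC$ — this is exactly the first piece of data in an $n$-simplex of $\chi(F)$ as well. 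So on the level of data indexed by $\N(\CC)$, both constructions agree, and the content is entirely about the second piece of data.

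Next I would analyze the category $G(I)$. By definition $G(I)$ is obtained from $((\SSigma^I)^{(-,\op)})_{\min(I)/}$ by discarding $2$-morphisms; its objects are $1$-morphisms $\min(I) \to j$ in $\SSigma^I$, i.e.\ subsets $S \subset I$ with $\min S = \min(I)$, and its morphisms are built from the $2$-commutativity data in the lax undercategory. The key point is that the functor $G(I) \to I$, $(\min(I) \to j) \mapsto j = \max S$, admits a section $j \mapsto \{\min(I), \dots, j\}$... more precisely, one checks that $G(I) \to I$ exhibits $I$ as a localization, or better, that restriction along it is fully faithful and essentially surjective after applying $F$ and taking nerves. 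Concretely, for each $I$ the square defining an $n$-simplex of $\CChi(F)$ becomes, after restricting along $G(I) \to I$, exactly the coherence square defining an $n$-simplex of $\chi(F)$; so \eqref{eq:ordlax} is well-defined, and I would then show it is a bijection on $n$-simplices after the standard cofinality/Kan-extension argument, or — since we only need a fiberwise equivalence, not an isomorphism — show that the induced map on fibers over a fixed $\sigma$ is the map of $\infty$-categories obtained by restricting $\Fun(\N(G(I)), F(\sigma(\min I)))$-type data along $\N(I) \to \N(G(I))$, which is an equivalence because $\N(I) \to \N(G(I))$ is a deformation retract of simplicial sets: $G(I)$ has $I$ as a reflective (or coreflective) subcategory via the maximum map, so its nerve is homotopy equivalent to $\N(I)$, and precomposition is a homotopy equivalence on functor $\infty$-categories.

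Finally I would assemble these fiberwise statements: the map \eqref{eq:ordlax} is a map in $\msSet_{/\Nsc(\CC)}$ (note $\Nsc(\CC) \to \N(\CC)$ is compatible since morphism categories are discrete, so thin scaling is automatic), it is compatible with the Cartesian markings by inspection (a Cartesian edge on either side is detected by the underlying $1$-simplex of $\N(\CC)$ together with the requirement that the corresponding functor on the $\Delta^1$-indexed data be an equivalence), and it is a fiberwise equivalence of $\infty$-categories by the previous paragraph. Hence it is a fiberwise equivalence of (locally) Cartesian fibrations, as claimed. The main obstacle I anticipate is the bookkeeping in the second step: carefully identifying the homotopy type of $\N(G(I))$ relative to $\N(I)$ and checking that the adjunction $G(I) \rightleftarrows I$ is compatible, for varying $I \subset J$, with the pullback functors \eqref{eq:2simplex} — i.e.\ that the homotopy inverses can be chosen coherently across all $I$ so that they really do glue to a simplicial map inverse to \eqref{eq:ordlax} up to the relevant localization. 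Everything else is a direct unwinding of definitions.
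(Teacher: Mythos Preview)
The paper does not actually prove Proposition~\ref{prop:laxbasic}: immediately before stating it, the author writes that these are ``basic properties whose, in parts somewhat technical, proofs are deferred to \cite{d:grothendieck}.'' There is therefore no proof in this paper against which to compare your proposal.

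For what it is worth, your outline is the natural one and is plausibly close to what the deferred argument does. One point deserves tightening: your claim that $G(I) \rightleftarrows I$ is a (co)reflective adjunction is not literally correct. The category $G(I)$ is not a poset once $|I| \ge 3$ --- a morphism $S \to S'$ records a choice of intermediate subset $T$ with $\min T = \max S$, $\max T = \max S'$ and $S' \subset T \cup S$, and there can be several such $T$ --- so the section you write down is not canonically a functor, and the hom-set bijection needed for an adjunction fails. What your argument really requires is only that $\N(G(I)) \to \Delta^{I}$ be a categorical equivalence, coherently in $I \subset J$ with respect to the pullback maps \eqref{eq:2simplex}; establishing this coherence is presumably exactly the ``somewhat technical'' content the paper postpones.
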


Let $\CC$ be a $2$-category and $c$ an object of $\CC$. Consider the lax overcategory $\CC_{/c}$ as defined in
\S \ref{sec:2cat}. The forgetful functor $\CC_{/c} \to \CC$ induces a map $\Nsc(\CC_{/c}) \to
\Nsc(\CC)$ of simplicial sets. We further introduce a marking on $\Nsc(\CC_{/c})$ given by those
edges where the corresponding $2$-morphism is an isomorphism. Thus, we have $\Nsc(\CC_{/c}) \in
\msSet_{/\Nsc(\CC)}$. The functoriality of this construction in $c$ is captured by a
$\Cat_{\infty}$-enriched functor
\begin{equation}\label{eq:funct}
	\CC^{(-,\op)} \lra \msSet_{/\Nsc(\CC)},\; c \mapsto \Nsc(\CC_{/c})
\end{equation}

We now define for $X \in \msSet_{/\Nsc(\CC)}$ and $c \in \CC$, the marked simplicial set
\[
	\GGamma(X)(c) = \Map^{\#}_{\Nsc(\CC)}(\N(\CC_{/c}), X).
\]
Pulling back \eqref{eq:funct}, this construction is functorial in $c$ and defines a functor
\[
	\GGamma(X): \CC^{(\op,\op)} \lra \msSet.
\]
The additional functoriality in $X$ provides
\[
	\GGamma: \msSet_{/\Nsc(\CC)} \lra \Fun_{\msSet}(\CC^{\op}, \msSet),\; X \mapsto \GGamma(X).
\]

\begin{prop}\label{prop:2groth} Let $F: \CC^{(\op,\op)} \to \Cat_{\infty}$ be a
	$\Cat_{\infty}$-enriched functor. Then there is a natural weak equivalence
		\[
			\eta: \id \overset{\simeq}{\lra} \GGamma \circ \CChi
		\]
		of endofunctors of $\Fun(\CC^{(\op,\op)}, \Cat_{\infty})$.
\end{prop}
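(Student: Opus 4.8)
The plan is to mimic the strategy of Proposition \ref{prop:groth} from the $(\infty,1)$-categorical setting, now at the level of $2$-categories. First I would construct the transformation $\eta$ explicitly: for a $\Cat_{\infty}$-enriched functor $F: \CC^{(\op,\op)} \to \Cat_{\infty}$ and an object $c \in \CC$, I need a map of simplicial sets
\[
	\eta_F(c): F(c) \lra \GGamma(\CChi(F))(c) = \Map^{\#}_{\Nsc(\CC)}(\Nsc(\CC_{/c}), \CChi(F)).
\]
By the adjunction defining $\Map^{\#}$, this is the same datum as a map $F(c) \times \Nsc(\CC_{/c}) \to \CChi(F)$ of marked simplicial sets over $\Nsc(\CC)$, and I would write this down on $n$-simplices. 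An $n$-simplex of $F(c) \times \Nsc(\CC_{/c})$ consists of an object $x$ in $F(c)$ (viewed through $\Delta^n \to \Delta^0 \to F(c)$, or more precisely an $n$-simplex of $F(c)$) together with a $2$-functor $\SSigma^n \to \CC_{/c}$; the latter unpacks to a $2$-functor $\sigma: \SSigma^n \to \CC$ equipped with a lax cocone to $c$, i.e.\ compatible $1$-morphisms $\sigma(i) \to c$ and $2$-cells. From this I produce the $n$-simplex of $\CChi(F)$ whose underlying $2$-functor is $\sigma$ and whose $I$-component, for $\emptyset \neq I \subset [n]$, is the composite $\N(G(I)) \to \N(\SSigma^I(\min I, \ast)) \text{-type data} \to F(c) \to F(\sigma(\min I))$, where the last functor is $F$ applied to the structure $1$-morphism $\sigma(\min I) \to c$ (using $\CC^{(\op,\op)}$-functoriality, so that a morphism into $c$ contravariantly gives a functor out of $F(c)$), and the middle map repackages the lax cocone data indexed by $G(I)$. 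The compatibility of these over $I \subset J$ is exactly the commuting square in the definition of $\CChi$, and it holds because the cocone $2$-cells are coherent; naturality in $c$ is built in.

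Next I would verify that $\eta$ lands in the correctly marked part and is natural in $F$; these are formal once the construction respects Cartesian edges, which it does because the lax cocone components over $\CC_{/c}$ with invertible $2$-cells are sent to $\pi$-Cartesian edges of $\CChi(F)$ by Proposition \ref{prop:laxbasic}(1). The heart of the argument is showing $\eta_F(c)$ is an equivalence of $\infty$-categories for every $F$ and every $c$. Here I would reuse the trick from Proposition \ref{prop:groth}: there is an evaluation map
\[
	\ev_c: \Map^{\#}_{\Nsc(\CC)}(\Nsc(\CC_{/c}), \CChi(F)) \lra F(c)
\]
obtained by restricting along the vertex $\id_c \in \CC_{/c}$, and by construction $\ev_c \circ \eta_F(c) = \id_{F(c)}$. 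So it suffices to prove $\ev_c$ is an equivalence, and then conclude by two-out-of-three. To show $\ev_c$ is an equivalence I would identify the source as a space of relative right Kan extensions: the marked mapping space $\Map^{\#}_{\Nsc(\CC)}(\Nsc(\CC_{/c}), \CChi(F))$ is the $\infty$-category of sections of the locally Cartesian fibration $\pi: \CChi(F) \to \Nsc(\CC)$ over $\Nsc(\CC_{/c})$ that carry the marked (i.e.\ invertible-$2$-cell) edges to $\pi$-Cartesian edges. The object $\id_c$ is initial in the relevant sense inside $\CC_{/c}$ — more precisely the inclusion $\{\id_c\} \hookrightarrow \Nsc(\CC_{/c})$ should be shown to be "right cofinal for Cartesian sections," so that such a section is freely determined (up to contractible choice) by its value at $\id_c$, exactly as in the $1$-categorical case where \cite[4.3.2.15]{lurie:htt} was invoked. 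This is the step where I would lean on the technical input deferred to \cite{d:grothendieck}, presumably a $2$-categorical analogue of the statement that mapping all edges of an over-category into Cartesian edges is equivalent to being a relative right Kan extension along the inclusion of the terminal object.

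The main obstacle I anticipate is precisely this last point: the lax overcategory $\CC_{/c}$ is genuinely lax (its $1$-morphisms carry non-invertible $2$-cells), so $\id_c$ is not literally a terminal object of $\CC_{/c}$ as a $2$-category, and one must be careful about which $2$-cells are marked and in which direction. One has to check that a $\pi$-Cartesian section over $\Nsc(\CC_{/c})$ is nonetheless right Kan extended from its restriction to $\{\id_c\}$ — the orientation conventions for the lax overcategory (recorded in \S \ref{sec:2cat}) are chosen exactly so this works, with the structure $2$-cells pointing the right way for the Cartesian-edge condition to force the extension. I would handle this by a Reedy-type/filtration argument over the skeleta of $\Nsc(\CC_{/c})$, reducing to the assertion that for each object $\varphi: x \to c$ the mapping $2$-category $\SSigma$-diagram has an initial-like piece witnessed by the composite with $\varphi$, and citing the fiberwise-equivalence and locally-Cartesian properties from Proposition \ref{prop:laxbasic} to propagate equivalences. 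Once $\ev_c$ is known to be an equivalence, the proof closes formally.
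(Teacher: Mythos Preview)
Your proposal matches the paper's argument in outline: construct $\eta_F$ explicitly as a map $F(c)\times\Nsc(\CC_{/c})\to\CChi(F)$ over $\Nsc(\CC)$, introduce the evaluation map at $\id_c$, and conclude by two-out-of-three. Two small points of divergence are worth noting. First, in the paper the evaluation map lands in the fiber $\CChi(F)_c$ rather than in $F(c)$, and the composite $\ev(c)\circ\eta_F(c)$ is not the identity on $F(c)$ but the comparison map $\chi(F)_c\to\CChi(F)_c$ of Proposition~\ref{prop:laxbasic}\ref{prop:laxbasic:3}; that proposition then supplies the equivalence. Second, for the equivalence of $\ev(c)$ the paper does not run a Reedy/Kan-extension argument but instead cites \cite[4.1.8]{lurie:2cat} to show that (the opposite of) the inclusion $\{\id_c\}\hookrightarrow\Nsc(\CC_{/c})$ is $\mathfrak{P}_{\Nsc(\CC)^{\op}}$-anodyne, which immediately gives the result. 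Your proposed hands-on filtration would amount to reproving that anodyne statement, so the paper's route is more economical; otherwise your plan is correct.
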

\begin{proof} 
	The argument is similar to the proof of Proposition \ref{prop:groth}: For every $F:
	\CC^{(\op,\op)} \to \Cat_{\infty}$, there is an explicit natural map
	\[
		\eta_F: F \lra \GGamma \circ \CChi(F)
	\]
	constructed as follows: For every $c \in \CC$, we have to provide a map of simplicial sets
	\[
		F(c) \times \Nsc(\CC_{/c}) \lra \CChi(F)
	\]
	over $\Nsc(\CC)$. Let $(\sigma,\tau)$ be an $n$-simplex of $F(c) \times \Nsc(\CC_{/c})$, and
	denote by $\alpha$ the $n$-simplex in $\Nsc(\CC)$ obtained by postcomposing $\tau$ with the
	map $\Nsc(\CC_{/c}) \to \Nsc(\CC)$. Note that for $I \subset [n]$, we have an induced functor of categories
	\[
		(\SSigma^I)^{(-,\op)}_{\min(I)/} \lra \CC(\alpha(\min(I)), c)^{\op}, (\min(I) \to j) \mapsto
		\alpha(\min(I)) \to c
	\]
	given by postcomposition with the given map $\alpha(j) \to c$. Further, the
	$\Cat_{\infty}$-enriched functor $F$ provides a map 
	\[
		\N(\CC(\alpha(\min(I)), c)^{\op}) \times F(c) \lra F(\alpha(\min(I)))
	\]
	evaluation at $\sigma$ then provides a map
	\[
		\N((\SSigma^I)^{(-,\op)}_{\min(I)/}) \lra F(\alpha(\min(I)))
	\]
	via pullback of the image of $\sigma$ in $F(\alpha(\min(I)))$ along the map $[k]
	\to I \subset [n]$ corresponding to a $k$-simplex in $\N((\SSigma^I)^{(-,\op)}_{\min(I)/})$.
	The collection of these maps for the various nonempty subsets $I \subset [n]$ defines the
	desired $n$-simplex in $\CChi(F)$.

	To see that the resulting map $F \lra \GGamma \circ \CChi$ is a weak equivalence, we argue
	as follows. For every object $c \in \CC$, we consider the evalutation map 
	\[
		\on{ev}(c): \GGamma \circ \CChi(F)(c) = \Map^{\#}_{\Nsc(\CC)}(\N(\CC_{/c}), \CChi(F)) \lra
		\CChi(F)_{c}
	\]
	given by pullback along $\{c \overset{\id}{\to} c\} \to \N(\CC_{/c})$. By the argument of
	\cite[4.1.8]{lurie:2cat}, the opposite of this map is ${\mathfrak
	P}_{\Nsc(\CC)^{\op}}$-anodyne, so that $\on{ev}(c)$ is an equivalence of
	$\infty$-categories. Further, the composite $\on{ev}(c) \circ \eta_F(c)$ as the map
	$\chi(F)_c \to \CChi(F)_c$ induced by \eqref{eq:ordlax} on fibers over $c$. Hence, by
	Proposition \ref{prop:laxbasic}\ref{prop:laxbasic:3}, it is an equivalence of $\infty$-categories. We conclude
	by two-out-of-three.
\end{proof}

\subsection{The categorified normalized chains functor $\Cc$}

We provide the definition of the functor $\Cc$ from Theorem \ref{thm:catdk} whose construction is
a mutatis mutandis modification of the normalized chains functor appearing in the classical Dold-Kan
correspondence.

\begin{defi}
Given a $2$-simplicial stable $\infty$-category $\A_{\bullet}$, we define, for $n \ge 0$, the stable
$\infty$-category
\[
	\overline{\A}_n \subset \A_n
\]
as the full subcategory spanned by those vertices $a$ of $\A_n$ such that, for every $1 \le i \le n$,
the object $d_i(a)$ is a zero object in $\A_{n-1}$. As a result of the relations among the face maps in
$\Delta$, the various functors $d_0: \A_n \to \A_{n-1}$, $n > 0$, restrict to define a chain
complex
\[
	\overline{\A}_0 \overset{d_0}{\lla} \overline{\A}_1 \overset{d_0}{\lla} \overline{\A}_2
	\overset{d_0}{\lla} \; \cdots
\]
of stable $\infty$-categories. We denote this chain complex by $\Cc(\A_{\bullet})$ and refer to it
as the {\em categorified normalized chain complex} associated to $\A_{\bullet}$. 
\end{defi}

The construction $\A_{\bullet} \mapsto \Cc(\A_{\bullet})$ on objects extends to define a functor
\[
	\Cc: \St_{\DDelta} \lra \Ch_{\ge 0}(\St),\; \A_{\bullet} \mapsto (\overline{\A}_{\bullet},
	d_0)
\]
which preserves weak equivalences.

\subsection{The categorified Dold-Kan nerve $\Nc$}

We provide a definition of the functor $\Nc$ from Theorem \ref{thm:catdk}. It can be regarded as a
categorification of the classical Dold-Kan nerve appearing in \S \ref{sec:dk}.

Let $n \ge 0$. We denote by $\NN_{/[n]}$ the following lax version of the comma category of $[n] \in
\DDelta$ with respect to the embedding $\NN \to \DDelta^{(-,\op)}$:
\begin{itemize}
	\item The objects of $\NN_{/[n]}$ are given by morphisms $\varphi: [m] \to [n]$ in
		$\DDelta$. 
	\item A morphism from $\varphi:[m] \to [n]$ to $\varphi':[m'] \to [n]$ consists of a
		$2$-commutative triangle 
		\[
		\begin{tikzcd}[sep=2em]
			{[m]} \arrow{rr}{f} \arrow[""{name=foo}]{dr}[swap]{\varphi} & & {[m']}
			\arrow{dl}{\varphi'}
			\arrow[Rightarrow, from=foo, swap, near start, "\eta", shorten >=.7cm, shorten <=.2cm]\\
			& {[n]} & 
		\end{tikzcd}
		\]
		where $f$ is the image of a morphism in $\NN$, i.e., an iteration of face maps
		$\partial_0$.
\end{itemize}
We equip the category $\NN_{/[n]}$ with the forgetful functor to $\NN$ 
and thus obtain a $\Cat$-enriched functor
\begin{equation}\label{eq:2over}
	\DDelta^{(-,\op)} \lra \Cat_{/\NN},\; [n] \mapsto \NN_{/[n]}.
\end{equation}

\begin{rem} The category $\NN_{/[n]}$ is in fact a poset. 
\end{rem}

\begin{defi}\label{defi:cube}
	Let $k \ge 0$. We define corresponding cubes in the poset $\NN_{/[n]}$ by specifying the
	images of the vertices:
\begin{enumerate}
	\item $f: [1]^k \to \NN_{/[k]}$ with $f(j_1,\dots,j_k)$ given by
		\[
			[k] \to [k], \; i \mapsto i - 1 + j_i,
		\]
		where we set $j_0 = 1$,
	\item $b: [1]^k \to \NN_{/[k]}$ with $b(j_1,\dots,j_k)$ given by
		\[
			[k-1] \to [k], \; i \mapsto i + j_{i+1}
		\]
		so that $b = \partial_0^* f$, 
	\item and $q: [1]^{k+1} \to \NN_{/[k]}$ with
		\[
			q(j_0,\dots,j_k) = \begin{cases} b(j_1,\dots,j_k) & \text{if $j_0 = 0$,}\\
							f(j_1,\dots,j_k) & \text{if $j_0 = 1$.}
						\end{cases}
		\]
\end{enumerate}
\end{defi}

\begin{exa} For $k=2$, the cube $q$ may be depicted as
		\[
		\begin{tikzcd}[
		    ,column sep={2.5em,between origins}
		    ,row sep={2.5em,between origins}
		    ]
		    01 \arrow{rr}\arrow{dd}\arrow{dr} & & 02 \arrow{dd}\arrow{dr} & \\
		    & 001 \arrow[crossing over]{rr} & & 002 \arrow{dd} \\
		    11 \arrow{rr}\arrow{dr} & & 12 \arrow{dr} & \\
		    & 011 \arrow{rr}\arrow[<-,crossing over]{uu} & & 012.
		\end{tikzcd}
		\]
\end{exa}

\begin{defi}\label{defi:catnerve} Let $\B_{\bullet}: \NN^{\op} \to \St$ be a chain complex of stable
	$\infty$-categories with corresponding Grothendieck construction (\S
	\ref{subsec:grothendieck})
	\[
		\pi: \chi(\B_{\bullet}) \lra \N(\NN).
	\]
	We define, for every $n \ge 0$, the $\infty$-category
	\[
		\Nc(\B_{\bullet})_n \subset \Map_{\N(\NN)}(\N(\NN_{/[n]}), \chi(\B_{\bullet}))
	\]
	as the full subcategory spanned by the diagrams 
	\[
		A: \N(\NN_{/[n]}) \lra \chi(\B_{\bullet})	
	\]
	that satisfy the following conditions:
	\begin{enumerate}[label=(N\arabic*)]
		\item\label{N1} For every $k \ge 1$ and every degenerate $k$-simplex $\tau: [k] \to [n]$ of $\Delta^n$, the object
			$A_\tau$ is a zero object in the $\infty$-category $\B_k$.
		\item\label{N2} For every $k \ge 1$ and every nondegenerate $k$-simplex $\sigma: [k] \to [n]$, the corresponding
			cube 
			\[
				A|q^*\sigma: (\Delta^1)^{k+1} \lra \chi(\B_{\bullet}),
			\]
			obtained by restricting $A$ to the pullback of $\sigma$ along the canonical
			cube from Definition \ref{defi:cube}, is a $\pi$-limit diagram with limit
			vertex $(0,0,\dots,0)$.
	\end{enumerate}
	It follows from \cite[5.1.2.2]{lurie:htt} that the $\infty$-category
	$\Map_{\N(\NN)}(\N(\NN_{/[n]}), \chi(\B_{\bullet}))$ and, since limits commute with limits,
	the subcategory $\Nc(\B_{\bullet})_n$ is stable as well. The various stable
	$\infty$-categories $\Nc(\B_{\bullet})_n$ assemble to define a $2$-simplicial stable
	$\infty$-category with $2$-functoriality determined by \eqref{eq:2over}. We refer to 
	\[
		\Nc(\B_{\bullet}) \in \St_{\DDelta}
	\]
	as the {\em categorified Dold-Kan nerve} of $\B_{\bullet}$.
\end{defi}

To gain familiarity with the categorified Dold-Kan nerve, we provide an explicit description of the
low-dimensional simplices of $\Nc(\A_{\bullet})$ for a given chain complex $\B_{\bullet}$ of stable
$\infty$-categories:
\begin{enumerate}
		  \setcounter{enumi}{-1}
	\item We have $\Nc(\B_{\bullet})_0 \cong \B_0$.
	\item\label{list:1}  The $\infty$-category of $1$-simplices of $\Nc(\B_{\bullet})$ is equivalent to the
		$\infty$-category of diagrams of the form
		\[
		\begin{tikzcd}[
		    ,column sep={2.5em,between origins}
		    ,row sep={2.5em,between origins}
		    ]
		    A_0 \arrow{rr}\arrow{dd}& & A_1 \arrow{dd}\\
		    &&\\
		    A_{00} \arrow{rr}&& A_{01}
		\end{tikzcd}
		\]
		in $\chi(\B_{\bullet})$ where 
		\begin{enumerate}
			\item $\{A_i\}$ are objects of $\B_0$, $\{A_{ij}\}$ are
		objects of $\B_1$, 
	\item $A_{00}$ is a zero object in $\B_1$, 
			\item the square 
				\[
				\begin{tikzcd}[
				    ,column sep={2.5em,between origins}
				    ,row sep={2.5em,between origins}
				    ]
				    A_0 \arrow{rr}\arrow{dd}& & A_1 \arrow{dd}\\
				    &&\\
				    d(A_{00}) \arrow{rr}&& d(A_{01})
				\end{tikzcd}
				\]
				in $\B_0$ induced by $A$ is biCartesian so that it exhibits the object
				$d(A_{01})$ as the cofiber of the map $A_0 \to A_1$.
			\end{enumerate}
			\item The $\infty$-category $\Nc(\B_{\bullet})_2$ is equivalent to the
				$\infty$-category of diagrams in $\chi(\B_{\bullet})$ of the form
				\[
				\begin{tikzcd}[
				    ,column sep={2.5em,between origins}
				    ,row sep={2.5em,between origins}
				    ]
				    A_0 \arrow{rr}\arrow{dd}& & A_1 \arrow{dd}\arrow{rr} & & A_2 \arrow{dd} &\\
				    && && &\\
				    A_{00}\arrow{rr} & & A_{01} \arrow{rr}\arrow{dd}\arrow{dr} & & A_{02} \arrow{dd}\arrow{dr} & \\
								& & & A_{001} \arrow[crossing over]{rr} & & A_{002} \arrow{dd} \\
								& & A_{11} \arrow{rr}\arrow{dr} & & A_{12} \arrow{dr} & \\
								& & & A_{011} \arrow{rr}\arrow[<-,crossing over]{uu} & & A_{012}
				\end{tikzcd}
				\]
				with 
				\begin{enumerate}
					\item $\{A_i\}$ objects of $\B_0$, $\{A_{ij}\}$ objects of $\B_1$, $\{A_{ijk}\}$
						objects of $\B_2$,
					\item $A_{00}$ and $A_{11}$ are zero objects in $\B_1$, 
						$A_{001}$, $A_{011}$, and $A_{002}$ are zero objects in $\B_2$,
					\item the diagram $A$ exhibits the objects $d(A_{01})$, $d(A_{02})$, and $d(A_{12})$ as 
						cofibers of the morphisms $A_0 \to A_1$, $A_0 \to A_2$, and $A_1 \to
						A_2$, respectively, as detailed in \ref{list:1}. In particular, by the octahedral lemma, the
						square
							\[
							\begin{tikzcd}[
							    ,column sep={2.5em,between origins}
							    ,row sep={2.5em,between origins}
							    ]
							    d(A_{01}) \arrow{rr}\arrow{dd}& & d(A_{02}) \arrow{dd}\\
							    &&\\
							    d(A_{11}) \arrow{rr}&& d(A_{12})
							\end{tikzcd}
							\]
						is biCartesian.
					\item The cube
						\[
						\begin{tikzcd}[
						    ,column sep={2.5em,between origins}
						    ,row sep={2.5em,between origins}
						    ]
							A_{01} \arrow{rr}\arrow{dd}\arrow{dr} & & A_{02} \arrow{dd}\arrow{dr} & \\
							   & d(A_{001}) \arrow[crossing over]{rr} & & d(A_{002}) \arrow{dd} \\  
							   A_{11} \arrow{rr}\arrow{dr}& & A_{12}\arrow{dr} \\
							  & d(A_{011}) \arrow{rr}\arrow[<-,crossing over]{uu} & & d(A_{012})
						\end{tikzcd}
						\]
						in $\B_1$ induced by the diagram $A$ is biCartesian so that it exhibits the
						object $d(A_{012})$ as a totalization of the $3$-term complex
						$A_{01} \to A_{02} \to A_{12}$.
				\end{enumerate}

			\item[(n)] Similarly, the higher-dimensional simplices of
				$\Nc(\B_{\bullet})$ consist of collections of diagrams in $\B_k$
				parametrized by the various posets of $k$-simplices of $\Delta^n$,
				together with additional compatibility data that, for every nondegenerate
				$k$-simplex $\sigma$ in $\Delta^n$, exhibits the object
				$d(A_{\sigma})$ as the totalization of a $(k+1)$-term complex with
				underlying sequence of maps
				\[
					A_{\partial_k\sigma} \to A_{\partial_{k-1}\sigma} \to \dots
					\to A_{\partial_0\sigma}.
				\]

\end{enumerate}

Our next goal is to study special cases of the categorified Dold-Kan nerve and exhibit how they
relate to previously studied constructions. 

\begin{exa}\label{exa:relativeS} Let 
	\[
		\B_{\bullet} = \B_0 \overset{d}{\lla} \B_1 \lla 0 \lla \dots
	\]
	be a $2$-term chain complex of stable $\infty$-categories concentrated in degrees $0,1$.
	Then the $\infty$-category of $n$-simplices of $\Nc(\B_{\bullet})$ consists of diagrams
	\[
	\begin{tikzcd}
		A_0 \arrow{r}\arrow{d}& A_1 \arrow{d}\arrow{r} & A_2 \arrow{d}\arrow{r} &\dots \arrow{r} & A_n \arrow{d}\\
	    A_{00}\arrow{r} & A_{01} \arrow{r}\arrow{d} & A_{02} \arrow{d} \arrow{r} & \dots
	    \arrow{r} & A_{0n} \arrow{d}\\
	    & A_{11} \arrow{r} & A_{12}\arrow{d}\arrow{r} & \ddots  & \vdots\\
	    & & A_{22} \arrow{r}& \ddots & \vdots \arrow{d}\\
					&&&& A_{nn}
	\end{tikzcd}
	\]
	in $\chi(\B_{\bullet})$ where
	\begin{itemize}
		\item $\{A_i\}$ are objects of $\B_0$, $\{A_{ij}\}$ are objects of $\B_1$,
		\item the objects $\{A_{ii}\}$ are zero objects in $\B_1$,
		\item for every $0 \le i < j \le n$, the diagram
			\[
			\begin{tikzcd}
				A_i \arrow{r}\arrow{d}& A_j \arrow{d}\\
				d(A_{ii}) \arrow{r} & d(A_{ij})
			\end{tikzcd}
			\]
			in $\B_0$ induced by $A$ is biCartesian,
		\item for every $0 \le i < j < k \le n$, the diagram
			\[
			\begin{tikzcd}
				A_{ij} \arrow{r}\arrow{d}& A_{ik} \arrow{d}\\
				A_{jj} \arrow{r} & A_{jk}
			\end{tikzcd}
			\]
			in $\B_0$ induced by $A$ is biCartesian.
	\end{itemize}
	In this special case, the categorified Dold-Kan nerve $\Nc(\B_{\bullet})$ can thus be
	regarded as an $\infty$-categorical variant of Waldhausen's relative
	$S_{\bullet}$-construction associated to the functor $d: \B_1 \to \B_0$ of stable
	$\infty$-categories. In particular, for $\B_0 = 0$, we recover Waldhausen's
	$S_{\bullet}$-construction of the stable $\infty$-catgeory $\B_1$.
\end{exa}

\begin{exa} Let $\B$ be a stable $\infty$-category, $k \ge 0$, and let $\B[k]$ denote the chain
	complex of stable $\infty$-categories that has $\B$ in degree $k$ and the zero category
	$\Delta^0$ in all other degrees. We describe the categorified Dold-Kan nerve $\Nc(\B[k])$.
	Consider the cube
	\[
		b: [1]^{k+1} \lra \Fun([k],[k+1])
	\]
	from Definition \ref{defi:cube}. In dimensions $1$, $2$, and $3$, the respective
	cubes can be depicted as follows:
	\begin{center}
		\begin{minipage}{.3\textwidth}
			\[
			\begin{tikzcd}[
			    ,column sep={2.5em,between origins}
			    ,row sep={2.5em,between origins}
			    ]
			    {0} \arrow{rr} && {1}
			    \end{tikzcd}
			\]
		\end{minipage} 
		\begin{minipage}{.3\textwidth}
		\[
		\begin{tikzcd}[
		    ,column sep={2.5em,between origins}
		    ,row sep={2.5em,between origins}
		    ]
		    01 \arrow{rr}\arrow{dd}& & 02 \arrow{dd}\\
		    &&\\
		    11 \arrow{rr}&& 12 
		\end{tikzcd}
		\]
		\end{minipage} 
		\begin{minipage}{.3\textwidth}
		\[
		\begin{tikzcd}[
		    ,column sep={2.5em,between origins}
		    ,row sep={2.5em,between origins}
		    ]
		    012 \arrow{rr}\arrow{dd}\arrow{dr} & & 013 \arrow{dd}\arrow{dr} & \\
		    & 112 \arrow[crossing over]{rr} & & 113 \arrow{dd} \\
		    022 \arrow{rr}\arrow{dr} & & 023 \arrow{dr} & \\
		    & 122 \arrow{rr}\arrow[<-,crossing over]{uu} & & 123
		\end{tikzcd}
		\]
	\end{minipage}
\end{center}
	The $n$-simplices of $\Nc(\B[k])$ are then given by diagrams
	\[
		A: \N(\Fun([k],[n])) \lra \B
	\]
	subject to the following conditions:
	\begin{enumerate}
		\item For every degenerate $k$-simplex $\tau:[k] \to [n]$, the object $A_\tau$ is
			a zero object in $\B$.
		\item For every nondegenerate $(k+1)$-simplex $\sigma:[k+1] \to [n]$, the cube $(\sigma
			\circ b)^*A$
			is an exact cube in $\B$.
	\end{enumerate}
	As already explained in Example \ref{exa:relativeS}, $\Nc(\B[1])$ is an 
	$\infty$-categorical variant of Waldhausen's $S_{\bullet}$-construction. The simplicial
	object $\Nc(\B[2])$ can be regarded as an $\infty$-categorical version of the
	$S^{2,1}_{\bullet}$-construction due to Hesselholt-Madsen \cite{hesselholt-madsen}. For $k
	\ge 3$, the simplicial object underlying $\Nc(\B[k])$ is an $\infty$-categorical version of
	the $k$-dimensional $S^{\langle k \rangle}_{\bullet}$-construction as recently introduced
	and studied for abelian categories in \cite{poguntke:higher}. 
\end{exa}

\section{Proof of Theorem \ref{thm:catdk}}

Our strategy for the proof of Theorem \ref{thm:catdk} is to produce a step-by-step categorification
of the proof of the classical Dold-Kan correspondence presented in \S \ref{sec:dk}. 

\subsection{The equivalence $\Cc \circ \Nc \simeq \id$}

\begin{prop}\label{prop:compgroth} There is a natural weak equivalence
	\[
		\Cc \circ \Nc \overset{\simeq}{\lra} \Gamma \circ \chi
	\]
	where $(\Gamma, \chi)$ denote the functors comprising the Grothendieck construction from \S
	\ref{subsec:grothendieck}. 
\end{prop}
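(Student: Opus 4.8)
The plan is to directly compare the two constructions $\Cc \circ \Nc$ and $\Gamma \circ \chi$ on the level of explicit models, term by term in simplicial degree, and exhibit a natural zig-zag (or better, a strictly natural map) that is a levelwise equivalence of stable $\infty$-categories. Recall that $\Gamma(\chi(\B_\bullet))$ is the functor $\NN^{\op} \to \Cat_\infty$ sending $m \mapsto \Map^{\flat}_{\N(\NN)}(\N(\NN_{/m})^{\#}, \chi(\B_\bullet))$, whereas $\Cc(\Nc(\B_\bullet))_n = \overline{\Nc(\B_\bullet)}_n$ is the full subcategory of $\Nc(\B_\bullet)_n \subset \Map_{\N(\NN)}(\N(\NN_{/[n]}), \chi(\B_\bullet))$ cut out by the vanishing conditions \ref{N1}, \ref{N2} together with the normalization condition $d_i(A) \simeq 0$ for $1 \le i \le n$. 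So on both sides one has $\infty$-categories of diagrams out of comma-type posets into $\chi(\B_\bullet)$ over $\N(\NN)$, and the comparison is a matter of identifying the relevant indexing posets and conditions.

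First I would unwind what the normalization constraints do to a diagram $A: \N(\NN_{/[n]}) \to \chi(\B_\bullet)$ lying in $\Nc(\B_\bullet)_n$: by \ref{N1} every diagonal/degenerate simplex $A_\tau$ is zero, and the biCartesianness conditions \ref{N2} then force every $A_\sigma$ with $\sigma$ nondegenerate to be determined — up to canonical equivalence — by the chain $A_0 \to A_1 \to \dots \to A_n$ in $\B_0$ together with the single object $A_{\partial_0^n[n]}$ (the ``leading'' simplex not hit by any $\partial_i$, $i>0$, composition), exactly as in the classical counit computation in the proof of Theorem \ref{thm:cdk}. Concretely, the additional normalization $d_i \simeq 0$, $i \ge 1$, forces $A$ to be a relative right Kan extension of its restriction to the sub-poset $\NN \subset \NN_{/[n]}$ of objects $\partial_0^j : [m] \hookrightarrow [n]$ that are iterated $\partial_0$'s — that is, exactly the objects of the form recorded by the comma category $\NN_{/m}$ relevant to $\Gamma$. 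This is the categorical shadow of the classical fact that a normalized chain $\{b_\sigma\}$ is determined by $b_{\id}$ and $b_{\partial_0}$.

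Next I would produce the natural map. Using Proposition \ref{prop:groth} (the classical $(\infty,1)$-Grothendieck $\eta: \id \to \Gamma \circ \chi$ is irrelevant here, but the same style of argument applies): I would build, for each $n$, a natural restriction/evaluation functor $\Cc(\Nc(\B_\bullet))_n \to \Gamma(\chi(\B_\bullet))_n = \Map^{\flat}_{\N(\NN)}(\N(\NN_{/n})^{\#}, \chi(\B_\bullet))$ by restricting a normalized diagram $A$ along the inclusion of the appropriate sub-poset, or conversely a right-Kan-extension functor in the other direction; one of the two directions will be manifestly strictly natural in $\B_\bullet$ and compatible with the $d_0$ differentials on both sides, hence assembles to a morphism of chain complexes of stable $\infty$-categories, i.e. a natural transformation of functors $\St_\DDelta \to \Ch_{\ge 0}(\St)$ — wait, of functors $\Ch_{\ge 0}(\St) \to \Ch_{\ge 0}(\St)$. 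Then I would check it is a levelwise equivalence: full faithfulness and essential surjectivity follow from the Kan-extension characterization together with \cite[4.3.2.15]{lurie:htt} (relative right Kan extension along $\Delta^0 \hookrightarrow$ the comma poset picks out the fiber), exactly as $\on{ev}_c$ was shown to be an equivalence in the proof of Proposition \ref{prop:groth}. The compatibility with the $d_0$-differentials is where one must be careful: one needs the restriction functors $\DDelta^{(-,\op)} \to \Cat_{/\NN}$, $[n] \mapsto \NN_{/[n]}$ of \eqref{eq:2over} to restrict, along $\NN \hookrightarrow \DDelta^{(-,\op)}$, to the overcategory functor $m \mapsto \NN_{/m}$ used to define $\Gamma$ — this should be an essentially formal comparison of comma categories, since the lax comma category $\NN_{/[n]}$ restricted to iterated-$\partial_0$ source maps into $[n]$ is precisely $\NN_{/n}$ once one sets $n$ equal to the relevant ordinal.

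The main obstacle I expect is the bookkeeping in step two: verifying rigorously that conditions \ref{N1}+\ref{N2}+normalization cut $\Nc(\B_\bullet)_n$ down to exactly the relative right Kan extensions from the iterated-$\partial_0$ sub-poset, and hence that restriction is an equivalence onto $\Gamma(\chi(\B_\bullet))_n$. The subtlety is that the biCartesian cube conditions \ref{N2} are about $\pi$-limit diagrams in $\chi(\B_\bullet)$ over $\N(\NN)$, so one must track both the base simplex $\sigma: \SSigma^n \to \NN$ and the fiberwise data simultaneously, and argue that the vanishing of $d_i$, $i \ge 1$, collapses the $q$-cubes so that each is automatically a $\pi$-limit diagram determined by two of its vertices. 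This is the precise categorification of the one-line classical argument ``$b_\sigma = 0$ unless $\sigma \in \{\id, \partial_0\}$''; making it rigorous in the stable $\infty$-categorical setting, with all the cubes and the Grothendieck construction in play, is the real content, but it is structurally parallel to the classical proof and to the proof of Proposition \ref{prop:groth}, so no genuinely new idea should be required beyond careful diagram chasing and an invocation of \cite[4.3.2.15]{lurie:htt}.
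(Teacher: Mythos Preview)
Your proposal is correct and takes essentially the same approach as the paper: the natural map is restriction along the inclusion $\N(\NN_{/n}) \hookrightarrow \N(\NN_{/[n]})$ of the sub-poset of iterated $\partial_0$'s, and the normalization conditions $d_i(A)\simeq 0$ for $i\ge 1$ together with \ref{N1} force $A_\sigma$ to vanish except at $\sigma=\id_{[n]}$ and $\sigma=\partial_0$, so this restriction is an equivalence. The paper's proof is exactly this but stated tersely --- it does not frame the argument via relative Kan extensions or invoke \cite[4.3.2.15]{lurie:htt}, since once all but two objects are seen to vanish the equivalence is immediate; your slight confusion about ``the chain $A_0\to\cdots\to A_n$ in $\B_0$'' (those are data of a general element of $\Nc(\B_\bullet)_n$, not of a normalized one) disappears once you impose $d_i\simeq 0$.
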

\begin{proof} Let $\B_{\bullet}$ be a chain complex of stable $\infty$-categories. For $n \ge 0$, the
	$\infty$-category $\Cc(\Nc(\B_{\bullet}))_n$ consists of diagrams
	\[
		A: \N(\NN_{/[n]}) \lra \chi(\B_{\bullet})
	\]
	subject to the conditions spelled out in Definition \ref{defi:catnerve} and additionally
	satisfying, for every $1 \le i \le n$,	
	\[
		d_i(A) \simeq 0.
	\]
	These conditions imply that the only nonzero objects in the diagram comprising $A$ are the
	ones parametrized by the $n$-simplex $\id: [n] \to [n]$ and the $(n-1)$-simplex
	$\partial_0: [n-1] \to [n]$. In particular, restriction along the functor
	\[
		\N(\NN_{/n}) \to \N(\NN_{/[n]}),
	\] 
	where $\NN_{/n}$ denotes the overcategory $0 \to 1 \to \dots \to n$ of $n$ in the poset
	$\NN$, induces an equivalence $\Cc(\Nc(\B_{\bullet}))_n \simeq \Gamma(\chi(\B_{\bullet}))_n$.
	Since this map is functorial in $n$ and $\B_{\bullet}$, we obtain a natural weak
	equivalence
	\[
		\Cc \circ \Nc \overset{\simeq}{\lra} \Gamma \circ \chi.
	\]
\end{proof}

\begin{cor} There is a natural equivalence $\Cc \circ \Nc \simeq \id$ as endofunctors of the
	$\infty$-category $\L \Ch_{\ge 0}(\St)$ of connective chain complexes of stable
	$\infty$-categories.
\end{cor}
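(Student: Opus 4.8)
The plan is to obtain this corollary purely formally, by concatenating the two natural equivalences that have just been established. Proposition~\ref{prop:compgroth} furnishes a natural weak equivalence $\Cc \circ \Nc \overset{\simeq}{\lra} \Gamma \circ \chi$, where $\Gamma$ and $\chi$ are the $(\infty,1)$-categorical Grothendieck construction functors of \S\ref{subsec:grothendieck} applied with $C = \NN$; after localization this becomes a natural equivalence of functors out of $\L\Ch_{\ge 0}(\St)$. On the other hand, Proposition~\ref{prop:groth}, applied again with $C = \NN$, provides the natural transformation $\eta: \id \lra \Gamma \circ \chi$ of endofunctors of $\Fun(\NN^{\op}, \Cat_{\infty})$, which is a levelwise weak equivalence and hence descends to a natural equivalence $\id \overset{\simeq}{\lra} \Gamma \circ \chi$ upon localization. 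Composing the equivalence of Proposition~\ref{prop:compgroth} with the inverse of $\eta$ then yields the desired natural equivalence $\Cc \circ \Nc \simeq \id$.

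First I would make sure this composite can legitimately be formed inside $\L\Ch_{\ge 0}(\St)$ and not merely inside $\L\Fun(\NN^{\op}, \Cat_{\infty})$. The endofunctor $\Cc \circ \Nc$ of $\Ch_{\ge 0}(\St)$ exists by construction, and for $\B_{\bullet} \in \Ch_{\ge 0}(\St)$ the component $\eta_{\B_{\bullet}} : \B_{\bullet} \to \Gamma(\chi(\B_{\bullet}))$ is a levelwise equivalence of $\NN^{\op}$-shaped diagrams; since both defining conditions for a chain complex of stable $\infty$-categories -- levelwise stability and the vanishing $d^2 \simeq 0$ in the relevant functor $\infty$-category -- are preserved under levelwise equivalence of diagrams, $\Gamma(\chi(\B_{\bullet}))$ again lies in $\Ch_{\ge 0}(\St)$ up to weak equivalence. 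Consequently $\Gamma \circ \chi$ restricts to an endofunctor of $\L\Ch_{\ge 0}(\St)$, the inclusion $\L\Ch_{\ge 0}(\St) \hra \L\Fun(\NN^{\op}, \Cat_{\infty})$ is fully faithful, and all three natural transformations above restrict to endofunctors of $\L\Ch_{\ge 0}(\St)$, where they compose to give $\Cc \circ \Nc \simeq \id$.

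I expect the only mildly delicate point -- and it is genuinely routine -- to be exactly this bookkeeping: verifying that localization is compatible with restriction along the full inclusion $\Ch_{\ge 0}(\St) \subset \Fun(\NN^{\op}, \Cat_{\infty})$, so that the equivalence $\eta$ of Proposition~\ref{prop:groth}, which a priori lives over $\L\Fun(\NN^{\op}, \Cat_{\infty})$, may be restricted and then composed with the equivalence of Proposition~\ref{prop:compgroth}. Once this is arranged there is nothing further to check; in particular, no additional analysis of the combinatorics of $\Nc$ or $\Cc$ is needed, since Proposition~\ref{prop:compgroth} has already reduced the whole statement to the formal properties of the $(\infty,1)$-categorical Grothendieck construction recorded in \S\ref{subsec:grothendieck}.
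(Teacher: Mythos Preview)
Your proposal is correct and follows exactly the same approach as the paper: the corollary is deduced immediately by combining the natural weak equivalence $\Cc \circ \Nc \overset{\simeq}{\to} \Gamma \circ \chi$ of Proposition~\ref{prop:compgroth} with the natural weak equivalence $\id \overset{\simeq}{\to} \Gamma \circ \chi$ of Proposition~\ref{prop:groth}. Your additional bookkeeping about restricting to $\L\Ch_{\ge 0}(\St)$ is more explicit than the paper's one-line proof, but the substance is identical.
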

\begin{proof} Immediate from Proposition \ref{prop:compgroth} and Proposition
	\ref{prop:groth}.
\end{proof}

\subsection{The equivalence $\Nc \circ \Cc \simeq \id$}

We proceed by showing $\Nc \circ \Cc \simeq \id$. To this end, we produce a zigzag diagram 
\begin{equation}\label{eq:nateq}
	\id \overset{\eta}{\lra} \GGamma \circ \CChi \overset{\alpha}{\lla} \Fc
	\overset{\beta}{\lra} \widetilde{\Nc} \circ \Cc
	\overset{\theta}{\lla} \Nc \circ \Cc
\end{equation}
of endofunctors of $\St_{\DDelta}$ and show that all maps in the diagram are weak equivalences.
The resulting equivalence $\id \simeq \Nc \circ \Cc$ of endofunctors of the
$\infty$-categorical localization $\L \St_{\DDelta}$ can be interpreted as a categorification of the unit
transformation $u: \id \to \N \circ \C$ constructed in the proof of Theorem \ref{thm:cdk}.

\subsubsection{The lax version $\widetilde{\Nc}$ of $\Nc$}

In the definition of the categorified Dold-Kan nerve of a connective chain complex $\B_{\bullet}$ of
stable $\infty$-categories, we have used the Grothendieck construction $\pi: \chi(\B_{\bullet}) \to
\N(\NN)$. We denote by $\widetilde{\Nc}(\B_{\bullet})$ the mutatis mutandis definition obtained by
using the lax Grothendieck construction $\pi: \CChi(\B_{\bullet}) \to \N(\NN)$ instead, where we interpret
$\NN$ as a $2$-category with discrete morphism categories. 

\begin{prop} There is a weak equivalence 
	\[
		\theta: \Nc \overset{\simeq}{\lra} \widetilde{\Nc}
	\]
	of functors $\Ch_{\ge 0}(\St) \to \St_{\DDelta}$.
\end{prop}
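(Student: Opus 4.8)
The plan is to construct the comparison $\theta$ directly from the fiberwise equivalence of Grothendieck constructions supplied by Proposition \ref{prop:laxbasic}\ref{prop:laxbasic:3} and then check that it preserves the defining conditions of the categorified Dold-Kan nerve. Concretely, since $\NN$ is a poset, viewing it as a $2$-category with discrete morphism categories, Proposition \ref{prop:laxbasic}\ref{prop:laxbasic:3} provides a map
\[
	\chi(\B_{\bullet}) \lra \CChi(\B_{\bullet})
\]
over $\N(\NN)$ which is a fiberwise equivalence of (locally) Cartesian fibrations; in fact, since all $2$-simplices of $\Nsc(\NN)$ are thin, $\CChi(\B_{\bullet}) \to \N(\NN)$ is an honest Cartesian fibration and the comparison map is an equivalence of Cartesian fibrations over $\N(\NN)$. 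Post-composition with this map induces, for every $n$, a functor
\[
	\Map_{\N(\NN)}(\N(\NN_{/[n]}), \chi(\B_{\bullet})) \lra \Map_{\N(\NN)}(\N(\NN_{/[n]}), \CChi(\B_{\bullet})),
\]
and the first step is to argue that this functor is an equivalence of $\infty$-categories. The second step is to verify that it restricts to an equivalence between the full subcategories cut out by conditions \ref{N1} and \ref{N2} on either side, yielding $\theta_n: \Nc(\B_{\bullet})_n \overset{\simeq}{\to} \widetilde{\Nc}(\B_{\bullet})_n$; finally one checks naturality in $[n] \in \DDelta$ (via \eqref{eq:2over}) and in $\B_{\bullet}$, so that $\theta$ assembles to a morphism in $\St_{\DDelta}$ that is a levelwise weak equivalence.

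For the first step, the key point is that mapping spaces out of a fixed simplicial set into a Cartesian fibration are insensitive, up to equivalence, to replacing the total space by an equivalent Cartesian fibration over the same base. More precisely, one works in the marked setting: $\N(\NN_{/[n]})^{\flat}$ (or with a suitable marking determined by \eqref{eq:2over}) maps to $\N(\NN)^{\#}$, and $\Map^{\flat}_{\N(\NN)}(-, -)$ takes an equivalence of fibrant objects in $\msSet_{/\N(\NN)}$ — here the map $\chi(\B_{\bullet}) \to \CChi(\B_{\bullet})$, which is a fiberwise equivalence of Cartesian fibrations and hence a weak equivalence in the Cartesian model structure on $\msSet_{/\N(\NN)}$ by \cite[3.1.3.5]{lurie:htt} — to an equivalence of $\infty$-categories. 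This gives the equivalence of ambient mapping $\infty$-categories.

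For the second step, one observes that both defining conditions are \emph{local} and stated in terms of structure preserved by the comparison map. Condition \ref{N1} only refers to objects $A_\tau$ of the fibers $\B_k$, and the comparison $\chi(\B_{\bullet}) \to \CChi(\B_{\bullet})$ is by construction the identity on fibers (or at least an equivalence identifying $\B_k$ with $\B_k$), so ``$A_\tau$ is a zero object'' is preserved and reflected. Condition \ref{N2} asks that certain cubes be $\pi$-limit diagrams; here one uses that a map of Cartesian fibrations which is a fiberwise equivalence detects and preserves $\pi$-limit diagrams — a diagram is a $\pi$-limit diagram iff it is sent to a limit diagram in the fiber after applying Cartesian pushforward, and this description is manifestly transported along an equivalence of Cartesian fibrations over $\N(\NN)$. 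Hence $\theta_n$ restricts to an equivalence on the full subcategories, and since it is compatible with the restriction maps along $\NN_{/[n]} \to \NN_{/[m]}$ from \eqref{eq:2over} and with morphisms of chain complexes, we obtain the desired natural weak equivalence $\theta: \Nc \overset{\simeq}{\to} \widetilde{\Nc}$.

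The main obstacle I expect is the second step — specifically, the bookkeeping needed to see that $\pi$-limit diagrams (condition \ref{N2}) are respected by the passage from $\chi$ to $\CChi$. One must be careful that the locally Cartesian fibration $\CChi(\B_{\bullet}) \to \N(\NN)$ really is Cartesian (this uses that $\NN$ is a $1$-category, so every $2$-simplex of $\Nsc(\NN)$ is thin and part (1) of Proposition \ref{prop:laxbasic} upgrades local Cartesianness to Cartesianness), and that the notion of $\pi$-limit cube used in Definition \ref{defi:catnerve} is stable under equivalence of Cartesian fibrations — which is a standard but slightly technical fact about relative limits, cf. \cite[4.3.1]{lurie:htt}. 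Everything else (the marked-model-category input for step one, and naturality) is routine.
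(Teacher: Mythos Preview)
Your proposal is correct and takes essentially the same approach as the paper: the paper's proof is the single line ``This is an immediate consequence of Proposition \ref{prop:laxbasic}\ref{prop:laxbasic:3},'' and what you have written is precisely a careful unpacking of that consequence --- the fiberwise equivalence $\chi(\B_\bullet) \to \CChi(\B_\bullet)$ of Cartesian fibrations over $\N(\NN)$ inducing equivalences on mapping $\infty$-categories that respect conditions \ref{N1} and \ref{N2}. Your elaboration of why $\CChi(\B_\bullet) \to \N(\NN)$ is genuinely Cartesian (all $2$-simplices thin since $\NN$ is a $1$-category) and why $\pi$-limit cubes are preserved is exactly the content the paper leaves implicit.
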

\begin{proof} This is an immediate consequence Proposition
	\ref{prop:laxbasic}\ref{prop:laxbasic:3}.
\end{proof}

\subsubsection{The functor $\Fc$}

To simplify notation, we introduce $\DDelta' = \DDelta^{(-,\op)}$. For $n \ge 0$, we introduce the
pushout
\[
	\M_n = \Nsc(\DDelta'_{/[n]}) \coprod_{\N(\NN_{/[n]})} \Delta^1 \times \N(\NN_{/[n]})
\] 
along the inclusion $\{0\} \times \id: \N(\NN_{/[n]}) \subset \Delta^1 \times
\N(\NN_{/[n]})$. We further denote the inclusions
\[
	\Nsc(\DDelta'_{/[n]}) \overset{r}{\lra} \M_n \overset{s}{\lla} \N(\NN_{/[n]})
\]
where $s = \{1\} \times \id$.
We introduce a functor 
\[
	\Fc: \St_{\DDelta} \lra \St_{\DDelta}
\]
as follows: Given a $2$-simplicial stable $\infty$-category $\A_{\bullet}$, we define
\[
	\Fc(\A_{\bullet})_n \subset \Map_{\Nsc(\DDelta')}(\M_n, \CChi(\A_{\bullet}))
\]
to be the full subcategory spanned by the diagrams
\[
	X:\; \M_n \lra \CChi(\A_{\bullet})
\]
satisfying the following conditions:
\begin{enumerate}[label=(F\arabic*)]
	\item \label{P1} The functor $r^*X: \Nsc(\DDelta'_{/[n]}) \lra \CChi(\A_{\bullet})$ maps edges
		corresponding to strictly commutative triangles to $\pi$-Cartesian edges in $\CChi(\A_{\bullet})$.
	\item \label{P2} The functor $s^*X$ maps every vertex of $\N(\NN_{/[n]})$ corresponding to a degenerate simplex
		$\tau:[k] \to [n]$ to a zero object in the fiber $\pi^{-1}([k])$.
	\item \label{P3} For every nondegenerate simplex $\sigma: [k] \hra [n]$, the composite
		\[
			\Delta^1 \times (\Delta^1)^k \overset{\id \times f^* \sigma}{\lra}
			\Delta^1 \times \N(\NN_{/[n]}) \overset{X}{\lra} \CChi(\A_{\bullet})
		\]
		is a biCartesian cube in the fiber $\pi^{-1}([k])$ where $f$ denotes the cube from
		Definition \ref{defi:cube}.
\end{enumerate}

\begin{prop} Restriction along $r: \N(\DDelta'_{/[n]}) \subset \M_n$ defines a functor 
	\[
		r^*: \Fc \lra \GGamma \circ \CChi 
	\]
	which is a weak equivalence. 
\end{prop}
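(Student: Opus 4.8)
The plan is to show that $r^*\colon \Fc \to \GGamma \circ \CChi$ is, objectwise in $\A_\bullet$ and levelwise in $n$, an equivalence of $\infty$-categories, and that the equivalence is natural. Fix $\A_\bullet \in \St_\DDelta$ and $n \ge 0$. Unwinding the definition of $\M_n$ as a pushout, a diagram $X\colon \M_n \to \CChi(\A_\bullet)$ is the datum of a map $\Nsc(\DDelta'_{/[n]}) \to \CChi(\A_\bullet)$ together with a natural transformation from the restriction of this map along $\N(\NN_{/[n]}) \hookrightarrow \Nsc(\DDelta'_{/[n]})$ to a second map $\N(\NN_{/[n]}) \to \CChi(\A_\bullet)$; the restriction $r^*X$ retains only the first piece. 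The target $\GGamma \circ \CChi(\A_\bullet)_n$ is, by definition of $\GGamma$, the full subcategory of $\Map^\#_{\Nsc(\DDelta')}(\Nsc(\DDelta'_{/[n]}), \CChi(\A_\bullet))$ cut out by condition \ref{P1} alone (mapping strictly commutative triangles to $\pi$-Cartesian edges is exactly the marking condition built into $\Map^\#$). So what must be shown is that, given such an $r^*X$ satisfying \ref{P1}, the space of extensions to a full $X \in \Fc(\A_\bullet)_n$ — i.e. the extra natural transformation to a map $\N(\NN_{/[n]}) \to \CChi(\A_\bullet)$ satisfying \ref{P2} and \ref{P3} — is contractible.

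The key step is to recognize this extension as a \emph{relative Kan extension} problem that admits an essentially unique solution. First I would analyze what \ref{P1} forces: since $r^*X$ carries every strictly commutative triangle to a $\pi$-Cartesian edge, $r^*X$ is determined (up to contractible ambiguity) by its restriction to the "non-degenerate skeleton" — concretely, by the value on the terminal-type objects and by how it propagates along $\pi$-Cartesian transport. The second leg $s^*X\colon \N(\NN_{/[n]}) \to \CChi(\A_\bullet)$ is then \emph{forced}: condition \ref{P2} pins down its values on degenerate simplices (they must be zero objects, which form a contractible space), and condition \ref{P3} says that for each nondegenerate $\sigma\colon [k]\hookrightarrow[n]$ the cube $\Delta^1 \times (\Delta^1)^k \to \pi^{-1}([k])$ is biCartesian. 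But a biCartesian cube with one face ($\{0\}\times(\Delta^1)^k$, which is $r^*X$ restricted to the $f$-cube of $\sigma$, already known to be a $\pi$-limit diagram by \ref{P1} combined with the cube structure) and with the opposite face's "outer corner" prescribed to be a zero object is, by the universal property of (co)limits in the stable $\infty$-category $\pi^{-1}([k])$, uniquely determined together with the connecting transformation. Thus the whole datum of $(s^*X, \text{transformation})$ is a right Kan extension of $r^*X$ along $\N(\NN_{/[n]}) \hookrightarrow \M_n$ relative to $\pi$, existence and uniqueness of which I would deduce from \cite[4.3.2]{lurie:htt} after checking the pointwise criterion cube-by-cube. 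This gives a section of $r^*$ and shows $r^*$ is a trivial fibration onto its image, hence an equivalence; functoriality in $n$ (via \eqref{eq:2over}) and in $\A_\bullet$ is built into the constructions.

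The main obstacle I anticipate is the bookkeeping needed to verify the pointwise relative Kan extension criterion: one must identify, for each object $([m]\to[n])$ of $\NN_{/[n]}$, the relevant slice category over which the limit is taken, and check that within $\CChi(\A_\bullet)$ — whose fibers are the $\A_k$ but whose transition functors involve the lax structure — this limit is computed by exactly the biCartesian cube appearing in \ref{P3}, with the degeneracy-to-zero condition \ref{P2} supplying the correct boundary. The interplay between the \emph{lax} overcategory $\DDelta'_{/[n]}$ (with its $2$-morphisms, whence the scaled nerve $\Nsc$) and the strict poset $\NN_{/[n]}$ is where care is required: one needs that restricting $\Nsc(\DDelta'_{/[n]})$-diagrams satisfying \ref{P1} along $\N(\NN_{/[n]})$ loses no information beyond what the Kan extension recovers, which ultimately rests on the fact that $\NN_{/[n]}$ is cofinal (in the appropriate marked/relative sense) in the part of $\DDelta'_{/[n]}$ not already determined by $\pi$-Cartesian transport. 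I would isolate this cofinality-type statement as the technical heart and reduce everything else to the standard relative Kan extension machinery of \cite{lurie:htt}.
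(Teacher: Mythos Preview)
Your overall strategy—show that $r^*$ has contractible fibers by recognizing the extension problem as a relative Kan extension and then invoke \cite[4.3.2.15]{lurie:htt}—is exactly the paper's. Two points of your execution, however, diverge and should be corrected.

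First, the extension is not a single right Kan extension. The paper works only on the cylinder $\Delta^1 \times \N(\NN_{/[n]})$ and proceeds in two steps: introduce the intermediate full subcategory $K$ spanned by $\{0\}\times\N(\NN_{/[n]})$ together with the vertices of $\{1\}\times\N(\NN_{/[n]})$ indexed by \emph{degenerate} simplices. Then condition \ref{P2} is equivalent to being a $\pi$-\emph{right} Kan extension from $\{0\}\times\N(\NN_{/[n]})$ to $K$ (zero objects are terminal in the fibers), while condition \ref{P3} is equivalent to being a $\pi$-\emph{left} Kan extension from $K$ to the full cylinder: the biCartesian cube in \ref{P3} exhibits the value at a nondegenerate $\sigma$ as the vertex $(1,1,\dots,1)$, hence as a \emph{colimit} of the rest of the cube, not a limit. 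Packaging both steps as one right Kan extension does not match the universal property that \ref{P3} actually encodes.

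Second, the ``cofinality-type statement'' you anticipate as the technical heart is unnecessary. Once the two-step argument shows that restriction from the full subcategory $S$ (diagrams on the cylinder satisfying \ref{P2} and \ref{P3}) to $\Map_{\N(\NN)}(\{0\}\times\N(\NN_{/[n]}),\CChi(\A_\bullet|\NN))$ is a trivial Kan fibration, the paper simply forms the pullback square
\[
\begin{tikzcd}
\Fc(\A_\bullet)_n \arrow{r}\arrow{d} & S \arrow{d} \\
(\GGamma\circ\CChi(\A_\bullet))_n \arrow{r} & \Map_{\N(\NN)}(\{0\}\times\N(\NN_{/[n]}),\CChi(\A_\bullet|\NN))
\end{tikzcd}
\]
and concludes that the left vertical map is a trivial Kan fibration. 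No comparison between $\Nsc(\DDelta'_{/[n]})$ and $\N(\NN_{/[n]})$ beyond the defining pushout of $\M_n$ is needed; condition \ref{P1} plays no role in the Kan extension analysis and is carried along passively via the pullback.
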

\begin{proof}
	Let $\A_{\bullet} \in \St_{\DDelta}$, let $\pi:\CChi(\A_{\bullet}|\NN) \to \N(\NN)$
	denote the lax Grothendieck construction, and let $n \ge 0$. Let $K \subset \Delta^1 \times
	\N(\NN_{/[n]})$ be the full subcategory spanned by the vertices of $\{0\} \times
	\N(\NN_{/[n]})$ and those vertices of $\{1\} \times \N(\NN_{/[n]})$ that correspond to
	degenerate simplices $[k] \to [n]$ of $\Delta^n$. We then have the following statements:
	\begin{enumerate}
		\item \label{it:kan1} For a functor
			\[
				Y: K \lra \CChi(\A_{\bullet}|\NN),
			\]
			over $\N(\NN)$, the following conditions are equivalent:
			\begin{enumerate}
				\item For every degenerate simplex $[k] \to [n]$, the functor $Y$
					maps the corresponding vertex of $\{1\} \times
					\N(\NN_{/[n]})$ to a zero object in the fiber $\pi^{-1}([k])$.
				\item $Y$ is a $\pi$-right Kan extension of its restriction $Y|\{0\}
					\times \N(\NN_{/[n]})$.
			\end{enumerate}
		\item \label{it:kan2} For a functor
			\[
				Z: \Delta^{1} \times \N(\NN_{/[n]}) \to \CChi(\A_{\bullet}|\NN),
			\]
			over $\N(\NN)$, the following conditions are equivalent:
			\begin{enumerate}
				\item For every nondegenerate simplex $\sigma: [k] \hra [n]$, the composite
					\[
						\Delta^1 \times (\Delta^1)^k \overset{\id \times (\sigma \circ f)}{\lra}
						\Delta^1 \times \N(\NN_{/[n]}) \overset{Z}{\lra} \CChi(\A_{\bullet})
					\]
					is a biCartesian cube in the fiber $\pi^{-1}([k])$.
				\item $Z$ is a $\pi$-left Kan extension of its restriction $Z|K$.
			\end{enumerate}
	\end{enumerate}
	Now let $S \subset \Map_{\N(\NN)}(\Delta^{1} \times \N(\NN_{/[n]}),
	\CChi(\A_{\bullet}|\NN))$ denote the full subcategory spanned by those vertices
	that satisfy conditions \ref{P2} and \ref{P3}. Then, by \cite[4.3.1.15]{lurie:htt} and
	\ref{it:kan1}, \ref{it:kan2}, the restriction map
	\[
		S \to \Map_{\N(\NN)}(\Delta^{0} \times \N(\NN_{/[n]}), \CChi(\A_{\bullet}|\NN))
	\]
	is a trivial Kan fibration. Let $M \subset \Map_{\Nsc(\DDelta')}(\N(\DDelta'_{/[n]}),
	\CChi(\A_{\bullet}))$ denote the full subcategory spanned by the vertices that satisfy
	condition \ref{P1}. By definition, we have $M = (\GGamma \circ \CChi(\A_{\bullet}))_n$. 
	Then we have a pullback diagram of simplicial sets
	\[
		\begin{tikzcd}
			\Fc(\A_{\bullet})_n \arrow{r}\arrow{d} & S\arrow{d}\\
			M \arrow{r} & \Map_{\N(\NN)}(\Delta^{0} \times \N(\NN_{/[n]}),\CChi(\A_{\bullet}|\NN))
		\end{tikzcd}
	\]
	so that $\Fc(\A_{\bullet})_n \to (\GGamma \circ \CChi(\A_{\bullet}))_n$ is a trivial Kan
	fibration and hence an equivalence of $\infty$-categories.
\end{proof}

\begin{prop}\label{prop:main}
	Restriction along $s: \N(\NN_{/[n]}) \hra \M_n$ defines a natural transformation
	\[
		s^*: \Fc \lra \widetilde{\Nc} \circ \Cc
	\]
	of endofunctors of $\St_{\DDelta}$ which is a pointwise weak equivalence.
\end{prop}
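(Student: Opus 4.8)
The plan is to analyze both sides of the claimed equivalence $s^*: \Fc(\A_\bullet)_n \to (\widetilde{\Nc}\circ\Cc(\A_\bullet))_n$ simplex by simplex and show it is a trivial Kan fibration, mirroring the way the previous proposition handled $r^*$. First I would unwind the target: by definition $(\widetilde{\Nc}\circ\Cc(\A_\bullet))_n$ is the full subcategory of $\Map_{\N(\NN)}(\N(\NN_{/[n]}), \CChi(\Cc(\A_\bullet)))$ cut out by the conditions \ref{N1} and \ref{N2}, and here the relevant chain complex $\Cc(\A_\bullet)$ has terms $\overline{\A}_k \subset \A_k$, the full subcategory of objects killed by $d_1,\dots,d_k$. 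So a point of the target is a diagram $\N(\NN_{/[n]}) \to \CChi(\A_\bullet)$ landing, over each $[k]$, in the subcategory $\overline{\A}_k$, satisfying the zero conditions on degenerate simplices and the $\pi$-limit conditions on the cubes $q^*\sigma$. On the source side, a point of $\Fc(\A_\bullet)_n$ is a diagram $X: \M_n \to \CChi(\A_\bullet)$ satisfying \ref{P1}, \ref{P2}, \ref{P3}, and $s^* X$ is the restriction to the copy $\{1\}\times\N(\NN_{/[n]})$.

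The key step is to identify how conditions \ref{P1} and \ref{P3} on $X$ translate, after restriction along $s$, into the statement that $s^*X$ lands in $\overline{\A}_\bullet$ and satisfies \ref{N2}. The point is the factorization $q = \{b,f\}$ from Definition \ref{defi:cube}: the cube $q^*\sigma$ of the nerve data decomposes into the "front face" $f^*\sigma$ and the "back face" $b^*\sigma = \partial_0^* f^*\sigma$. Condition \ref{P3} says the $\Delta^1$-direction of $X$ restricted over $f^*\sigma$ is biCartesian; condition \ref{P1} (the $\pi$-Cartesian edges over strictly commutative triangles in $\Nsc(\DDelta'_{/[n]})$, which at the level of the $r$-copy encode the differentials $d_0$ via the $2$-categorical overcategory) pins down the $\{0\}$-end values. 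The plan is to show: (i) the $\{0\}$-end of $X$ over a vertex $\tau$ computes an appropriate value in $\A_k$, and for $\tau$ landing in the image of a face map $\partial_i$ with $i>0$ the biCartesian condition combined with the zero conditions \ref{P2} forces $d_i(s^*X_\sigma) \simeq 0$, so $s^*X$ does land in $\overline{\A}_\bullet$; (ii) conversely, given a diagram in the target one reconstructs the $\{0\}$-half of $X$ by $\pi$-right Kan extension and the biCartesian filling along $\Delta^1$ by $\pi$-left Kan extension, exactly as in the preceding proposition. Formally I would set up a pullback square of simplicial sets: $\Fc(\A_\bullet)_n$ sits over $(\widetilde{\Nc}\circ\Cc(\A_\bullet))_n$ via a square whose right-hand vertical map is shown to be a trivial Kan fibration by invoking \cite[4.3.2.15]{lurie:htt} (relative Kan extension) together with the characterizations of biCartesian cubes and zero objects in terms of such extensions; naturality in $n$ and in $\A_\bullet$ is then immediate from the naturality of all the constructions involved, giving the natural transformation and its pointwise-equivalence property.

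The main obstacle I anticipate is the bookkeeping that matches the combinatorial cube $q$ against the face-map relations so as to verify that \ref{P1}+\ref{P3} really do force $s^*X$ into $\overline{\A}_\bullet$ and conversely — i.e., checking that the $d_i$-vanishing ($1\le i\le k$) defining $\overline{\A}_k$ is \emph{equivalent} to the combination of conditions, not merely implied by it. Concretely, one must see that the "missing" cube faces orthogonal to the non-$\partial_0$ directions get sent to zero precisely because of the degeneracy-vanishing \ref{P2}, which is the categorified analogue of the sign-cancellation argument in Proposition \ref{prop:pi}; the subtlety is that here cancellation is replaced by exactness of cubes, so one needs the $\pi$-limit/$\pi$-colimit formalism to interact correctly with the fibers $\pi^{-1}([k]) \simeq \A_k$ and their full subcategories $\overline{\A}_k$. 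A secondary technical point is ensuring that the zero objects appearing in \ref{P2} (in fibers $\pi^{-1}([k])$ of $\CChi(\A_\bullet)$) and those appearing in \ref{N1} (in $\B_k = \overline{\A}_k$) are compatibly identified under the restriction, which uses that a zero object of $\A_k$ killed by all $d_i$ is a zero object of $\overline{\A}_k$ and vice versa — harmless but worth stating. Once these identifications are in place, the trivial-Kan-fibration conclusion and hence the equivalence follow by the same two-step relative Kan extension argument as in the previous proposition.
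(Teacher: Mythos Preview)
Your part (i)---showing that $s^*X$ lands in $\widetilde{\Nc}\circ\Cc(\A_\bullet)$---is broadly on track and parallels the paper, though the paper's execution is more delicate than you indicate: the vanishing $\partial_i^*(A_\sigma)\simeq 0$ is deduced from a cube-totalization lemma (Lemma~\ref{lem:cube}), and the verification of condition \ref{N2} for $A=s^*X$ goes through an auxiliary claim that the enlarged cube $X|\Delta^1\times q^*\sigma$ is a $\pi$-limit, established by a Kan-extension argument over $\Delta^1\times\N(\Fun([k-1],[n]))$.

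Your part (ii), however, has a genuine gap. You propose to show $s^*$ is a trivial Kan fibration by reconstructing the $\{0\}$-half of $X$ from $s^*X$ via relative Kan extensions, ``exactly as in the preceding proposition.'' But the situation is not symmetric. In the $r^*$ proposition the reconstruction starts from $r^*X$, which lives over $\Nsc(\DDelta'_{/[n]})$ and already encodes a full object of $\A_n$ via \ref{P1}; the extension to $\M_n$ only adds zeros at degenerate vertices and then fills biCartesian cubes---contractible data. Going the other way, $s^*X$ takes values in $\overline{\A}_\bullet$, and recovering the $\{0\}$-half means reconstructing an object of $\A_n$ (satisfying the Cartesian-edge condition \ref{P1} over the full $2$-categorical overcategory $\DDelta'_{/[n]}$) from normalized data. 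That is precisely the nontrivial direction of the correspondence; it is not a single relative Kan extension, and your sketch gives no mechanism for producing it.

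The paper avoids this entirely. After establishing well-definedness, it does \emph{not} attempt to invert $s^*$ directly. Instead it invokes the conservativity of $\Cc$ (Proposition~\ref{prop:catconservative}, proved afterwards using semiorthogonal decompositions via Lemma~\ref{lem:semiglue}) and reduces to checking that $\Cc(s^*)$ is a weak equivalence, which is immediate by inspection since after applying $\Cc$ both source and target collapse to $\Cc(\A_\bullet)$. So the missing idea in your proposal is exactly this conservativity reduction; without it, the reconstruction step you describe would amount to reproving the inductive gluing argument of Proposition~\ref{prop:catconservative} inside this proof.
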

\begin{proof}
	Let $\A_{\bullet} \in \St_{\DDelta}$ and $n \ge 0$. Let $X \in \Fc(\A_{\bullet})_n$ and let
	\[
		A: \N(\NN_{/[n]}) \lra \CChi(\A_{\bullet})
	\]
	be its restriction along $s$. 
	We show the following:
	\begin{enumerate}
		\item\label{main:1} For $0 < i \le k$, let 
			\[
				\partial_i^*: \CChi(\A_{\bullet})_k \to \CChi(\A_{\bullet})_{k-1}
			\]
			denote a functor obtained by lifting the morphism $\partial_i: [k-1] \to [k]$ of
			$\DDelta'$ with respect to the locally Cartesian fibration
			$\CChi(\A_{\bullet}) \to \N(\DDelta')$. Then, for every $\sigma: [k] \to
			[n]$, we have $\partial_i^*(A_{\sigma}) \simeq 0$.
		\item\label{main:2} For every $k \ge 0$ and every nondegenerate $(k+1)$-simplex
			$\sigma: [k+1] \to [n]$, the corresponding cube 
			\[
				A|q^*\sigma: (\Delta^1)^{k+2} \lra \CChi(\A_{\bullet})|\N(\NN),
			\]
			obtained by restricting $A$ to the pullback of $\sigma$ along the 
			cube from Definition \ref{defi:cube}, is a $\pi$-limit diagram with limit
			vertex $(0,0,\dots,0)$.
	\end{enumerate}
	To verify \ref{main:1}, we first note that, by definition, for every degenerate simplex $[k] \to [n]$, the
	corresponding object $A_{\tau}$ is a zero object so that there is nothing to show. The value
	of $A$ at a nondegenerate simplex $\sigma: [k] \hra [n]$ is given by the totalization of the
	cube $X|f^*\sigma$. Due to condition \ref{P1}, this cube has the property that every edge
	parallel to the $i$th coordinate axis of the cube gets mapped under $\partial_i^*$ to an
	equivalence in $\CChi(\A_{\bullet})_{k-1}$. Since totalization commutes with the functor
	$\partial_i^*$, it follows that the totalization of $X|f^*\sigma$ is zero by Lemma
	\ref{lem:cube} showing \ref{main:1}.

	We prove \ref{main:2}. Let $\pi: \CChi(\A_{\bullet})|\N(\NN) \to \N(\NN)$ denote the
	Cartesian fibration obtained by restricting $\CChi(\A_{\bullet})$. 
	We first show the following claim: 
	\begin{enumerate}[label=(\Roman*)]
		\item \label{C} For every nondegenerate simplex $\sigma: [k] \hra
		[n]$, the cube $X|\Delta^1 \times q^*\sigma$ is a $\pi$-limit cube. 
	\end{enumerate}
	Note that \ref{P3}
	implies that the front face $X|\Delta^1 \times f^*\sigma$ of this cube is biCartesian in the
	fiber $\CChi(\A_{\bullet})_{[k]}$ so
	that it suffices to show that the back face $C := X|\Delta^1 \times b^*\sigma$ is biCartesian in
	the fiber $\CChi(\A_{\bullet})_{[k-1]}$. Again by property \ref{P3}, the face $F := X|\Delta^1
	\times f^*(\sigma \circ \partial_0)$ of $C$ is biCartesian in $\CChi(\A_{\bullet})_{[k-1]}$.
	We need to show that the face of $C$ opposite to $F$ is biCartesian as well. To see this, we
	argue as follows: Consider the restriction $R$ of $X$ to the inclusion $\Delta^1 \times
	\N(\Fun([k-1],[n])) \subset \Delta^1 \times \N(\DDelta'_{/[n]})$. Let $K \subset \Delta^1 \times
	\N(\Fun([k-1],[n]))$ denote the nerve of the poset spanned by $\{0\} \times \Fun([k-1],[n])$
	together with all elements of $\{1\} \times \Fun([k-1],[n])$ whose second component is a
	degenerate simplex. Then property \ref{P3}, applied to the simplices $\sigma \circ
	\partial_i$, $i > 0$, implies that $X|R$ is a left Kan extension of $X|K$ where we consider
	both functors with values in the fiber $\CChi(\A_{\bullet})_{[k-1]}$. Now let $K' \subset R$
	denote the nerve of the subposet spanned by all elements except $\{1\} \times 
	\sigma \circ \partial_1$. The fact that $X|R$ is a left Kan extension of $X|K'$ translates via the pointwise
	formula for Kan extensions to the statement that the face of $C$ opposite of $F$ is
	biCartesian, proving the claim \ref{C}. 

	Now \ref{C} implies our desired statement as follows: The cube $A|q^*\sigma$ is the face of the
	larger cube $X|\Delta^1 \times q^*\sigma$ obtained by restriction along $\{1\} \times q^*
	\sigma$. Since this cube is a $\pi$-limit cube, it suffices to show that the cube $X|\{0\}
	\times q^*\sigma$ is a $\pi$-limit. But this is clear, since all edges of the arrow
	$X|\{0\} \times b^*\sigma \to X|\{0\} \times f^*\sigma$ (which comprises the cube $X|\{0\}
	\times q^*\sigma$) are $\pi$-Cartesian. 

	By Proposition \ref{prop:catconservative} below, to show that $s^*$ is a weak equivalence,
	it suffices to verify that $\Cc(s^*)$ is a weak equivalence. This is easily seen by direct
	inspection.
\end{proof}

\begin{lem}\label{lem:cube}
	Let $\A$ be a stable $\infty$-category and let $C: (\Delta^1)^k \to \A$ be a cube in $\A$.
	Let $B$ and $F$ denote the restrictions of the cube $C$ to $\{0\} \times
	(\Delta^1)^{k-1}$ and $\{1\} \times (\Delta^1)^{k-1}$, respectively. Then the following are
	equivalent:
	\begin{enumerate}
		\item $\tot(C)$ and $\tot(B)$ are zero objects in $\A$.
		\item $\tot(C)$ and $\tot(F)$ are zero objects in $\A$.
		\item $\tot(F)$ and $\tot(B)$ are zero objects in $\A$.
	\end{enumerate}
\end{lem}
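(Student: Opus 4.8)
The plan is to reduce everything to a statement about totalizations of cubes fitting into an iterated cofiber/fiber sequence. First I would recall that, for a cube $C\colon(\Delta^1)^k\to\A$, writing it as the edge $B\to F$ from its back face to its front face (both $(k-1)$-cubes), the totalization $\tot(C)$ sits in a cofiber sequence
\[
	\tot(B)\lra\tot(F)\lra\tot(C)
\]
in $\A$; equivalently, $\tot(C)$ is the cofiber of the map of totalizations induced by the edge direction, since totalization is an exact functor and commutes with the formation of cofibers and with restriction to faces. This is the only structural input needed, and it is where I expect the bookkeeping to live: one must be careful that the ``edge direction'' used to split $C$ into $B$ and $F$ is the one along which we are restricting, and that $\tot$ of a $k$-cube is computed as the iterated totalization in any chosen coordinate, so that $\tot(C)=\cof(\tot(B)\to\tot(F))$ holds on the nose up to coherent equivalence. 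Granting this, the lemma is formal.

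Given the cofiber sequence $\tot(B)\to\tot(F)\to\tot(C)$, the three conditions are exactly the three pairwise-vanishing conditions on the three objects of a cofiber sequence in a stable $\infty$-category, and the equivalence of any two of them is immediate: in a stable $\infty$-category, if any two of $\{X,Y,\cof(X\to Y)\}$ are zero then so is the third (the third is an extension, shift, or loop of zero objects), hence any two of the three vanishing statements ``$X\simeq 0$ and $Y\simeq 0$'', ``$X\simeq 0$ and $\cof\simeq 0$'', ``$Y\simeq 0$ and $\cof\simeq 0$'' are equivalent. Concretely: $(1)\Rightarrow$ all three of $\tot(B),\tot(F),\tot(C)$ are zero, since $\tot(F)$ is an extension of $\tot(C)$ by $\tot(B)$; and symmetrically $(2)$ and $(3)$ each force all three objects to vanish, because in the cofiber sequence any two zero terms pin down the third. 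Thus $(1)$, $(2)$, and $(3)$ are each equivalent to the single statement that all of $\tot(B)$, $\tot(F)$, $\tot(C)$ are zero.

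The main obstacle is therefore not the homotopical algebra but the initial reduction: producing the cofiber sequence $\tot(B)\to\tot(F)\to\tot(C)$ functorially and checking that restricting a $k$-cube to the two faces orthogonal to a fixed coordinate, then totalizing, agrees with first totalizing that coordinate. I would handle this by induction on $k$: for $k=1$ a cube is an edge $B\to F$ (with $B,F\in\A$ objects), $\tot$ of an edge is its cofiber, and the sequence is the defining cofiber sequence $B\to F\to\cof(B\to F)=\tot(C)$; for the inductive step, view a $k$-cube as an edge of $(k-1)$-cubes in the last coordinate, apply the exact functor $\tot$ (which exists and is exact by \cite[5.1.2.2]{lurie:htt} together with stability of $\A$) to this edge, and invoke the $k=1$ case together with the identification $\tot=\tot\circ\tot$ across coordinates. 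Once this is in place, the body of the proof is the one-line observation about cofiber sequences above.
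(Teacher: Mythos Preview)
Your proposal is correct and matches the paper's proof exactly: the paper simply asserts the existence of an exact triangle $\tot(B)\to\tot(F)\to\tot(C)$ in $\A$ and concludes immediately. Your write-up is in fact more detailed than the paper's, which does not spell out the inductive construction of the cofiber sequence or the two-out-of-three argument; both are left implicit there.
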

\begin{proof}
	This follows immediately from the fact that there exists an exact triangle
	\[
		\begin{tikzcd}
			\tot(B) \arrow{d}\arrow{r} & \tot(F) \arrow{d}\\
			 0 \arrow{r} & \tot(C)
		\end{tikzcd}
	\]
	in $\A$.
\end{proof}

Collecting all results of this section, we obtain the following main result:

\begin{thm} There is a natural equivalence
	\[
		\id \overset{\simeq}{\lra} \Nc \circ \Cc
	\]
	of endofunctors of the $\infty$-category $\L \St_{\DDelta}$ of $2$-simplicial stable
	$\infty$-categories.
\end{thm}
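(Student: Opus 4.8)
The plan is to assemble the zigzag \eqref{eq:nateq} from the results already established in this section. Recall that we have constructed the four natural transformations
\[
	\id \overset{\eta}{\lra} \GGamma \circ \CChi \overset{\alpha}{\lla} \Fc
	\overset{\beta}{\lra} \widetilde{\Nc} \circ \Cc
	\overset{\theta}{\lla} \Nc \circ \Cc,
\]
of endofunctors of $\St_{\DDelta}$, where $\alpha = r^*$, $\beta = s^*$, $\theta$ is induced by the comparison map \eqref{eq:ordlax} between the ordinary and lax Grothendieck constructions, and $\eta$ is the unit of the $(\infty,2)$-categorical Grothendieck adjunction from Proposition \ref{prop:2groth}. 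First I would record that each of these four maps is a pointwise (hence, after passing to the localization $\L \St_{\DDelta}$, a genuine) weak equivalence: for $\eta$ this is Proposition \ref{prop:2groth} applied to $F = \A_\bullet$; for $\alpha = r^*$ this is the proposition identifying $r^*: \Fc \to \GGamma \circ \CChi$ as a trivial Kan fibration; for $\beta = s^*$ this is Proposition \ref{prop:main}; and for $\theta$ this is the preceding proposition, which in turn rests on Proposition \ref{prop:laxbasic}\ref{prop:laxbasic:3}.

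Next I would pass to the localization. Since all four transformations are levelwise equivalences of stable $\infty$-categories, they descend to equivalences of endofunctors of $\L \St_{\DDelta}$, and a zigzag of natural equivalences composes to a single natural equivalence $\id \overset{\simeq}{\to} \Nc \circ \Cc$ in the functor $\infty$-category $\Fun(\L \St_{\DDelta}, \L \St_{\DDelta})$; concretely one inverts $\alpha$ and $\theta$ and composes $\theta^{-1} \circ \beta \circ \alpha^{-1} \circ \eta$. This produces the asserted natural equivalence. Together with the previously established equivalence $\Cc \circ \Nc \simeq \id$ of endofunctors of $\L \Ch_{\ge 0}(\St)$, this shows that the adjoint pair $(\Cc, \Nc)$ furnishes inverse equivalences, completing the proof of Theorem \ref{thm:catdk}.

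I expect the only real content here to be bookkeeping: confirming that the intermediate functor $\Fc$ and the lax variant $\widetilde{\Nc}$ have been set up so that the diagram \eqref{eq:nateq} literally commutes as a diagram of functors to $\St_{\DDelta}$ (as opposed to commuting only up to coherent homotopy), so that no further coherence data needs to be supplied before localizing. The genuinely hard steps — the weak-equivalence claims for $\beta = s^*$ (which uses the $\pi$-limit/$\pi$-Kan extension analysis of Proposition \ref{prop:main} together with Lemma \ref{lem:cube} and the categorified conservativity of $\Cc$, Proposition \ref{prop:catconservative}) and for $\eta$ (Proposition \ref{prop:2groth}) — have already been carried out, so at this stage there is no substantial obstacle remaining; the theorem is a formal consequence of the section's earlier results.
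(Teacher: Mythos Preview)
Your proposal is correct and follows exactly the same approach as the paper: assemble the zigzag \eqref{eq:nateq} of natural transformations, invoke the propositions of this section to see that each leg is a levelwise weak equivalence, and then localize to obtain the desired equivalence of endofunctors. The paper's own proof is a one-sentence summary of precisely this argument.
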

\begin{proof}
	The various results of this section imply the existence of a diagram of natural weak equivalences
	\[
		\id \lra \GGamma \circ \CChi \lla \Fc \lra \widetilde{\Nc} \circ \Cc \lla \Nc \circ
		\Cc
	\]
	which leads to the desired conclusion after localization.
\end{proof}

\subsubsection{The functor $\Cc$ is conservative}
\label{sec:conservative}

\begin{prop}\label{prop:catconservative} The categorified normalized chains functor
	\[
		\Cc: \St_{\DDelta} \lra \Ch_{\ge 0}(\St)
	\]
	is conservative: a morphism $f$ in $\St_{\DDelta}$ is a weak equivalence if and only if
	$\Cc(f)$ is a weak equivalence.
\end{prop}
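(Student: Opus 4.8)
The plan is to categorify, step by step, the proof of Proposition \ref{prop:conservative}. The key structural tool there is the pair of functors $\P$ (pullback along $[n] \mapsto [n] \ast [0]$, so that $\P(A_\bullet)_n = A_{n+1}$) and its kernel $\Omega$, fitting into a short exact sequence $\Omega(A_\bullet) \to \P(A_\bullet) \to A_\bullet$, together with the compatibility $\C(\Omega(A_\bullet)) \cong \Omega\C(A_\bullet)$. The induction on $n$ then uses that $\overline{f}_n$ and $f_n$ isomorphisms force $\P(f)_n = f_{n+1}$ to be an isomorphism. I would reproduce this scheme in the stable $\infty$-categorical setting, with the key difference that "short exact sequence of simplicial abelian groups" is replaced by a fiber sequence (equivalently, a semiorthogonal-type decomposition) of $2$-simplicial stable $\infty$-categories, where the role of the kernel $\Omega$ is played by an appropriate fiber of the extra face functor $d_{n+1}$.

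Concretely, first I would define $\P: \St_{\DDelta} \to \St_{\DDelta}$ by pullback along the endofunctor $[n] \mapsto [n]\ast[0]$ of $\DDelta$, so $\P(\A_\bullet)_n = \A_{n+1}$, and define $\Omega(\A_\bullet)_n \subset \A_{n+1}$ as the full subcategory on objects $a$ with $d_{n+1}(a)$ a zero object of $\A_n$ — this is the categorified kernel of the extra face functor $d: \P(\A_\bullet) \to \A_\bullet$. I would likewise set $\Omega$ on $\Ch_{\ge 0}(\St)$ to be the shift $\B_1 \overset{d}{\lla} \B_2 \overset{d}{\lla} \cdots$. The combinatorics of the face maps (the same identities used in the corollary after Proposition \ref{prop:pi} at the level of $\DDelta$, now interpreted $2$-functorially) give a natural equivalence $\Cc(\Omega(\A_\bullet)) \simeq \Omega(\Cc(\A_\bullet))$: indeed, the defining conditions $d_i(a) \simeq 0$ for $1 \le i \le n$ on $\overline{\A}_{n+1}$ together with $d_{n+1}(a) \simeq 0$ match exactly the conditions cutting out the relevant subcategory. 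Second, I would establish that $\Omega(\A_\bullet) \to \P(\A_\bullet) \to \A_\bullet$ is a fiber sequence in $\St_{\DDelta}$ levelwise — here one uses the adjunction $d_{n+1} \vdash s_n$ recorded in the Remark of \S\ref{subsec:simpstab}, so that $d: \A_{n+1} \to \A_n$ admits a section and the fiber $\Omega(\A_\bullet)_n$ together with the image of the section assemble a semiorthogonal decomposition of $\A_{n+1}$; this makes $\Omega$ and $\P$ exact and makes a map $f$ a levelwise equivalence iff $\Omega(f)$ and $\P(f)$ are, which in turn (using $\P(\A_\bullet)_n = \A_{n+1}$) sets up the induction.

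Third, I would run the induction on $n$: the base case $n=0$ is $\Nc(\B_\bullet)_0 \cong \B_0$, i.e. $\Cc(\A_\bullet)_0 = \A_0$, so $\Cc(f)$ an equivalence forces $f_0$ to be one. For the inductive step, given $f: \A_\bullet \to \A'_\bullet$ with $\Cc(f)$ an equivalence, the compatibility $\Cc(\Omega(\A_\bullet)) \simeq \Omega(\Cc(\A_\bullet))$ shows $\Cc(\Omega(f))$ is also a levelwise equivalence (it is the shift of $\Cc(f)$), so by the induction hypothesis applied to $\Omega(f)$ we get that $\overline{f}_n := \Omega(f)_n$ is an equivalence for all $n$; combined with $f_n$ being an equivalence in degrees $\le n$ and the semiorthogonal decomposition of $\A_{n+1}$, we conclude $f_{n+1} = \P(f)_n$ is an equivalence, completing the induction. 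Finally, since a morphism in $\St_{\DDelta}$ is a weak equivalence precisely when it is a levelwise equivalence of stable $\infty$-categories, this shows $\Cc$ is conservative.

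The main obstacle I expect is the second step: proving that $\Omega(\A_\bullet) \to \P(\A_\bullet) \to \A_\bullet$ really is a (levelwise, and then $2$-simplicially compatible) fiber sequence of stable $\infty$-categories, and that this lets one detect equivalences. Unlike in the abelian setting, where "kernel plus image" splits a short exact sequence on the nose, here one must carefully use the adjunctions $d_{n+1} \vdash s_n$ packaged in the $2$-functoriality of $\A_\bullet$ to produce the splitting as a semiorthogonal decomposition, verify its naturality in the simplicial direction (so that $\Omega$ is genuinely a functor $\St_{\DDelta} \to \St_{\DDelta}$ and not just a levelwise construction), and check that the decomposition is detected by $\Cc$ and by equivalences in the correct way. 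Getting the $2$-simplicial functoriality of $\Omega$ right — essentially re-running the argument that $\Nc(\B_\bullet)$ is a $2$-simplicial stable $\infty$-category, but for a truncation/shift construction — is where the real work lies; the rest is a faithful transcription of the classical argument.
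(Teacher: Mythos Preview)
Your proposal is correct and follows essentially the same route as the paper: define the path object $\Pc(\A_\bullet)_n = \A_{n+1}$ and the loop object $\Omega(\A_\bullet)_n = \ker(d_{n+1}) \subset \A_{n+1}$, verify $\Cc \circ \Omega = \Omega \circ \Cc$, and run the induction using a semiorthogonal decomposition of $\A_{n+1}$ to conclude that $\Omega(f)_n$ and $f_n$ equivalences force $f_{n+1}$ to be one. One small correction: the relevant adjunction chain is $d_n \dashv s_n \dashv d_{n+1}$ (not $d_{n+1} \dashv s_n$), and the paper's gluing lemma (Lemma~\ref{lem:semiglue}) uses \emph{both} adjunctions---$s_n \dashv d_{n+1}$ to identify $\A_{n+1}$ with counit edges, and $d_n \dashv s_n$ to make the auxiliary fibration coCartesian---so you will need the full triple, which is exactly what the $2$-functoriality supplies.
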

\begin{proof}
	The proof is a step-by-step categorification of the proof of Proposition \ref{prop:conservative}.
	Given a $2$-simplicial stable $\infty$-category $\A_{\bullet}$, we introduce its path object
	$\Pc(\A_{\bullet})$, which is the $2$-simplicial object obtained by pullback along the
	$2$-functor 
	\[
		\DDelta \lra \DDelta, \; [n] \mapsto [n] \ast [0].
	\]
	The values of the path object are given by $\Pc(\A_{\bullet})_n = \A_{n+1}$. The various
	omitted face maps $d_n: \A_n \to \A_{n-1}$ define a natural map of $2$-simplicial stable
	$\infty$-categories $d: \Pc(\A_{\bullet}) \to \A_{\bullet}$. For every $n \ge 0$, we denote
	by $\Omega(\A_{\bullet})_n$ the full subcategory of $\A_{n+1}$ spanned by the objects $X$ such
	that $d_{n+1}(X)$ is a zero object in $\A_{n}$. We obtain a $2$-simplicial stable
	$\infty$-category $\Omega(\A_{\bullet})$ which is part of a sequence
	\[
		\Omega(\A_{\bullet}) \hra \Pc(\A_{\bullet}) \overset{d}{\lra} \A_{\bullet}
	\]
	in $\St_{\DDelta}$, functorial in $\A_{\bullet}$, with composite equivalent to the zero map.

	For a connective chain complex $\B_{\bullet}$ of stable $\infty$-categories, we define
	$\Omega(\B_{\bullet})$ as the shifted chain complex
	\[
		\B_1 \lla \B_2 \lla \cdots
	\]
	omitting $\B_0$. It is immediate from the definitions that we have an equality
	\begin{equation}\label{eq:oc}
			\Cc \circ \Omega = \Omega \circ \Cc
	\end{equation}
	of functors from $\St_{\DDelta}$ to $\Ch_{\ge 0}(\St)$. 

	We now proceed by showing the following statement by induction on $n$:
	\begin{enumerate}[label=(\Roman{*})]
		\item\label{it:I} Let $n \ge 0$. Then, for every map $f: \A_{\bullet} \to \A'_{\bullet}$ of
			$2$-simplicial stable $\infty$-categories, such that $\Cc(f)$ is a weak
			equivalence, the map $f_n: \A_n \to \A'_n$ is an equivalence of stable
			$\infty$-categories.
	\end{enumerate}
	The statement is obvious for $n = 0$, since $\Cc(f)_0 = f_0$. Assume that \ref{it:I} holds
	for a fixed $n \ge 0$. Given a map $\A_{\bullet} \to \A'_{\bullet}$, we consider the
	commutative diagram
	\[
	\begin{tikzcd}
		\Omega(\A_{\bullet}) \arrow{r}\arrow{d}{\Omega(f)} & \Pc(\A_{\bullet})
		\arrow{r}\arrow{d}{\Pc(f)} &\A_{\bullet} \arrow{d}{f}\\ 
		\Omega(\A'_{\bullet}) \arrow{r} & \Pc(\A'_{\bullet}) \arrow{r} &\A'_{\bullet}
	\end{tikzcd}
	\]
	in $\St_{\DDelta}$. Evaluating the diagram at $[n] \in \DDelta$, we obtain the diagram
	\begin{equation}\label{eq:nat1}
			\begin{tikzcd}
				\Omega(\A_{\bullet})_n \arrow{r}\arrow{d}{\Omega(f)_n} & \A_{n+1}
				\arrow{r}{d_{n+1}}\arrow{d}{f_{n+1}} &\A_{n} \arrow{d}{f_n}\\ 
				\Omega(\A'_{\bullet})_n \arrow{r} & \A'_{n+1} \arrow{r}{d_{n+1}} &\A'_{n}
			\end{tikzcd}
	\end{equation}
	of stable $\infty$-categories.
	By induction hypothesis, the functor $f_n$ is an equivalence.
	Further, by \eqref{eq:oc}, we have that $\Cc(\Omega(f)) = \Omega(\Cc(f))$ is a weak equivalence so
	that, again by induction hypothesis, the functor $\Omega(f)_n$ is an equivalences. 
	Note that the right square in \eqref{eq:nat1} can be completed to a commutative diagram
	\begin{equation}\label{eq:semidiag}
			\begin{tikzcd}
				\A_{n+1} \arrow[shift left=1.5ex]{r}{d_n}\arrow[shift
				right=1.5ex,swap]{r}{d_{n+1}}\arrow[swap]{d}{f_{n+1}} &\A_{n}
				\arrow{d}{f_n} \ar["s_{n}" description]{l}\\ 
				\A'_{n+1} \arrow[shift left=1.5ex]{r}{d_n}\arrow[shift right=1.5ex,swap]{r}{d_{n+1}}
				&\A'_{n}\arrow{l}[description]{s_{n}}
			\end{tikzcd}
	\end{equation}
	where $d_n$ and $s_n$ denote the respective face and degeneracy maps and we leave the
	$2$-categorical data implicit.  We deduce that $f_{n+1}$ is an equivalence by Lemma
	\ref{lem:semiglue}. This concludes the proof of \ref{it:I} and the lemma.
\end{proof}

\begin{lem}\label{lem:semiglue}
	Consider a diagram 
	\begin{equation}\label{eq:semimap}
		\begin{tikzcd}
			\X \arrow[shift left=1.5ex]{r}{q}\arrow[shift right=1.5ex,swap]{r}{p}\arrow[swap]{d}{f} & \A
			\arrow{d}{g} \ar["s" description]{l}\\ 
			\X' \arrow[shift left=1.5ex]{r}{q'}\arrow[shift right=1.5ex,swap]{r}{p'}
			&\A' \arrow{l}[description]{s'}
		\end{tikzcd}
	\end{equation}
	of stable $\infty$-categories with $sp = sq = \id_{\A}$, $s'p' = s'q' = \id_{\A'}$ so that
	these identities are counits and units, respectively, of adjunctions
	\[
		p \dashv s \dashv q
	\]
	and 
	\[
		p' \dashv s' \dashv q'.
	\]
	Set $\B = \ker(q)$ and $\B' = \ker(q')$. Suppose that the induced functors $f: \A \to \A'$
	and $\overline{g}: \B \to \B'$ are equivalences. Then the functor $g$ is an
	equivalence.
\end{lem}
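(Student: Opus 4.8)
\noindent
The plan is to realize $\X$ --- and, verbatim, $\X'$ --- as being glued from $\A$ and $\B$ by a pair of semiorthogonal decompositions that $f$ respects, and then to invoke that a functor compatible with such decompositions and inducing equivalences on the pieces is itself an equivalence.

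First I would extract the structure implied by the adjunctions $p \dashv s \dashv q$. Since the isomorphisms $ps \simeq \id_{\A}$ and $qs \simeq \id_{\A}$ are the counit of $p \dashv s$ and the unit of $s \dashv q$, the functor $s$ is fully faithful, and every $X \in \X$ carries two functorial exact triangles: the colocalization $s \dashv q$ yields
\[
	sq(X) \lra X \lra c(X), \qquad c(X) := \on{cofib}(sq(X) \to X) \in \ker(q) = \B,
\]
and the localization $p \dashv s$ yields
\[
	b(X) \lra X \lra sp(X), \qquad b(X) := \fib(X \to sp(X)) \in \ker(p).
\]
The adjunctions also give at once $\Map_{\X}(s(a),Z) \simeq \Map_{\A}(a, q(Z)) \simeq 0$ for $Z \in \ker(q)$, $\Map_{\X}(Z', s(a)) \simeq \Map_{\A}(p(Z'), a) \simeq 0$ for $Z' \in \ker(p)$, and $\Map_{\X}(s(a), s(a')) \simeq \Map_{\A}(a,a')$. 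Thus $\X = \langle s\A, \B \rangle$ and $\X = \langle \ker(p), s\A \rangle$ are semiorthogonal decompositions with $s\A \simeq \A$, and likewise for $\X'$. Since $g$ is exact and the squares in \eqref{eq:semimap} commute, $f$ carries $s\A$ into $s'\A'$ (via $fs \simeq s'g$), $\ker(q)$ into $\ker(q')$, and $\ker(p)$ into $\ker(p')$; by uniqueness of semiorthogonal decompositions, $f$ therefore sends each decomposition triangle in $\X$ to the corresponding one in $\X'$. Moreover the functor $s\A \to s'\A'$ induced by $f$ is identified with $g$ (using that $s,s'$ are fully faithful and $fs \simeq s'g$), and the functor $\B \to \B'$ induced by $f$ is the given $\bar g$; both are equivalences by hypothesis.

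For full faithfulness I would compute $\Map_{\X}(X,Y)$ by resolving both variables through the triangle $sq(-) \to (-) \to c(-)$, which builds $\Map_{\X}(X,Y)$ by iterated fiber sequences out of the four mapping spaces $\Map_{\X}(sqX, sqY)$, $\Map_{\X}(sqX, cY)$, $\Map_{\X}(cX, sqY)$, $\Map_{\X}(cX, cY)$, with $f$ inducing a compatible map of all these fiber sequences. Three of the four terms are immediate: $\Map_{\X}(sqX,sqY) \simeq \Map_{\A}(qX,qY)$ compares to $\Map_{\X'}(s'q'fX, s'q'fY) \simeq \Map_{\A'}(q'fX, q'fY)$ via $g$ on mapping spaces, an equivalence; $\Map_{\X}(sqX, cY) \simeq 0 \simeq \Map_{\X'}(s'q'fX, c'fY)$; and $\Map_{\X}(cX, cY) \simeq \Map_{\B}(cX, cY)$ compares to $\Map_{\B'}$ via $\bar g$, an equivalence. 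The remaining term $\Map_{\X}(cX, sqY)$, with $cX \in \ker(q)$, is exactly where the first decomposition does not suffice; here I would resolve $cX$ through the triangle $b(cX) \to cX \to sp(cX)$ of the \emph{second} decomposition. Since $b(cX) \in \ker(p)$ and $sqY \in s\A$ we get $\Map_{\X}(b(cX), sqY) \simeq 0$, hence $\Map_{\X}(cX, sqY) \simeq \Map_{\X}(sp(cX), sqY) \simeq \Map_{\A}(p(cX), qY)$, and the $f$-comparison is again $g$ on mapping spaces. Assembling the four terms shows $f$ is fully faithful. For essential surjectivity, given $X' \in \X'$ I would use its triangle $s'q'(X') \to X' \to c'(X')$: essential surjectivity of $g$ gives $a \in \A$ with $q'X' \simeq g(a)$, so $s'q'X' \simeq f(s(a))$, and essential surjectivity of $\bar g$ gives $b \in \B$ with $c'X' \simeq f(b)$. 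Full faithfulness of $f$ lifts the connecting map $c'X' \to s'q'X'[1]$ to a map $b \to s(a)[1]$ in $\X$; setting $X := \fib(b \to s(a)[1])$ one obtains an exact triangle $s(a) \to X \to b$ in $\X$ which $f$ sends to the triangle of $X'$, so $f(X) \simeq X'$. Hence $f$ is an equivalence.

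The point I expect to cost the most care is this fourth mapping-space term: in contrast to $\Map_{\X}(s\A, \B)$, the cross-term $\Map_{\X}(\B, s\A)$ does not vanish, so a single semiorthogonal decomposition is not enough --- one must bring in the complementary decomposition furnished by the adjunction $p \dashv s$. Apart from this, the argument is a routine manipulation of fiber sequences once the elementary properties of the two semiorthogonal decompositions, and the identification of the induced functors with $g$ and $\bar g$, are in place.
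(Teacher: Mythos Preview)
Your argument is correct, but it proceeds differently from the paper's. The paper does not compute mapping spaces term by term; instead it reconstructs $\X$ from the gluing data. Concretely, it identifies $\X$ with an $\infty$-category $\X(\B,\A)$ of certain two-square diagrams, projects to the full subcategory $\Map_{\X}(\B,\A) \subset \Fun(\Delta^1,\X)$ of edges from $\B$ to $s\A$, and then realizes the latter as the $\infty$-category of sections of an explicit fibration $\theta:\Y \to \Delta^1$ with fibers $\B$ and $\A$. The second adjunction $p \dashv s$ enters precisely to show that $\theta$ is \emph{coCartesian}; once this is known, the induced map $\Y \to \Y'$ is a fiberwise equivalence of coCartesian fibrations (by the hypotheses on $g$ and $\overline g$), hence an equivalence by \cite[3.3.1.5]{lurie:htt}, and one concludes for $f$ by two-out-of-three across the chain $\X \simeq \X(\B,\A) \simeq \Map_{\X}(\B,\A) \simeq \Map^{\#}_{\Delta^1}((\Delta^1)^{\flat},\Y)$.

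Your route is more elementary and self-contained: you stay inside $\X$, split $\Map_{\X}(X,Y)$ into four pieces via the first semiorthogonal decomposition, and invoke the second one exactly once to reduce the nonvanishing cross-term $\Map_{\X}(\B, s\A)$ to a mapping space in $\A$. This avoids the relative nerve and the citation to \cite{lurie:htt}. The paper's packaging, on the other hand, yields as a byproduct the conceptual statement (recorded in the remark following the proof) that $\X$ is recovered from $\A$, $\B$, and the gluing functor $\overline p:\B \to \A$ --- something your direct computation does not make explicit. Both arguments use the extra adjunction $p \dashv s$ at the same essential point, just phrased differently: for you it kills $\Map_{\X}(\ker(p), s\A)$, for the paper it supplies coCartesian lifts in $\Y$.
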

\begin{proof}
	Consider the relative nerve $\pi: \N_s(\Delta^1) \lra \Delta^1$ of the functor $\Delta^1 \to
	\Cat_{\infty}$ determined by $s: \A \to \X$ (cf. \cite[3.2.5.12]{lurie:htt}). Since $s$ has
	the right adjoint $q$, the coCartesian fibration $\pi$ is Cartesian as well so that we have
	an equivalence $\X \simeq \Map^{\#}_{\Delta^1}(\Delta^1, \N_s(\Delta^1))$ with the
	$\infty$-category of Cartesian sections of $\N_s(\Delta^1)$. The latter $\infty$-category
	can be identified with the full subcategory of $\Fun(\Delta^1, \X)$ spanned by the counit
	edges, i.e., edges equivalent to $s(q(X)) \to X$. Here, an edge $e$ is a counit edge if and
	only if $q(e)$ is an equivalence in $\A$. But this is in turn equivalent to the statement
	that the cofiber of $e$ lies in $\B = \ker(q)$.  Consider the $\infty$-category $\X(\B,\A)$
	of diagrams 
	\[
		\begin{tikzcd} 
			A \arrow{r}\arrow{d} & X \arrow{r}\arrow{d} & 0\arrow{d}\\
			0 \arrow{r} & B \arrow{r} & A'
		\end{tikzcd}
	\]
	in $\X$ where $A \in \A$, $B \in \B$, and both squares are biCartesian (which implies 
	$A' \in \A$). The above discussion implies that the evaluation map at $X$ establishes an
	equivalence of $\infty$-categories $\X(\B,\A) \simeq \X$. Let $\Map_{\X}(\B,\A) \subset
	\Fun(\Delta^1, \X)$ denote the full subcategory spanned by those edges $e$ in $\X$ so that
	$d_1(e)$ is a vertex in $\B$ and $d_0(e)$ is a vertex in $\A$. Clearly, projection onto the
	bottom right edge provides an equivalence
	\[
		\X(\B,\A) \overset{\simeq}{\to} \Map_{\X}(\B,\A).
	\]

	We now construct an $\infty$-category $\Y$ equipped with a map $\theta: \Y \to \Delta^1$ as
	follows: an $n$-simplex in $\Y$ consists of 
	\begin{enumerate}[label=(\roman{*})]
		\item a map $f: [n] \to [1]$ in $\Delta$, 
		\item an $n$-simplex $\sigma: \Delta^n \to \X$ such that $\sigma|\Delta^{f^{-1}(0)}
			\subset \B$ and $\sigma|\Delta^{f^{-1}(1)} \subset \A$.
	\end{enumerate}
	Note that $\Map_{\X}(\B,\A)$ can be identified with the $\infty$-category of sections of
	$\theta$. The assumption that $p$ is a left adjoint to $s$ implies that the map $\theta$ is a
	coCartesian fibration where a section $e$ is a coCartesian edge if and only if $p(e)$ is an
	equivalence in $\A$.

	We have thus produced a diagram of equivalences of $\infty$-categories
	\[
		\begin{tikzcd}
			\X & \arrow{l} \X(\B,\A) \arrow{r} & \Map_{\X}(\B,\A) \arrow{r} & 
			\Map^{\#}_{\Delta^1}((\Delta^1)^{\flat}, \Y)
		\end{tikzcd}
	\]
	The diagram \eqref{eq:semimap} induces a map $t: \Y \to \Y'$ that preserves coCartesian edges
	and is a fiberwise equivalence. By \cite[3.3.1.5]{lurie:htt}, it follows that $t$ itself is
	an equivalence.  We conclude by noting the commutative diagram
	\[
		\begin{tikzcd}
			\X\arrow{d}{f}  & \arrow{l} \X(\B,\A) \arrow{r}\arrow{d}  & \Map_{\X}(\B,\A) \arrow{r}\arrow{d}  & 
			\Map^{\#}_{\Delta^1}((\Delta^1)^{\flat}, \Y) \arrow{d}{t} \\
			\X' & \arrow{l} \X'(\B',\A') \arrow{r} & \Map_{\X'}(\B',\A') \arrow{r} & 
			\Map^{\#}_{\Delta^1}((\Delta^1)^{\flat}, \Y').
		\end{tikzcd}
	\]
	where all horizontal arrows are equivalences and, since $t$ is an equivalence, the rightmost
	vertical map is an equivalence. By the two-out-of-three property the leftmost arrow $f$ is an equivalence as well.  
\end{proof}

\begin{rem} The equivalence 
	\begin{equation}\label{eq:semirec}
			\X \simeq \Map^{\#}_{\Delta^1}((\Delta^1)^{\flat}, \Y)
	\end{equation}
	appearing in the proof of Lemma \ref{lem:semiglue} admits the following interpretation: the
	stable $\infty$-category $\X$ comes equipped with a {\em semiorthogonal decomposition} $\X =
	\langle \B, \A \rangle$ satisfying a certain admissibility condition. In this situation, the
	equivalence \eqref{eq:semirec} shows that the $\infty$-category $\X$ can be recovered from
	the two components $\B$ and $\A$ of the decomposition together with the {\em gluing functor}
	$\overline{p}: \B \to \A$, using the terminology of \cite{bk:semi}.
\end{rem}

\bibliographystyle{alpha} 
\bibliography{refs} 

\begin{thebibliography}{{Lur}09b}

\bibitem[BK89]{bk:semi}
A.~Bondal and M.~Kapranov.
\newblock {Representable functors, Serre functors, and mutations}.
\newblock {\em Izvestiya Rossiiskoi Akademii Nauk. Seriya Matematicheskaya},
  53(6):1183--1205, 1989.

\bibitem[DJ17]{dj:sk}
T.~Dyckerhoff and G.~Jasso.
\newblock {Higher-dimensional S-constructions and higher-dimensional
  Auslander-Reiten theory}.
\newblock {\em in preparation}, 2017.

\bibitem[DK17]{dk:triangulated}
T.~Dyckerhoff and M.~Kapranov.
\newblock Triangulated surfaces in triangulated categories.
\newblock {\em to appear in JEMS}, 2017.

\bibitem[DKSS17]{dkss:schober}
T.~Dyckerhoff, M.~Kapranov, V.~Schechtman, and Y.~Soibelman.
\newblock {Topological Fukaya categories with coefficients}.
\newblock {\em in preparation}, 2017.

\bibitem[Dol58]{dold:homology}
A.~Dold.
\newblock {Homology of symmetric products and other functors of complexes}.
\newblock {\em Annals of Mathematics}, pages 54--80, 1958.

\bibitem[Dyc17a]{d:grothendieck}
T.~Dyckerhoff.
\newblock {A lax Grothendieck construction}.
\newblock {\em in preparation}, 2017.

\bibitem[Dyc17b]{d:a1homotopy}
T.~Dyckerhoff.
\newblock {{$\Bbb{A}^1$} -homotopy invariants of topological {F}ukaya
  categories of surfaces}.
\newblock {\em Compos. Math.}, 153(8):1673--1705, 2017.

\bibitem[HM13]{hesselholt-madsen}
L.~Hesselholt and I.~Madsen.
\newblock Real algebraic {K}-theory.
\newblock {\em to appear}, 2013.

\bibitem[Iya07]{iyama:higher}
O.~Iyama.
\newblock {Higher-dimensional Auslander--Reiten theory on maximal orthogonal
  subcategories}.
\newblock {\em Advances in Mathematics}, 210(1):22--50, 2007.

\bibitem[Kan58]{kan:functors}
Daniel~M Kan.
\newblock {Functors involving css complexes}.
\newblock {\em Transactions of the American Mathematical Society},
  87(2):330--346, 1958.

\bibitem[KS14]{ks:schobers}
M.~Kapranov and V.~Schechtman.
\newblock {Perverse schobers}.
\newblock {\em arXiv preprint arXiv:1411.2772}, 2014.

\bibitem[Lur09a]{lurie:htt}
J.~Lurie.
\newblock {\em Higher topos theory}, volume 170 of {\em Annals of Mathematics
  Studies}.
\newblock Princeton University Press, Princeton, NJ, 2009.

\bibitem[{Lur}09b]{lurie:2cat}
J.~{Lurie}.
\newblock {(Infinity,2)-Categories and the Goodwillie Calculus I}.
\newblock {\em ArXiv e-prints}, May 2009.

\bibitem[{Lur}11]{lurie:ha}
J.~{Lurie}.
\newblock {Higher Algebra}.
\newblock {\em preprint}, May 2011.

\bibitem[Lur14]{lurie:ktheory}
J.~Lurie.
\newblock {Algebraic K-Theory and Manifold Topology}.
\newblock {\em Lecture notes available the author's website}, 2014.

\bibitem[Pog17]{poguntke:higher}
T.~Poguntke.
\newblock {Higher Segal structures in algebraic $ K $-theory}.
\newblock {\em arXiv preprint arXiv:1709.06510}, 2017.

\bibitem[Wal85]{waldhausen}
F.~Waldhausen.
\newblock Algebraic {$K$}-theory of spaces.
\newblock In {\em Algebraic and geometric topology ({N}ew {B}runswick,
  {N}.{J}., 1983)}, volume 1126 of {\em Lecture Notes in Math.}, pages
  318--419. Springer, Berlin, 1985.

\end{thebibliography}

\end{document}